\def\MR#1{\href{http://www.ams.org/mathscinet-getitem?mr=#1}{MR#1}}
\def\arXiv#1{arXiv:\href{http://arXiv.org/abs/#1}{#1}}
\newtheorem{theorem}{Theorem}[section]
\newtheorem{prop}[theorem]{Proposition}
\newtheorem{lemma}[theorem]{Lemma}
\theoremstyle{definition}
\newtheorem{definition}[theorem]{Definition}
\newtheorem{example}[theorem]{Example}
\theoremstyle{remark}
\newtheorem{remark}[theorem]{Remark}
\newtheorem*{remark*}{Remark}
\numberwithin{equation}{section}
\newcommand{\wt}{\widetilde}
\newcommand{\cC}{\mathcal{C}}
\newcommand{\abs}[1]{\left\lvert#1\right\rvert}
\newcommand{\snorm}[1]{\lVert#1\rVert}
\newcommand{\norm}[1]{\left\lVert#1\right\rVert}
\newcommand\sign{\mathop{\textup{sign}}}
\newcommand\MaxCut{\mathop{\textup{MaxCut}}}
\newcommand\eps{\varepsilon}
\newcommand\Haus{\textup{Hf}}
\newcommand\iv{\widehat}
\newcommand\E{\mathop{\mathsf{E}}}
\newcommand\Ent{\mathop{\mathsf{Ent}}\nolimits}
\newcommand\PD{\triangle}
\newcommand\FP{\mathop{\mathsf{FP}}}
\newcommand\dcut{d_{\square}}
\newcommand\delcutn{\delta_{\square,\textup{norm}}}
\newcommand{\one}{\mathbf{1}}
\newcommand\R{{\mathbb R}}
\newcommand\N{{\mathbb N}}
\newcommand\ba{\mathbf{a}}
\newcommand\bb{\mathbf{b}}
\newcommand\bq{\mathbf{q}}
\newcommand\CC{\mathcal{C}}
\newcommand\EE{\mathcal{E}}
\newcommand\FF{\mathcal{F}}
\newcommand\PP{\mathcal{P}}
\newcommand\cP{\mathcal{P}}
\renewcommand\SS{\mathcal{S}}
\title[An $L^p$ theory of sparse graph convergence II]{An $L^p$ theory of sparse graph convergence II:\\
LD convergence, quotients, and\\ 
right convergence}
\author{Christian Borgs}
\address{Microsoft Research\\
One Memorial Drive\\
Cambridge, MA 02142} \email{borgs@microsoft.com}
\author{Jennifer T.\ Chayes}
\address{Microsoft Research\\
One Memorial Drive\\
Cambridge, MA 02142} \email{jchayes@microsoft.com}
\author{Henry Cohn}
\address{Microsoft Research\\
One Memorial Drive\\
Cambridge, MA 02142} \email{cohn@microsoft.com}
\author{Yufei Zhao}
\address{Department of Mathematics\\
Massachusetts Institute of Technology\\
Cambridge, MA 02139} \email{yufeiz@mit.edu}
\thanks{Zhao was supported by a
Microsoft Research PhD Fellowship and internships at Microsoft Research New England.}
\begin{document}

\begin{abstract}
We extend the $L^p$ theory of sparse graph limits, which was introduced in
a companion paper, by analyzing different notions of convergence. Under
suitable restrictions on node weights, we prove the equivalence of metric
convergence, quotient convergence, microcanonical ground state energy
convergence, microcanonical free energy convergence, and large deviation
convergence.  Our theorems extend the broad applicability of dense graph
convergence to all sparse graphs with unbounded average degree, while the
proofs require new techniques based on uniform upper regularity.
Examples to which our theory applies include stochastic block models, power
law graphs, and sparse versions of $W$-random graphs.
\end{abstract}

\setcounter{tocdepth}{1}
\maketitle

\tableofcontents

\section{Introduction}

In the companion paper \cite{part1}, we developed a theory of graph
convergence for sequences of sparse graphs whose average degrees tend to
infinity. These results fill a major gap in the theory of convergent graph
sequences, which dealt primarily with either bounded degree graphs or dense
graphs. While progress in this direction was made by Bollob\'as and Riordan
in \cite{BR}, their approach required a ``bounded density'' condition that
excludes many graphs of interest. For example, it cannot handle graphs with
heavy-tailed degree distributions such as power laws. To accommodate these
and other graphs excluded by the bounded density condition, we generalized
the Bollob\'as-Riordan approach in \cite{part1} to graphs obeying a condition
we called $L^p$ upper regularity. We then showed that when $p>1$, every
sequence of $L^p$ upper regular graphs contains a subsequence converging to a
symmetric, measurable function $W \colon [0,1]^2\to \R$ that is in
$L^p([0,1]^2)$. Such a function is an \emph{$L^p$ graphon}. Conversely, only
$L^p$ upper regular sequences can converge to $L^p$ graphons, and so our
results characterize these limits. The work of Bollob\'as and Riordan in
\cite{BR} and the prior work on dense graph sequences amount to the special
case $p=\infty$, while $L^p$ graphons with $p < \infty$ describe limiting
behaviors that occur only in the sparse setting. Thus, the $L^p$ theory of
graphons completes the previous $L^\infty$ theory to provide a rich setting
for limits of sparse graph sequences with unbounded average degree.

One attractive feature of dense graph limits is that many definitions of
convergence coincide, and it is natural to ask whether the same is true for
sparse graphs. After all, there are many ways to formulate the idea that two
graphs are similar. For example, one could base convergence on subgraph
counts or quotients. Furthermore, statistical physics provides many numerical
measures for similarity, such as ground state energies or free energies.

Let us first address the question of subgraph counts. For dense graphs, the
sequence $(G_n)_{n \ge 0}$ converges under the cut metric if and only if the
$F$-density in $G_n$ converges for all graphs $F$, where the $F$-density is
the probability that a random map from $F$ to $G_n$ is a homomorphism
\cite{dense1}. One might guess that suitably normalized $F$-densities would
characterize sparse graph convergence as well, but this fails dramatically:
for sparse graphs, cut metric convergence does not determine subgraph
densities (see Section~2.9 of \cite{part1}). This is not merely a
technicality, but rather a fundamental fact about sparse graphs. We must
therefore give up on convergence of subgraph counts as a criterion for sparse
graph convergence.

By contrast, we show in this paper that several other widely studied forms of
convergence are indeed equivalent to cut metric convergence in the sparse
setting. Thus, with the exception of subgraph counts, the scope and
consequences of sparse graph convergence are comparable with those of dense
graph convergence.

We will consider several notions of convergence motivated by statistical
physics and the theory of graphical models from machine learning, such as
convergence of ground state energies and free energies, as well as
convergence of quotients,\footnote{Quotient convergence is also called
partition convergence in some of the literature.} which encode ``global''
graph properties of interest to computer scientists, such as max-cut and
min-bisection.  We will also analyze the notion of large deviation (LD)
convergence, which was recently introduced for graph sequences with bounded
degrees \cite{BCG} and can easily be adapted to our more general context. For
bounded degree graphs, LD convergence was strictly stronger than convergence
of quotients or other notions introduced before, but we will see that in our
setting it is equivalent to these other forms of convergence.

All these question can be studied for $L^p$ upper regular sequences of sparse
graphs, but they can also be studied directly for $L^p$ graphons. While the
former might be more interesting from the point of view of applications, the
latter turns out to be more elegant from an abstract point of view. We
therefore first develop the theory for sequences of graphons, and then prove
our results for sparse graph sequences.

We begin in Section~\ref{sec:results} with motivation, definitions, and
precise statements of our results, with some ancillary results stated in
Section~\ref{sec:remarks}.  We begin the proofs in Section~\ref{sec:Gproofs}
by completing the cases that do not require the notion of upper regularity.
We then make use of upper regularity to deal with graphons in
Section~\ref{sec:W-sequences} and graphs in
Section~\ref{sec:graphs->graphons}. Finally, in
Section~\ref{sec:inferring-upper-regularity}, we show that any sequence whose
quotients, microcanonical free energies, or ground state energies converge to
those of a graphon must be upper regular, which completes the proofs.

Before turning to these details, though, we will explain the motivations
behind the different types of convergence analyzed in this paper.

\subsection{Motivation}

When formulating a notion of convergence for growing sequences of graphs, one
is immediately faced with the problem of deciding when to consider two large
graphs on different numbers of vertices to be similar.

One natural approach is to compare summary statistics, such as weighted
counts of homomorphisms to or from small graphs.  Convergence based on these
statistics is called \emph{left convergence} if it uses homomorphisms
\emph{from} small graphs and \emph{right convergence} if it uses
homomorphisms \emph{to} small graphs.  Left convergence amounts to using
subgraph counts, and as discussed in the previous section it is not a useful
tool for characterizing sparse graph convergence.  By contrast, right
convergence is far more useful in the sparse setting.  It amounts to using
statistical physics models, and it encompasses quantities such as max-cut,
min-bisection, etc.\ that are important in combinatorial optimization.

The advantage of using summary statistics is that they can easily be
normalized to compare graphs on different numbers of nodes.  For a more
direct approach, one must find other ways to compare such graphs.

One way to deal with this is to blow up both graphs to obtain two new graphs
on a common, much larger set of vertices. Conceptually, the most elegant way
to do this is probably an infinite blow-up, replacing the vertex sets of both
graphs by the interval $[0,1]$ and the adjacency matrices by appropriate step
functions on $[0,1]^2$. Comparing the two graphs then reduces to comparing
two functions on $[0,1]^2$, leading to the notion of convergence in the cut
norm. A priori, this has the problem that relabeling the nodes of a graph
would change its representation as a function on $[0,1]^2$, but this can be
cured by defining the distance as the cut distance of ``aligned'' step
functions, where alignments are formalized as measure preserving
transformations from $[0,1] \to [0,1]$, chosen in such a way that the
resulting two functions are as close to each other as possible.  The
resulting definition is known as \emph{cut metric convergence}, and it was
analyzed for sparse graphs in \cite{part1}.

Another way to deal with the different vertex sets is to ``squint your eyes''
and look at whether the results are similar. More formally, one divides the
vertex sets of both graphs into $q$ blocks, and then averages the adjacency
matrices over the respective blocks, leading to two $q\times q$ matrices
representing the edge densities between various blocks (we call these
matrices $q$-quotients). One might want to call two graphs similar if their
$q$-quotients are close, but we are again faced with an alignment problem,
now of a slightly different kind: different ways of dividing the vertex set
of a graph into blocks produce different quotients. While some of the
quotients of a graph contain useful information about the graph (for example
those corresponding to Szemer\'edi partitions), others might not.
Unfortunately, it is not a priori clear which of the $q$-quotients of a graph
represent its properties well and which do not. We solve this problem by
defining two graphs to be similar if the \emph{sets} of their $q$-quotients
are close, measured in the Hausdorff distance between subsets of the metric
space of weighted graphs on $q$ nodes.

The four notions of convergence describe informally above, namely left
convergence, right convergence, convergence in metric, and convergence of
quotients, were already introduced in \cite{dense1,dense2} in the context of
sequences of dense graphs. But we felt it to be useful to review the
motivation behind these notions, before addressing the extra complications
stemming from the fact that we want to analyze sparse graphs.

In this paper we also discuss a fifth notion of convergence: large deviation
convergence (LD convergence), which was recently introduced \cite{BCG} to
discuss convergence of bounded degree graphs.  Roughly speaking, LD
convergence keeps track of not just the possible quotients of a graph but
also how often they occur.

Figure~\ref{fig:convergence} illustrates the implications among these
concepts.  In the upper half of the figure, we see that LD convergence is the
strongest notion and ground state energy convergence is the weakest.  To
complete the cycle and prove that they are all equivalent to metric
convergence, we require one hypothesis, namely uniform upper regularity. This
notion first arose in \cite{part1}, and we review its definition below;
intuitively, it ensures that subsequential limits are graphons rather than
more subtle objects.  Indeed, it is possible to state our results using just
the fact that limits can be expressed in terms of graphons, without
explicitly referring to upper regularity.  We will chose this approach when
stating our results in Theorem~\ref{thm:limit-expressions}.

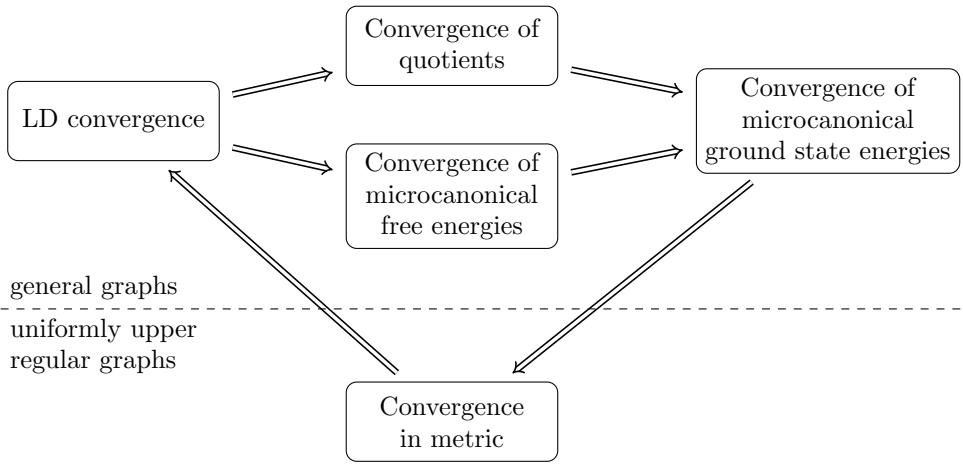
\begin{figure} \centering
\begin{tikzpicture}
  [n/.style={draw,rounded corners,align=center,minimum
  height=3em,minimum width=8em},
imp/.style={-implies,semithick,double equal sign distance,shorten
  >=.5em, shorten <=.5em}]
  \node[n] (LD) at (-4.5,1)
  {LD convergence};
  \node[n] (F) at (0,0)
  {Convergence of \\ microcanonical \\ free energies};
  \node[n] (Q) at (0,2)
  {Convergence of \\ quotients};
  \node[n] (E) at (5,1)
  {Convergence of \\ microcanonical \\ ground state energies};
  \node[n] (D) at (0,-3)
  {Convergence \\ in metric};
  \draw[imp] (LD)--(Q);
  \draw[imp] (LD)--(F);
  \draw[imp] (Q)--(E);
  \draw[imp] (F)--(E);
  \draw[imp] (E)--(D);
  \draw[imp] (D)--(LD);
    \node[anchor=south west,align=left] at (-6,-1.5) {general graphs};
  \draw[dashed] (-6,-1.5) -- (6.8,-1.5);
  \node[anchor=north west,align=left] at (-6,-1.5) {uniformly upper \\
    regular graphs};
\end{tikzpicture}
\caption{Implications between different notions of sparse graph convergence.}
\label{fig:convergence}
\end{figure}

\section{Definitions and main results}
\label{sec:results}

\subsection{Notation}
We begin with some notation. As usual, a weighted graph $G=(V,\alpha,\beta)$
consists of a set $V=V(G)$ of vertices, vertex weights $\alpha_x\geq 0$ for
$x\in V$, and edge weights $\beta_{xy}=\beta_{yx}\in\R$ for $x,y\in V$. We
use $E(G)$ to denote the set of edges of $G$, i.e., the set of pairs
$\{x,y\}$ such that $\beta_{xy}\neq 0$. If we consider several graphs at the
same time, then we make the dependence on $G$ explicit, denoting the edge
weights by $\beta_{xy}(G)$ and the vertex weights by $\alpha_x(G)$. The
maximal node weight of $G$ will be denoted by
\[
\alpha_{\max}(G)=\max_{x\in
V(G)}\alpha_x(G),
\]
and the total node weight of $G$ will be denoted by
\[
\alpha_G=\sum_{x\in V(G)}\alpha_x(G).
\]
We will always assume that $\alpha_G$ is strictly positive. If $U$ is a
subset of $V(G)$, we will use $\alpha_U(G)$ to denote the total weight of
$U$, i.e., $\alpha_U(G)=\sum_{x\in U}\alpha_x(G)$.  We say a sequence
$(G_n)_{n \ge 0}$ of graphs has \emph{no dominant nodes} if
$\alpha_{\max}(G_n)/\alpha_{G_n}\to 0$ as $n \to \infty$. Finally, if $c\in
\R$, we often use $cG$ to denote the weighted graph with vertex weights
identical to those of $G$ and edge weights $\beta_{xy}(cG)=c\beta_{xy}(G)$.
If $G$ is a simple graph with edge set $E(G)$, we often identify it with the
weighted graph with vertex weights $1$ and edge weights
$\beta_{xy}=\one_{xy\in E(G)}$. In this case, $\alpha_G$ is just the number
of vertices in $G$, and $(\beta_{xy}(G))_{x,y \in V(G)}$ is the adjacency
matrix. As usual, we use $[n]$ to denote the set $[n]=\{1,\dots,n\}$ and $\N$
to denote the set of positive integers. Finally, we define the density of a
weighted graph $G$ to be
\[
\|G\|_1=\sum_{x,y\in V(G)}\frac{\alpha_x(G)\alpha_y(G)}{\alpha_G^2}|\beta_{xy}(G)|.
\]
Note that for an unweighted graph without self-loops, $\|G\|_1$ is just the
edge density $2|E(G)|/|V(G)|^2$.

\subsection{Convergence in metric}
\label{sec:metric-conv}

One of the main topics studied in \cite{part1} is the conditions under which
a sequence of sparse graphs contains a subsequence that converges in metric.
This question led us to the notion of $L^p$ upper regularity, and more
generally uniform upper regularity.  Upper regularity plays an important role
in the proofs in the present paper, but it is not essential for stating our
main results.  We therefore defer the discussion of uniform upper regularity
to Section~\ref{sec:upper-reg} and restrict ourselves here to just defining
convergence in metric.  For examples, see Section~\ref{subsec:exandcounter}.

As already discussed, it is convenient to define this distance by embedding
the space of graphs into the set of functions from $[0,1]^2$ into the reals.

\begin{definition}\label{def:GRAPHON} An $L^p$ \emph{graphon} is a
measurable, symmetric function $W\colon [0,1]^2\to \R$ such that
\[
\|W\|_p:=\left(\int |W(x,y)|^p \, dx \, dy\right)^{1/p}<\infty.
\]
Here symmetry means  $W(x,y)=W(y,x)$ for all $(x,y)\in [0,1]^2$. If we do not
specify $p$, we assume that $W$ is in $L^1$ and call it simply a graphon,
rather than an $L^1$ graphon.
\end{definition}

On the set of graphons, one defines the cut norm $\|\cdot\|_\square$ by
\begin{equation}
\label{cutnorm-def}
\|W\|_\square=\sup_{S,T\subseteq [0,1]}
\left|
\int_{S\times T}W(x,y) \,dx\,dy
\right|,
\end{equation}
where the supremum is over measurable sets $S,T\subseteq [0,1]$; this notion
goes back to the classic paper of Frieze and Kannan \cite{FK} on the ``weak
regularity'' lemma. One then defines the \emph{cut distance} between two
graphons $U$ and $W$ by
\[
\delta_\square(U,W)=\inf_{\phi} \|U-W^\phi\|_\square,
\]
where the infimum is over all invertible maps $\phi\colon [0,1]\to[0,1]$ such
that both $\phi$ and its inverse are measure preserving, and $W^\phi$ is
defined by $W^\phi(x,y)=W(\phi(x),\phi(y))$ (see
\cite{dense0,LSz-Anal,dense1}); such a map $\phi$ is called a
\emph{measure-preserving bijection}.  After identifying graphons with cut
distance zero, the space of graphons equipped with the metric
$\delta_\square$ becomes a metric space.

To define the cut distance between two weighted graphs, we assign a graphon
$W^G$ to a weighted graph $G$ as follows: let $n=|V(G)|$, identify $V(G)$
with $[n]$, and let $I_1,\dots,I_n$ be consecutive intervals in $[0,1]$ of
lengths $\alpha_1(G)/\alpha_G$, $\dots$, $\alpha_n(G)/\alpha_G$,
respectively. We then define $W^G$ to be the step function that is constant
on sets of the form $I_u\times I_v$ with
\begin{equation}
\label{W-G-def}
W^G(x,y)=\beta_{uv}(G)\qquad\text{if}\qquad (x,y)\in I_u\times I_v.
\end{equation}
Informally, we consider the adjacency matrix of $G$ and replace each entry
$(u,v)$ by a square of size $\alpha_u(G)\alpha_v(G)/\alpha_G^2$ with the
constant function $\beta_{uv}$ on this square.

With this definition, one easily checks that the density  of a weighted graph
$G$ can be expressed as $\|G\|_1=\|W^G\|_1$. For dense graphs, one can define
a distance $\delta_\square(G,G')$ between two graphs by just considering the
cut distance between $W^G$ and $W^{G'}$. But for sparse graphs, the
inequality
\[
\delta_\square(W^G,W^{G'})\leq \|W^G - W^{G'}\|_\square \leq \|W^G\|_1+
\|W^{G'}\|_1
\]
means the cut distance is not very informative, since under this metric all
sparse graph sequences are Cauchy sequences.

To overcome this problem, we  identify weighted graphs whose edge weights
only differ by a multiplicative factor.\footnote{Of course, this slightly
decreases our ability to distinguish between dense graphs.} Explicitly, we
introduce the distance
\begin{equation}
\label{delcutn-def}
\delcutn(G,G')=\delta_\square\left(\frac{1}{\|G\|_1}W^G, \frac{1}{\|G'\|_1}W^{G'}\right),
\end{equation}
where in the degenerate case of a graph $G$ with $\|G\|_1=0$ we define
$\frac{1}{\|G\|_1}W^G$ to be zero. As an example, with this definition, two
random graphs $G_{n,p}$ for different $p$ can be shown to be close in the
metric $\delcutn$, as are two random graphs with different numbers of nodes,
at least as long as $pn\to\infty$ as $n\to\infty$ (see
Section~\ref{subsec:exandcounter}).

\begin{definition}[\cite{part1}]
\label{def:metric-con} Let $(G_n)_{n \ge 0}$ be a sequence of weighted
graphs, and let $W$ be a graphon. We say that $(G_n)_{n \ge 0}$ is
\emph{convergent in metric} if $(G_n)_{n \ge 0}$ is a Cauchy sequence in the
metric $\delcutn(G,G')$ defined in \eqref{delcutn-def}, and we say that $G_n$
\emph{converges to $W$ in metric} if $\delta_\square\Bigl(\frac
1{\|G_n\|_1}W^{G_n},W\Bigr)\to 0$. (Again, we set $\frac
1{\|G_n\|_1}W^{G_n}=0$ if $\|G_n\|_1=0$.)
\end{definition}

\subsection{Convergence of quotients}
\label{sec:G-quotients}

The next object we define is convergence of quotients. To formalize this,
consider a weighted graph $G$ and a partition $\PP=(V_1,\dots,V_q)$ of $V(G)$
into $q$ parts, some of which could be empty. Equivalently, consider a map
$\phi\colon V(G)\to [q]$ (related to $\PP$ by setting $\phi(x)=i$ iff $x\in
V_i$). We will define a quotient $G/\phi=G/\PP$ as a pair
$(\alpha,\beta)=(\alpha(G/\phi),\beta(G/\phi))$, where $\alpha\in\R^q$ is a
vector encoding the total vertex weights of the classes in $\PP$ and
$\beta\in\R^{q \times q}$ is a matrix encoding the number of edges (weighted
by their edge weights) between different classes. Explicitly,
\begin{equation}
\label{G-phi-alpha-def}
\alpha_i(G/\phi)=\frac {\alpha_{V_i}(G)}{\alpha_G}
\end{equation}
and
\begin{equation}
\label{G-phi-def}
\beta_{ij}(G/\phi)=\frac {1}{\|G\|_1}
\sum_{(u,v)\in V_i\times V_j}
\frac{\alpha_u(G)}{\alpha_G}
\frac{\alpha_v(G)}{\alpha_G}
\beta_{uv}(G).
\end{equation}
(In the degenerate case where $G$ has no edges and $\|G\|_1=0$, we set
$\beta(G/\phi)=0$.) We call $G/\phi$ a \emph{$q$-quotient} of $G$, and we
denote the set of all $q$-quotients of $G$ by $\SS_q(G)$.  Note that without
the normalization factor $\frac 1{\|G\|_1}$  in \eqref{G-phi-def}, the
weights $\beta_{ij}(G/\phi)$ would scale with the density of $G$, which means
that all quotients of a sparse sequence would tend to zero.  We have chosen
this factor in such a way that
\[
\|\beta(G/\phi)\|_1=\sum_{i,j}|\beta_{ij}(G/\phi)|\leq 1,
\]
with equality if and only if $G$ has non-negative weights and density
$\|G\|_1>0$.

We will consider $\SS_q(G)$ as a subset of
\begin{equation}\label{S_q-def}
\SS_q=\Bigl\{(\alpha,\beta) \in [0,1]^q \times [-1,1]^{q \times q} :  \sum_{i\in [q]}\alpha_i=1\text{ and }\sum_{i,j\in [q]}|\beta_{ij}|\leq 1
\Bigr\},
\end{equation}
equipped with the usual $\ell_1$ distance on $\R^{q+q^2}$,
\begin{equation}
\label{d1-general}
d_1((\alpha,\beta),(\alpha',\beta'))=
\sum_{i\in [q]}|\alpha_i-\alpha_i'|+\sum_{i,j\in [q]}|\beta_{ij}-\beta_{ij}'|,
\end{equation}
which turns $\SS_q$ into a compact metric space $(\SS_q,d_1)$, a fact we will
use repeatedly in this paper.  For $\ba\in\PD_q$, we define the subspace
\begin{equation}
\label{eq:Sa-def}
\SS_\ba = \{(\alpha,\beta) \in \SS_q : \alpha = \ba\},
\end{equation}
which is closed and hence also compact.
Note that our normalizations are a little
different from those in \cite{dense2}, in order to ensure
compactness.\footnote{Specifically, the analogue of (2.4) and (2.5)
in~\cite{dense2} would be to use $\beta_{ij}(G/\phi)/\big(\alpha_i(G/\phi)
\alpha_j(G/\phi)\big)$ instead of $\beta_{ij}(G/\phi)$, while modifying the
definition of $d_1$ accordingly. This would encode essentially the same
information, but the analogue of $\SS_q$ would not be compact.}

The quotients of a graph $G$ allow one to express many properties of interest
to combinatorialists and computer scientists in a compact form. For example,
the size of a maximal cut in a simple graph $G$,
\[
\MaxCut(G)=\max_{W\subseteq V(G)} \sum_{(x,y)\in W\times (V(G)\setminus W)} \beta_{xy}(G),
\]
can be expressed as
\[
\MaxCut(G)=|E(G)|
\max_{(\alpha,\beta)\in\SS_2(G)}\bigl(\beta_{12}+\beta_{21}\bigr).
\]
Restricting oneself to the subset of quotients $(\alpha,\beta)\in\SS_2(G)$
such that $\alpha_1=\alpha_2=1/2$, one can express quantities like min- or
max-bisection, and considering $\SS_q(G)$ for $q>2$, one obtains weighted
versions of max-cut for partitions into more than two sets.

To define convergence of quotients, we need the Hausdorff metric on subsets
of a metric space $(X,d)$. As usual, it is a metric $d^\Haus$ on the set of
nonempty compact subsets of $X$, defined by
\[
d^\Haus(S,S')= \max \left\{ \sup_{x\in S} d(x, S') , \sup_{y\in S'}
d(y,S)\right\},
\]
where
\[
d(x,S)=\inf_{y\in S} d(x,y).
\]
If $d$ is a complete metric, then so is $d^\Haus$ (see \cite{GBP}), and the
same holds for total boundedness. Thus, starting from the metric space
$(\SS_q,d_1)$, this gives a metric $d_1^\Haus$ on the space of
nonempty compact subsets of $\SS_q$, and this space inherits compactness from the
compactness of $(\SS_q,d_1)$.

\begin{definition}
\label{def:quotients-con} Let $(G_n)_{n \ge 0}$ be a sequence of weighted
graphs. We say that the sequence $(G_n)_{n \ge 0}$ has \emph{convergent
quotients} if for each $q$, there exists a closed set
$\SS_q^\infty\subseteq\SS_q$ such that $\SS_q(G_n)$ converges to
$\SS_q^\infty$ in the Hausdorff metric.
\end{definition}

\begin{remark}
\label{rem:Sqinfty} Note that the closedness of the set
$\SS_q^\infty\subseteq\SS_q$ can be assumed without loss of generality (every
set has Hausdorff distance zero from its closure, which is why the Hausdorff
metric is restricted to closed sets). Furthermore, because of the compactness
of the Hausdorff metric, convergence of quotients is equivalent to the
statement that the quotients $\SS_q(G_n)$ form a Cauchy sequence. It is then
easy to verify that the limiting set $\SS_q^\infty$ can be expressed
as\footnote{To see why, note that if $(\alpha,\beta) \in \SS_q^\infty$, then
\[
d_1((\alpha,\beta),\SS_q(G_n)) \le d_1^\Haus(\SS_q^\infty,S_q(G_n)) \to 0,
\]
while if $d_1((\alpha,\beta),\SS_q(G_n))\to 0$, then combining this limit with
$d_1^\Haus(\SS_q^\infty,S_q(G_n)) \to 0$ and the fact that $\SS_q^\infty$ is closed
shows that $(\alpha,\beta) \in \SS_q^\infty$.}
\[
\SS_q^\infty=\bigl\{(\alpha,\beta)\in\SS_q : d_1\bigl((\alpha,\beta),\SS_q(G_n)\bigr)\to 0\bigr\}.
\]
\end{remark}

\subsection{Statistical physics and multiway cuts}
\label{sec:stat-phys}

Next we define some notions motivated by concepts from statistical physics
(or, for a different audience, by the concept of graphical models in machine
learning).

Consider a weighted graph $G$. We will randomly color the vertices of $G$
with $q$ colors; i.e., we will consider random maps $\phi\colon V(G)\to [q]$.
We allow for all possible maps, not just proper colorings, and call such a
map a \emph{spin configuration}. To make the model nontrivial, different spin
configurations get different weights, based on a symmetric $q\times q$ matrix
$J$ with entries $J_{ij}\in \R$ called the \emph{coupling matrix}. Given $G$
and $J$, a map $\phi\colon V(G)\to [q]$ then gets an \emph{energy}
\begin{equation}
\label{Ephi(G,J)-def}
E_\phi(G,J)=
-\frac 1{\|G\|_1}\sum_{u,v\in V(G)}\frac{\alpha_u(G)\alpha_v(G)}{\alpha_G^2}\beta_{uv}(G)J_{\phi(v)\phi(u)}.
\end{equation}
(If $G$ has no edges, we set this term equal to zero.) Given a vector
$\ba=(a_1,\dots,a_q)$ of nonnegative real numbers adding up to $1$ (we denote
the set of these vectors by $\PD_q$), we consider configurations $\phi$ such
that the (weighted) fraction of vertices mapped onto a particular color $i\in
[q]$ is near to $a_i$.  More precisely, we consider configurations $\phi$ in
\[
\Omega_{\ba,\eps}(G)=\left\{\phi\colon [q]\to V(G) : \left|
\frac{\alpha_{\phi^{-1}(\{i\})}(G)}{\alpha_G}-a_i\right|
\leq
\eps\text{ for all }i\in [q]
\right\}.
\]
On $\Omega_{\ba,\eps}(G)$ we then define a probability distribution
\[
\mu_{G,J}^{({\ba,\eps})}(\phi)=
\frac 1{Z_{G,J}^{({\ba,\eps})}} e^{-|V(G)|E_\phi(G,J)},
\]
where ${Z_{G,J}^{({\ba,\eps})}}$ is the normalization factor
\begin{equation}
\label{Z(G-ba)-def}
Z_{G,J}^{({\ba,\eps})}=\sum_{\phi\in \Omega_{\ba,\eps}(G)} e^{-|V(G)|E_\phi(G,J)}.
\end{equation}
The distribution $\mu_{G,J}^{({\ba,\eps})}$ is usually called the {\it
microcanonical Gibbs distribution of the model $J$ on $G$}, and
${Z_{G,J}^{({\ba,\eps})}}$ is called the \emph{microcanonical partition
function}.

In this paper, we will not analyze the particular properties of the
distribution $\mu_{G,J}^{({\ba,\eps})}$, but we will be interested in the
normalization factor, or more precisely its normalized logarithm
\begin{equation}
\label{Fa(G)-def}
F_{\ba,\eps}(G,J)=-\frac 1{|V(G)|}\log {Z_{G,J}^{({\ba,\eps})}},
\end{equation}
which is called the \emph{microcanonical free energy}.  We will also be
interested in the dominant term contributing to ${Z_{G,J}^{({\ba,\eps})}}$,
or more precisely its normalized logarithm, the \emph{microcanonical ground
state energy}
\begin{equation}
\label{Ea(G)-def}
E_{\ba,\eps}(G,J)=\min_{\phi\in \Omega_{\ba,\eps}(G) } E_\phi(G,J).
\end{equation}
Note that the energy $E_\phi(G,J)$ has been normalized in such a way that
$|E_\phi(G,J)|\leq \|J\|_\infty$ (where
$\|J\|_\infty=\max_{i,j\in[q]}|J_{ij}|$), and $\Omega_{\ba,\eps}(G)\neq
\emptyset$ as long as $\eps\geq\alpha_{\max}(G)/\alpha_G$, implying that
under this condition, $Z_{G,J}^{({\ba,\eps})}\geq
e^{-|V(G)|E_{\ba,\eps}(G,J)}\geq e^{-|V(G)| \|J\|_\infty}$.  Thus, for fixed
$J$ the microcanonical energies and free energies are of order one.

\begin{definition}
\label{def:right-conv} Let $(G_n)_{n \ge 0}$ be a sequence of weighted
graphs. We say that
\begin{enumerate}
\item $(G_n)_{n \ge 0}$ has \emph{convergent microcanonical ground state
    energies} if the limit
\begin{equation}
\label{Ea-limit-def}
E_\ba(J)=\lim_{\eps\to 0}\limsup_{n\to\infty}E_{\ba,\eps}(G,J)=
\lim_{\eps\to 0}\liminf_{n\to\infty}E_{\ba,\eps}(G,J)
\end{equation}
exists for all $q\in\N$, $\ba\in \PD_q$, and symmetric $J\in\R^{q \times
q}$, and

\item $(G_n)_{n \ge 0}$ has \emph{convergent microcanonical free energies}
    if the limit
\begin{equation}
\label{Fa-limit-def}
F_\ba(J)=\lim_{\eps\to 0}\limsup_{n\to\infty}F_{\ba,\eps}(G,J)=
\lim_{\eps\to 0}\liminf_{n\to\infty}F_{\ba,\eps}(G,J)
\end{equation}
exists for all $q\in\N$, $\ba\in \PD_q$, and symmetric $J\in\R^{q \times
q}$.
\end{enumerate}
\end{definition}

Recall that the microcanonical ground state energy describes the largest term
contributing to the microcanonical partition function
${Z_{G,J}^{({\ba,\eps})}}$. Using the fact that this partition function
contains at least one and at most $q^{|V(G)|}$ terms, we will see that a
scaling argument shows that convergence of the microcanonical free energies
implies convergence of the microcanonical ground state energies.  On the
other hand, the energy of a configuration $\phi$ can be expressed in terms of
the quotient $G/\phi$ as
\begin{equation}
\label{eq:E-phi-quotient}
 E_\phi(G,J)=-\langle\beta(G/\phi),J\rangle,
\end{equation}
where
\[
 \langle\beta,J\rangle=
\sum_{i,j}\beta_{ij}J_{ij}.
\]
Using this identity, we will express the microcanonical ground state energy
as a minimum over quotients, which in turn can be used to show that
convergence of quotients implies convergence of the microcanonical ground
state energies.

The following theorem gives a precise statement of these facts.  We will restate the
theorem as part of Lemma~\ref{lem:FatoFetc} and Theorem~\ref{thm:quotients-to-E}
and prove it in Section~\ref{sec:Gproofs}.

\begin{theorem}
\label{thm:implies-E} Let $q\in\N$ and let $(G_n)_{n \ge 0}$ be a sequence of
weighted graphs.
\begin{enumerate}
\item[(i)] If  $\SS_q(G_n)$ converges to a closed set $\SS_q^\infty$ in the
    Hausdorff metric, then the limit \eqref{Ea-limit-def} exists for all
    $\ba\in \PD_q$ and all symmetric $J\in\R^{q \times q}$
 and can be expressed as
\[
E_\ba(J)=-\max_{\substack{ (\alpha,\beta)\in\SS_q^\infty\cap\SS_\ba}}\langle \beta,J\rangle.
\]

\item[(ii)] Let $\ba\in \PD_q$.  If $|V(G_n)|\to\infty$ and the limit
    \eqref{Fa-limit-def} exists for all symmetric $J\in\R^{q \times q}$,
    then the limit \eqref{Ea-limit-def} exists for all such $J$ and
\[
E_\ba(J)= \lim_{\lambda\to\infty}\frac 1\lambda F_\ba(\lambda J).
\]
\end{enumerate}
\end{theorem}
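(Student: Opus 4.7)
The plan is to handle both parts via the quotient reformulation \eqref{eq:E-phi-quotient} of the microcanonical energy.

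\textbf{Part (i).} The identity \eqref{eq:E-phi-quotient} rewrites $E_{\ba,\eps}(G_n,J)=-M_\eps(\SS_q(G_n))$, where
\[
M_\eps(S) = \max\bigl\{\langle\beta,J\rangle : (\alpha,\beta)\in S,\ \snorm{\alpha-\ba}_\infty\leq\eps\bigr\},
\]
with the max over the empty set interpreted as $-\infty$; let $M_0(S)$ denote the analogous quantity where the constraint is tightened to $\alpha=\ba$. The task is to show that $\lim_{\eps\to 0}\limsup_n M_\eps(\SS_q(G_n))$ and $\lim_{\eps\to 0}\liminf_n M_\eps(\SS_q(G_n))$ both equal $M_0(\SS_q^\infty)$. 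For the lower bound, I pick a maximizer $(\alpha^*,\beta^*)\in\SS_q^\infty\cap\SS_\ba$ (compactness) and Hausdorff-approximate it by $(\alpha_n,\beta_n)\in\SS_q(G_n)$; these eventually lie in any fixed $\eps$-slab around $\SS_\ba$, and continuity of $\langle\cdot,J\rangle$ gives $\langle\beta_n,J\rangle\to M_0(\SS_q^\infty)$, so $\liminf_n M_\eps(\SS_q(G_n))\geq M_0(\SS_q^\infty)$ for every $\eps>0$. For the upper bound, Hausdorff convergence moves any near-maximizer $(\alpha,\beta)$ of $M_\eps(\SS_q(G_n))$ to a nearby $(\alpha',\beta')\in\SS_q^\infty$ with $\snorm{\alpha'-\ba}_\infty\leq\eps+\delta_n$ and $\delta_n\to 0$, yielding $\limsup_n M_\eps(\SS_q(G_n))\leq\lim_{\eps'\downarrow\eps}M_{\eps'}(\SS_q^\infty)$; sending $\eps\to 0$ and using compactness of $\SS_q^\infty\cap\SS_\ba$ in $(\SS_q,d_1)$ drives the right-hand side down to $M_0(\SS_q^\infty)$.

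\textbf{Part (ii).} I apply the elementary partition-function sandwich (valid once $\Omega_{\ba,\eps}(G_n)\ne\emptyset$, which holds for large $n$ thanks to $|V(G_n)|\to\infty$):
\[
e^{-|V(G_n)|E_{\ba,\eps}(G_n,J)}\leq Z_{G_n,J}^{(\ba,\eps)}\leq q^{|V(G_n)|}e^{-|V(G_n)|E_{\ba,\eps}(G_n,J)}.
\]
Taking logarithms, dividing by $|V(G_n)|$, and replacing $J$ by $\lambda J$ with $\lambda>0$ (which scales $E_\phi$ linearly), I obtain
\[
E_{\ba,\eps}(G_n,J) - \frac{\log q}{\lambda} \leq \frac{1}{\lambda}F_{\ba,\eps}(G_n,\lambda J) \leq E_{\ba,\eps}(G_n,J).
\]
Taking $\limsup_n$ and $\liminf_n$, then $\eps\to 0$, the free-energy hypothesis forces the middle term to tend to $F_\ba(\lambda J)/\lambda$, thereby pinning both $\lim_{\eps\to 0}\limsup_n E_{\ba,\eps}(G_n,J)$ and $\lim_{\eps\to 0}\liminf_n E_{\ba,\eps}(G_n,J)$ within $(\log q)/\lambda$ of $F_\ba(\lambda J)/\lambda$. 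Sending $\lambda\to\infty$ forces these two quantities to coincide, and their common value is $\lim_{\lambda\to\infty}F_\ba(\lambda J)/\lambda$.

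\textbf{Main obstacle.} The delicate step is the upper bound in Part~(i): Hausdorff convergence of $\SS_q(G_n)\to\SS_q^\infty$ does not pass directly to the slab intersections, since points of $\SS_q(G_n)$ near the slab boundary $\snorm{\alpha-\ba}_\infty=\eps$ may be approximated by points of $\SS_q^\infty$ lying just outside the slab. Monotonicity of $M_\eps$ in $\eps$ together with compactness of $\SS_q^\infty$ absorb this boundary slack and make the $\eps\to 0$ limit clean. Part~(ii) is then essentially routine once the scaling trick $J\mapsto\lambda J$ is in place.
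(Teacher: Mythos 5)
Your proposal is correct and follows essentially the same route as the paper: part (i) is the paper's proof of Theorem~\ref{thm:quotients-to-E} (rewrite $E_{\ba,\eps}$ via \eqref{eq:E-phi-quotient} as a maximum over the slab $\SS_q(G_n)\cap\SS_{\ba,\eps}$, transfer near-maximizers in both directions using Hausdorff convergence, and use compactness of $\SS_q^\infty\cap\SS_\ba$ to collapse the $\eps$-slab as $\eps\to 0$), and part (ii) is exactly the paper's counting sandwich $F_{\ba,\eps}\le E_{\ba,\eps}\le F_{\ba,\eps}+\log q$ combined with the rescaling $J\mapsto\lambda J$ and linearity of the energy in $\lambda$.
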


\begin{remark}
Definition~\ref{def:right-conv} differs from that given in \cite{dense2} for
dense graphs in that we are taking the double limit of first sending
$n\to\infty$ and then sending $\eps\to 0$, rather than a single limit with an
$n$-dependent $\eps=\eps_n$. (In \cite{dense2}, $\eps_n$ was chosen to be
$\alpha_{\max}(G_n)/\alpha_{G_n}$, even though all theorems involving the
microcanonical free energies required the additional assumption that $G_n$
has vertex weights one, corresponding to $\eps_n=1/|V(G_n)|$). While there is
some merit to the simplicity of a single limit, here we decided to follow the
spirit of the definitions from mathematical statistical physics, where the
formulation of a double limit is standard; it is also more consistent with
the double limits usually taken in the theory of large deviations, where an
$n$-dependent $\eps$ usually makes no sense.

However, the two definitions are equivalent if $G_n$ is dense with bounded
edge weights and vertex weights one (this follows from
Theorem~\ref{thm:G-conv-equiv} below, because such graphs are $L^\infty$
upper regular). Thus, as far as the results of \cite{dense2} are concerned,
there is no difference between the two definitions.
\end{remark}

\subsection{Large deviation convergence}
\label{sec:LD-convergence}

As we have seen in the last section, the quotients of a graph $G$ provide
enough information to calculate the microcanonical ground state energies
\eqref{Ea(G)-def}, since the quotients tell us which energies $E_\phi(G,J)$
can be realized. However, to calculate the microcanonical free energies
\eqref{Fa(G)-def} we need to know a little more, namely how often a term with
given energy appears in the sum \eqref{Z(G-ba)-def}.

This leads to the notion of large deviation convergence (LD convergence),
which was first introduced in the context of bounded degree graphs
\cite{BCG}, where it turned out to be strictly stronger than convergence of
quotients.  Roughly speaking, this notion codifies how often a given quotient
$(\alpha,\beta)\in \SS_q(G)$ appears in a sum of the form
\eqref{Z(G-ba)-def}. Or, put differently, it specifies the probability that
for a uniformly random map $\phi\colon V(G)\to [q]$, the quotient $G/\phi$ is
approximately equal to $(\alpha,\beta)$.  The precise definition is as
follows:

\begin{definition}
\label{def:LD-conv} Let $q\in\N$, let $(G_n)_{n \ge 0}$ be a sequence of
weighted graphs, and let $\PP_{q,G_n}$ be the probability distribution of
$G_n/\phi$ when $\phi\colon V(G_n)\to [q]$ is chosen uniformly at random. We
say that $(G_n)_{n \ge 0}$ is \emph{$q$-LD convergent} if $|V(G_n)|\to\infty$
and
\begin{equation}\label{LD-conv-alt}
\begin{split}
&\lim_{\eps\to 0}\liminf_{n\to\infty}\frac{\log
\PP_{q,G_n}\bigl[d_1((\alpha,\beta),G_n/\phi)\leq \eps\bigr]}{|V(G_n)|}\\
&\qquad 
=\lim_{\eps\to
0}\limsup_{n\to\infty}\frac{\log \PP_{q,G_n}\bigl[d_1((\alpha,\beta),G_n/\phi)\leq \eps\bigr]}{|V(G_n)|}
\end{split}
\end{equation}
and say it is \emph{$q$-LD convergent} with rate function $I_q\colon \SS_q\to
[0,\infty]$ if the above limit is equal to $-I_q((\alpha,\beta))$.  We say
that $(G_n)_{n \ge 0}$ is \emph{LD convergent} if it is $q$-LD convergent for
all $q\in\N$.
\end{definition}

The following theorem states that LD convergence is at least as strong as
convergence of quotients and convergence of the microcanonical free energies.
We prove it in Sections~\ref{sec:proof-LD->Sq} and \ref{sec:proof-LD->Fa}.

\begin{theorem}
\label{thm:LD->F-and-Sq} Let  $q\in\N$ and let $(G_n)_{n \ge 0}$ be a
sequence of weighted graphs. If $(G_n)_{n \ge 0}$ is $q$-LD convergent with
rate function $I_q$, then the following hold:
\begin{enumerate}
\item[(i)] The sets of quotients $\SS_q(G_n)$ converge to the closed set
\[
\SS_q(I_q)=\{(\alpha,\beta)\in\SS_q :
I_q((\alpha,\beta))<\infty\}
\]
in the Hausdorff metric.

\item[(ii)] For all $\ba\in \PD_q$ and all symmetric $J\in\R^{q \times q}$,
    the microcanonical free energies converge to
\[
F_\ba(I_q,J)=\inf_{(\alpha,\beta)\in\SS_{\ba}}
\Bigl(-\langle \beta,J\rangle+ I_q((\alpha,\beta))\Bigr)-\log q.
\]
\end{enumerate}
\end{theorem}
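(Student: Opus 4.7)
The plan is to prove (i) and (ii) in order, after a preliminary lower-semicontinuity fact about $I_q$.

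\emph{Proof of (i).} If $(\alpha_k,\beta_k) \to (\alpha,\beta)$ in $(\SS_q, d_1)$, then for every $\eps > 0$ and every $k$ with $d_1((\alpha_k,\beta_k),(\alpha,\beta)) \le \eps/2$, the inclusion of balls $B_{\eps/2}((\alpha_k,\beta_k)) \subseteq B_\eps((\alpha,\beta))$ gives $\PP_{q,G_n}[d_1(\cdot,(\alpha_k,\beta_k))\le\eps/2] \le \PP_{q,G_n}[d_1(\cdot,(\alpha,\beta))\le\eps]$; passing to $\log/|V(G_n)|$, then $\liminf_n$, then $\eps \to 0$ yields $I_q((\alpha,\beta)) \le I_q((\alpha_k,\beta_k))$, so $I_q$ is lower semi-continuous and $\SS_q(I_q) = \{I_q < \infty\}$ is closed. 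To establish $\SS_q(G_n) \to \SS_q(I_q)$ in the Hausdorff metric, I handle both directions. For $\sup_{(\alpha,\beta)\in\SS_q(I_q)} d_1((\alpha,\beta),\SS_q(G_n)) \to 0$: any $(\alpha,\beta) \in \SS_q(I_q)$ has $\PP_{q,G_n}[d_1(\cdot,(\alpha,\beta))\le\eps]$ eventually positive, so some configuration $\phi$ realizes $d_1((\alpha,\beta), G_n/\phi) \le \eps$, and a finite $\eps/2$-net of the totally bounded $\SS_q(I_q)$ upgrades this pointwise bound to a uniform one. Conversely, if some subsequence gives $(\alpha_k,\beta_k) \in \SS_q(G_{n_k})$ at distance $\ge \delta$ from $\SS_q(I_q)$, compactness extracts $(\alpha_k,\beta_k) \to (\alpha^*,\beta^*) \notin \SS_q(I_q)$; since each $(\alpha_k,\beta_k)$ is realized by at least one $\phi$, we have $\PP_{q,G_{n_k}}[d_1(\cdot,(\alpha^*,\beta^*))\le\eps] \ge q^{-|V(G_{n_k})|}$ eventually for every fixed $\eps>0$, forcing $I_q((\alpha^*,\beta^*)) \le \log q < \infty$, a contradiction.

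\emph{Proof of (ii).} Using \eqref{eq:E-phi-quotient}, for $\phi$ uniform on $[q]^{V(G_n)}$,
\[
Z_{G_n,J}^{(\ba,\eps)} = q^{|V(G_n)|}\cdot\E_\phi\Bigl[e^{|V(G_n)|\langle\beta(G_n/\phi),J\rangle}\one_{|\alpha(G_n/\phi)-\ba|_\infty\le\eps}\Bigr],
\]
so $-F_{\ba,\eps}(G_n,J) = \log q + L_n(\eps)$ where $L_n(\eps)$ is $\frac{1}{|V(G_n)|}$ times the log of this expectation, and it suffices to show $\lim_{\eps\to 0}\lim_{n\to\infty}L_n(\eps) = \sup_{(\alpha,\beta)\in\SS_\ba}[\langle\beta,J\rangle - I_q((\alpha,\beta))]$. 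For the lower bound, given $(\alpha^*,\beta^*) \in \SS_\ba \cap \SS_q(I_q)$ and $\delta \le \eps$, restrict the expectation to $B_\delta((\alpha^*,\beta^*))$ (which sits inside $\{|\alpha-\ba|_\infty\le\eps\}$ because $\alpha^*=\ba$) to get
\[
L_n(\eps) \ge \langle\beta^*,J\rangle - \delta\|J\|_\infty + \tfrac{1}{|V(G_n)|}\log\PP_{q,G_n}[d_1(\cdot,(\alpha^*,\beta^*))\le\delta];
\]
then take $\liminf_n$, set $\delta=\eps\to 0$, and take the supremum over $(\alpha^*,\beta^*)$.

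The upper bound is the main obstacle and is handled by a Varadhan-style covering. Given $\eta > 0$, for each $(\alpha,\beta) \in \SS_q$ choose $\delta_{(\alpha,\beta)} \le \eta$ small enough that $\limsup_n \frac{1}{|V(G_n)|}\log\PP_{q,G_n}[d_1(\cdot,(\alpha,\beta))\le\delta_{(\alpha,\beta)}] \le -I_q((\alpha,\beta)) + \eta$, with the convention that this is $\le -1/\eta$ when $I_q((\alpha,\beta))=\infty$; by compactness extract a finite subcover $\{B_{\delta_i}((\alpha_i,\beta_i))\}_{i=1}^{N_\eta}$. On each ball the integrand is at most $e^{|V(G_n)|(\langle\beta_i,J\rangle + \eta\|J\|_\infty)}$, and only balls with $|\alpha_i-\ba|_\infty \le \eps + \eta$ can meet $\{|\alpha-\ba|_\infty\le\eps\}$, so the expectation is bounded by $N_\eta$ times the maximum over these $i$ of $e^{|V(G_n)|(\langle\beta_i,J\rangle + \eta\|J\|_\infty)}\PP_{q,G_n}[d_1(\cdot,(\alpha_i,\beta_i))\le\delta_i]$. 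Taking $\limsup_n$ kills the $N_\eta$ prefactor; letting $\eps\to 0$ restricts to centers with $|\alpha_i-\ba|_\infty \le \eta$; letting $\eta \to 0$ and passing to a convergent sub-subsequence of maximizing centers $(\alpha_i,\beta_i) \to (\alpha^*,\beta^*) \in \SS_\ba$, upper semi-continuity of $\langle\beta,J\rangle - I_q((\alpha,\beta))$ (continuity of the first term, lsc of $I_q$) produces the matching upper bound $\sup_{(\alpha,\beta)\in\SS_\ba}[\langle\beta,J\rangle - I_q]$. Orchestrating these three nested limits while tracking the $\eta$-slack in both the covering radii and the $\alpha_i$ constraint is the technical heart of the argument.
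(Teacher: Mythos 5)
Your argument is correct, and it reaches both conclusions by broadly the same architecture as the paper (pointwise-to-uniform via compactness for the Hausdorff statement; a sandwich of $Z_{G_n,J}^{(\ba,\eps)}$ between ball-restricted sums, a Varadhan-style finite cover, and lower semicontinuity of $I_q$ for the free energies), but it is more self-contained in one respect. The paper's proof (Lemma~\ref{lem:LD_prelim} and Section~\ref{sec:proof-LD->Fa}) invokes the full large deviation principle \eqref{LD-conv-def} --- justified via Remark~\ref{rem:LD-conv} and the cited Dembo--Zeitouni theorem --- to bound probabilities of closed sets: the set $\{d_1(\cdot,\SS_q(I_q))\ge\eps\}$ in part (i) and the balls $B_\delta(s)$ in part (ii). You instead work directly from the defining double limit \eqref{LD-conv-alt}: in (ii) you inline the compact-covering argument by choosing point-dependent radii $\delta_{(\alpha,\beta)}\le\eta$ straight from the definition (using monotonicity in $\eps$), and in (i) you replace the closed-set upper bound by a subsequence/compactness argument exploiting the quantization $\PP_{q,G_n}\in\{0\}\cup[q^{-|V(G_n)|},1]$ (the paper uses the same quantization, but only after applying the LDP bound). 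The paper's route buys brevity, since the covering work is delegated to the quoted LDP equivalence; yours buys independence from that external citation, at the cost of the three-parameter bookkeeping you acknowledge. One small repair: lower semicontinuity of $I_q$ alone does not make $\{I_q<\infty\}$ closed. You need the additional observation --- which you in fact use two sentences later --- that nonzero ball probabilities are at least $q^{-|V(G_n)|}$, so $I_q$ takes values in $[0,\log q]\cup\{\infty\}$, whence $\SS_q(I_q)=\{I_q\le\log q\}$ is a sublevel set of a lower semicontinuous function and therefore closed; this is exactly how Lemma~\ref{lem:LD_prelim}(i) argues.
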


\subsection{Limiting expressions for convergent sequences of graphs}
\label{sec:limit-expressions}

The results stated so far, namely Theorems~\ref{thm:implies-E}
and~\ref{thm:LD->F-and-Sq}, raise the question of whether the four notions of
convergence considered in these theorems are equivalent.  They also raise the
question of whether the limits of the quotients, microcanonical ground state
energies, and free energies as well as the rate functions $I_q$ can be
expressed in terms of a limiting graphon. It turns out that the answers to
these two questions are related, and that we have equivalence if we postulate
convergence to a graphon $W\in L^1$.

We need some definitions. All of them rely on the notion of a
\emph{fractional partition} of $[0,1]$ into $q$ classes (briefly, a
\emph{fractional $q$-partition}), which we define as a $q$-tuple of
measurable functions $\rho_1,\dots,\rho_q\colon [0,1]\to[0,1]$ such that
$\rho_1(x)+\dots+\rho_q(x)=1$ for all $x\in [0,1]$. We denote the set of
fractional $q$-partitions by $\FP_q$. To each fractional partition
$\rho\in\FP_q$, we assign a weight vector
$\alpha(\rho)=(\alpha_1(\rho),\dots,\alpha_q(\rho))\in\PD_q$ and an entropy
$\Ent(\rho)\in[0,\log q]$ by setting
\[
\alpha_i(\rho)=\int_0^1\rho_i(x) \,dx
\]
and
\[
\Ent(\rho)=\int_0^1 \Ent_x(\rho)\,dx
\quad\text{with}\quad
\Ent_x(\rho)=- \sum_{i=1}^q
\rho_i(x)\log\rho_i(x)
\]
(with $0 \log 0 = 0$). Let
\[
\iv\SS_q = \Bigl\{ (\alpha,\beta) \in [0,1]^q \times
\R^{q\times q} : \sum_{i\in[q]}\alpha_i=1 \Bigr\}
\]
(in comparison with the definition \eqref{S_q-def} of $\SS_q$, we do not
restrict $\beta$). Given a graphon $W$ and a fractional $q$-partition
$\rho\in\FP_q$, we then define the quotient $W/\rho$ to be the pair
$(\alpha,\beta)\in\iv\SS_q$ where
\[
\alpha_i(W/\rho)
=\alpha_i(\rho)
\]
and
\[
\beta_{ij}(W/\rho)= \int_{[0,1]^2} \rho_i(x)\rho_j(y)W(x,y)\,dx\,dy
\]
for $i,j\in [q]$. We call $W/\rho$ a \emph{fractional $q$-quotient} of $W$.
Let $\iv\SS_q(W)$ denote the set of all fractional $q$-quotients of $W$, and
for $\ba \in \PD_q$, let $\iv\SS_\ba(W)$ denote the set of pairs in
$\iv\SS_q(W)$ whose first coordinate equals $\ba$. It will be shown in
Proposition~\ref{prop:S-Closed} that $\iv\SS_q(W)$ is compact.

Next we define the microcanonical ground state energies and free energies of
a graphon $W$. Given an integer $q\geq 1$ and a symmetric matrix $J\in\R^{q
\times q}$, we define the \emph{energy of a fractional partition}
$\rho\in\FP_q$ to be
\[
\EE_\rho(W,J)=
-\sum_{i,j}J_{ij}\ \int_{[0,1]^2}\rho_i(x)\rho_j(y)W(x,y)\,dx\,dy.
\]
For $\bf a\in \PD_q$, the \emph{microcanonical ground state energy} is
defined as
\begin{equation}
\label{EaW-def}
\EE_\ba(W,J)=\inf_{\rho : \alpha(\rho)=\ba}\EE_\rho(W,J),
\end{equation}
while the \emph{microcanonical free energy} is defined as
\begin{equation}
\label{FaW-def}
\FF_\ba(W,J)
=\inf_{\rho:\alpha(\rho)=\ba}
\biggl(
\EE_\rho(W,J)-\Ent(\rho)
\biggr).
\end{equation}
The infima in these equations are over all fractional $q$-partitions of
$[0,1]$ such that $\alpha(\rho)=\ba$. Note that all these quantities are well
defined because $0\leq\Ent_x(\rho)\leq \log q$ and
\[
|\EE_\rho(W,J)|
\leq \|J\|_\infty\|W\|_1
.
\]

Finally, the LD rate function $I_q(F,W)$ is defined as
\begin{equation}\label{IW-def}
I_q((\alpha,\beta),W) =\inf_{\rho \in \FP_q: W/\rho = (\alpha,\beta)} (\log q- \Ent(\rho)),
\end{equation}
Note that $I_q((\alpha,\beta),W) \in [0,\log q]$ if $(\alpha,\beta) \in
\iv\SS_q(W)$ and $I_q((\alpha,\beta),W) = \infty$ if $(\alpha,\beta) \notin
\iv\SS_q(W)$.

We are now  ready to state the main theorem of this paper.  Recall that
graphons are assumed to be $L^1$ (and not necessarily $L^\infty$, as in some
papers in the literature).

\begin{theorem}
\label{thm:limit-expressions} Let $W$ be a graphon, and let $(G_n)_{n \ge 0}$
be a sequence of weighted graphs with no dominant nodes, in the sense that
$\alpha_{\max}(G_n)/\alpha_{G_n}\to 0$. Then the following statements are
equivalent:
\begin{enumerate}
\item[(i)] $(G_n)_{n \ge 0}$ converges to $W$ in metric.

\item[(ii)] For all $q\in\N$, $\SS_q(G_n)\to\iv\SS_q(W)$ in the Hausdorff
    metric $d_1^\Haus$.

\item[(iii)]The  microcanonical ground state energies of $(G_n)_{n \ge 0}$
    converge to those of $W$.
\end{enumerate}
If all the vertices of $G_n$ have weight one, then the following two
statements are also equivalent to (i):
\begin{enumerate}
\item[(iv)] $(G_n)_{n \ge 0}$ is LD convergent with rate function
    $I_q=I_q(\cdot,W)$.
\item[(v)] The microcanonical free energies of $(G_n)_{n \ge 0}$ converge
    to those of $W$.
\end{enumerate}
\end{theorem}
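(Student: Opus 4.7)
The plan is to close the cycle of implications in the spirit of Figure~\ref{fig:convergence}. Four of the needed implications, namely (ii)$\Rightarrow$(iii), (iv)$\Rightarrow$(ii), (iv)$\Rightarrow$(v), and (v)$\Rightarrow$(iii), are already supplied by Theorems~\ref{thm:implies-E} and~\ref{thm:LD->F-and-Sq}, modulo identifying the abstract limits $\SS_q^\infty$, $E_\ba(J)$, $F_\ba(J)$, and $I_q$ with their graphon counterparts $\iv\SS_q(W)$, $\EE_\ba(W,J)$, $\FF_\ba(W,J)$, and $I_q(\cdot,W)$. The remaining tasks are therefore (a) the forward implications out of (i), (b) the identity checks matching the abstract limits to the graphon expressions, and (c) closing the cycle by (iii)$\Rightarrow$(i).

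For (i)$\Rightarrow$(ii), I would argue by continuity: after applying a suitable measure-preserving bijection, $W^{G_n}/\snorm{G_n}_1\to W$ in cut norm, and each matrix entry $\beta_{ij}(W/\rho)=\int\rho_i(x)\rho_j(y)W(x,y)\,dx\,dy$ varies by at most $\snorm{W^{G_n}/\snorm{G_n}_1-W}_\square$ uniformly in $\rho\in\FP_q$, giving $d_1^\Haus\bigl(\iv\SS_q(W^{G_n}/\snorm{G_n}_1),\iv\SS_q(W)\bigr)\to 0$. The no-dominant-nodes hypothesis then lets me identify $\SS_q(G_n)$ with $\iv\SS_q(W^{G_n}/\snorm{G_n}_1)$ up to a Hausdorff error of order $q\,\alpha_{\max}(G_n)/\alpha_{G_n}$ by discretizing a fractional partition along the intervals $I_u$ used to define $W^{G_n}$. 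Combined with Theorem~\ref{thm:implies-E}(i) and the immediate identity $\EE_\ba(W,J)=-\max_{(\alpha,\beta)\in\iv\SS_\ba(W)}\langle\beta,J\rangle$ that follows from rearranging~\eqref{EaW-def}, this already gives (iii).

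When vertex weights are one, I would prove (i)$\Rightarrow$(iv) by directly estimating $\PP_{q,G_n}\bigl[d_1((\alpha,\beta),G_n/\phi)\le\eps\bigr]$. The upper bound follows from a multinomial count of integer partitions with prescribed type together with the cut-norm concentration of $\beta(G_n/\phi)$ around its graphon value; the lower bound requires exhibiting, for each $(\alpha,\beta)\in\iv\SS_q(W)$, a fractional partition $\rho$ with $W/\rho=(\alpha,\beta)$ and near-maximal entropy, then discretizing $\rho$ to a genuine partition of $V(G_n)$ and controlling the induced change in $\beta(G_n/\phi)$ via (i). The resulting rate function is exactly $I_q(\cdot,W)$ from~\eqref{IW-def}. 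Then (iv)$\Rightarrow$(v) follows from Theorem~\ref{thm:LD->F-and-Sq}(ii) after interchanging infima to verify $F_\ba(I_q(\cdot,W),J)=\FF_\ba(W,J)$, and (v)$\Rightarrow$(iii) follows from Theorem~\ref{thm:implies-E}(ii) together with the zero-temperature limit $\lim_{\lambda\to\infty}\FF_\ba(W,\lambda J)/\lambda=\EE_\ba(W,J)$, which is immediate from the uniform bound $0\leq\Ent(\rho)\leq\log q$.

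The hard part is (iii)$\Rightarrow$(i), which is the only step that truly uses upper regularity. I would first invoke the results of Section~\ref{sec:inferring-upper-regularity} to deduce that convergent microcanonical ground state energies force $(G_n)$ to be uniformly upper regular. The compactness theorem for such sequences from~\cite{part1} then produces a subsequence converging in metric to some graphon $W'$, and applying the already-established (i)$\Rightarrow$(iii) gives $\EE_\ba(W',J)=\EE_\ba(W,J)$ for every $q$, $\ba$, and $J$. Since $\rho\mapsto W/\rho$ is affine, each set $\{\beta:(\ba,\beta)\in\iv\SS_q(W)\}$ is compact and convex, so $J\mapsto-\EE_\ba(W,J)$ is its support function; equality of support functions gives $\iv\SS_\ba(W)=\iv\SS_\ba(W')$ for every $\ba\in\PD_q$, hence $\iv\SS_q(W)=\iv\SS_q(W')$ for all~$q$. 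Invoking the fact that the full collection of fractional quotients determines a graphon up to measure-preserving equivalence (equivalently, $\delta_\square(W,W')$ can be recovered from discrete quotients at arbitrarily fine resolution) yields $\delta_\square(W,W')=0$, and a standard subsequence argument forces the full sequence to converge to~$W$ in metric.
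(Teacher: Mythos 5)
Your overall architecture follows the paper's (use Section~\ref{sec:inferring-upper-regularity} to infer uniform upper regularity from (iii), then compactness and a subsequence argument), but two key steps are not justified. First, in (i)$\Rightarrow$(ii) you claim that the absence of dominant nodes alone identifies $\SS_q(G_n)$ with $\iv\SS_q\bigl(\tfrac{1}{\|G_n\|_1}W^{G_n}\bigr)$ up to Hausdorff error of order $q\,\alpha_{\max}(G_n)/\alpha_{G_n}$ by rounding a fractional partition vertex by vertex along the intervals $I_u$. Rounding controls the $\alpha$-coordinates at that rate, but the change in the $\beta$-coordinates is governed by the entries of $\tfrac{1}{\|G_n\|_1}W^{G_n}$, which are unbounded for sparse graphs, and the claim fails outright: for the star $K_{1,n-1}$ (vertex weights one, no dominant nodes) the fractional partition $\rho_1=\rho_2=\tfrac12$ gives the quotient with all four $\beta$-entries equal to $\tfrac14$, while every integer quotient has $\beta_{11}=0$ or $\beta_{22}=0$, so the Hausdorff distance stays at least $\tfrac14$. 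This is precisely why the paper first regularizes $G_n$ via the weak regularity lemma (Theorem~\ref{thm:weak-regular-for-graphs}) to obtain a block graph $\iv G_n$ whose graphon has $K$-bounded tails, and only then discretizes fractional partitions over the large blocks, converting a $d_1$-error on partitions into an error on quotients through Lemma~\ref{lem:drho->dW/phi}, which needs the bounded tails; the enabling hypothesis is uniform upper regularity (available from (i) by Theorem~\ref{thm:metric>upper-regular}), not merely no dominant nodes. The same regularization is needed to make your sketch of (i)$\Rightarrow$(iv) rigorous.

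Second, and more seriously, your (iii)$\Rightarrow$(i) rests on deducing $\delta_\square(W,W')=0$ from $\EE_\ba(W,J)=\EE_\ba(W',J)$ for all $q,\ba,J$, and the argument given does not work. The map $\rho\mapsto W/\rho$ is not affine --- $\beta_{ij}(W/\rho)$ is quadratic in $\rho$ --- so the slices $\{\beta:(\ba,\beta)\in\iv\SS_q(W)\}$ are not known to be convex, and equality of the support functions $J\mapsto-\EE_\ba(\,\cdot\,,J)$ yields only equality of closed convex hulls, not $\iv\SS_\ba(W)=\iv\SS_\ba(W')$. Moreover, even granting equality of all fractional quotient sets, the assertion that these determine an $L^1$ graphon up to measure-preserving equivalence is not a citable fact you can ``invoke'': it is essentially the hard content of Theorem~\ref{thm:E-->delta}, which the paper proves with new machinery --- the quasi-inner product $\CC(W,Y)$, monotone rearrangement inequalities showing $W$ and $W'$ have the same distribution (Lemmas~\ref{lem:E-to-C} and \ref{lem:C-to-distribution}), and an $\arctan$ truncation to conclude $\delta_1(W,W')=0$ in the unbounded setting, where the simple $\E[W^2]$ argument of the dense theory is unavailable. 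As written, your proposal assumes precisely this crux, so the cycle of implications is not closed.
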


We prove this theorem in Section~\ref{sec:graphs->graphons}.

\subsection{Uniform upper regularity}
\label{sec:upper-reg}

It is natural to ask whether one can state
Theorem~\ref{thm:limit-expressions} without reference to the limiting graphon
$W$. It turns out that the answer is yes, and in fact this reformulation
(Theorem~\ref{thm:G-conv-equiv}) will play a key role in the proof.  To state
this theorem, we need the notion of upper regularity, which first arose in
our study of subsequential metric convergence in \cite{part1} and plays a key
role both in that paper and in this one.

To define this concept, we define the $L^p$ norm of a weighted graph $G$ to
be
\[
\|G\|_p=\left(\sum_{x,y\in V(G)}\frac{\alpha_x(G)\alpha_y(G)}{\alpha_G^2}|\beta_{xy}(G)|^p\right)^{1/p},
\]
and for $p=\infty$ we set
\[
\|G\|_\infty=\max_{\substack{x,y\in V(G)\\\alpha_x(G),\alpha_y(G) > 0}}|\beta_{xy}(G)|.
\]
As we already have seen in Section~\ref{sec:metric-conv}, when studying graph
convergence for sparse graphs, it is natural to reweight the edge weights by
$\frac 1{\|G\|_1}$ to obtain a weighted graph which does not go to zero for
trivial reasons. In order to control the now possibly large entries of the
adjacency matrix of the weighted graph $\frac 1{\|G\|_1} G$, one might want
to require the $L^p$ norm of $\frac 1{\|G\|_1} G$ to be bounded, but this
turns out to be too restrictive. Instead, we will use a weaker condition,
which requires the $L^p$ norm of $\frac 1{\|G\|_1} G$ to be bounded ``on
average,'' at least when the averages are taken over sufficiently large
blocks. To make this precise, we need some additional notation.

Given a weighted graph $G$ and a partition $\PP=\{V_1,\dots,V_q\}$ of $V(G)$
into disjoint sets $V_1,\dots,V_q$, we define $G_\PP$ to be the weighted
graph with the same vertex weights as $G$ and edge weights which are defined
by averaging over the blocks $V_i\times V_j$, suitably weighted by the vertex
weights:
\begin{equation}
\label{G_P-def}
\beta_{xy}(G_\PP)=\frac 1{\alpha_{V_i}(G)\alpha_{V_j}(G)}\sum_{(u,v)\in V_i\times V_j}\alpha_u(G)\alpha_v(G)\beta_{uv}(G)
\end{equation}
if $(x,y)\in V_i\times V_j$ and $\alpha_{V_i}(G)\alpha_{V_j}(G)>0$, while we
set $\beta_{xy}(G_\PP)=0$ if either $x$ or $y$ lie in a block $V_k(G)$ with
total node weight $\alpha_{V_k}(G)=0$.

\begin{definition}\label{def:up-reg}
Let $G$ be a weighted graph, let $C,\eta>0$, and let $p>1$. We say that $G$
is \emph{$(C,\eta)$-upper $L^p$ regular} if
$\alpha_{\max}(G)\leq\eta\alpha_G$ and
\[
\|G_\PP\|_p\leq C \|G\|_1
\]
for all partitions $\PP=\{V_1,\dots,V_q\}$ for which
$\min_i\alpha_{V_i}\geq\eta\alpha_G$. We say that a sequence of graphs
$(G_n)_{n \ge 0}$ is \emph{$C$-upper $L^p$ regular} if there exists a
sequence $\eta_n \to 0$ such that $G_n$ is $(C,\eta_n)$-upper regular, and we
say that $(G_n)_{n \ge 0}$ is \emph{$L^p$ upper regular} if there exists a
$C<\infty$ such that $(G_n)_{n \ge 0}$ is $C$-upper $L^p$ regular.
\end{definition}

The definition of $L^1$ upper regularity always holds vacuously, but the
following definition of uniform upper regularity turns out to be the correct
$L^1$ analogue, as described in Appendix~C of \cite{part1}.  It is closely
related to the notion of uniform integrability of a set of graphons (see
Section~\ref{sec:Graphon-Results}), and it is the notion we will need in this
paper.

\begin{definition}\label{def:unif-up-reg}
Let $\eta>0$ and let $K \colon (0,\infty)\to (0,\infty)$ be any function.  We
say that a weighted graph $G$ is \emph{$(K,\eta)$-upper regular} if
$\alpha_{\max}(G)\leq\eta\alpha_G$ and
\begin{equation}
\label{K-eta-upreg}
\sum_{x,y\in V(G)}\frac{\alpha_x(G)\alpha_y(G)}{\alpha_G^2}
\frac{|\beta_{xy}(G_\PP)|}{\|G\|_1}\one_{|\beta_{xy}(G_\PP)|\geq K(\eps)\|G\|_1}
\le \varepsilon
\end{equation}
for all $\eps>0$ and all partitions $\PP=\{V_1,\dots,V_q\}$ for which
$\min_i\alpha_{V_i}\geq\eta\alpha_G$. We say that a sequence of graphs
$(G_n)_{n \ge 0}$ is \emph{$K$-upper  regular} if there exists a sequence
$\eta_n \to 0$ such that $G_n$ is $(K,\eta_n)$-upper regular, and we say that
$(G_n)_{n \ge 0}$ is \emph{uniformly upper regular} if there exists a
function $K \colon (0,\infty)\to (0,\infty)$ such that $(G_n)_{n \ge 0}$ is
$K$-upper regular.
\end{definition}

Note that the properties of $L^p$ upper regularity and uniform upper
regularity require $(G_n)_{n \ge 0}$ to have \emph{no dominant nodes}, a
property we already encountered in Theorem~\ref{thm:limit-expressions}. One
of the main results of \cite{part1} is the following theorem.

\begin{theorem}[Theorem~C.7 in \cite{part1}] 
\label{thm:compact} Let  $(G_n)_{n \ge 0}$ be a uniformly upper regular
sequence of weighted graphs. Then $(G_n)_{n \ge 0}$ contains a subsequence
that is convergent in metric. Furthermore, if $(G_n)_{n \ge 0}$ is convergent
in metric, then there exists a graphon $W$ such that $G_n$ converges to $W$
in metric.
\end{theorem}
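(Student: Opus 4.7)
The plan is to reduce the compactness statement to the classical Lov\'asz--Szegedy compactness theorem for bounded (i.e., $L^\infty$) graphons via a truncation and diagonal-extraction argument. Set $U_n = W^{G_n}/\|G_n\|_1$, so that by Definition~\ref{def:metric-con} metric convergence of $G_n$ is exactly $\delta_\square$-convergence of $U_n$. It therefore suffices to extract a $\delta_\square$-convergent subsequence of $(U_n)_{n\ge 0}$, and then to upgrade this to full convergence when $(U_n)$ is Cauchy.

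The first step is to translate uniform upper regularity of $(G_n)$ into genuine uniform integrability of $\{U_n\}$. Since $\alpha_{\max}(G_n)\le\eta_n\alpha_{G_n}$ with $\eta_n\to 0$, one can, for each $n$, build a partition $\PP_n$ of $V(G_n)$ whose blocks have total weight roughly $\eta_n\alpha_{G_n}$. The inequality~\eqref{K-eta-upreg} applied to $\PP_n$ says precisely that the block-average of $|U_n|$ at scale $\eta_n$ has uniformly integrable tails at threshold $K(\eps)$. A standard averaging/reverse-martingale argument then bootstraps this to the statement
\[
\sup_n \int_{[0,1]^2} |U_n|\,\one_{|U_n|\ge K}\,dx\,dy \longrightarrow 0 \qquad \text{as } K\to\infty.
\]

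Armed with this, I truncate: for each integer $K$, let $U_n^{[K]} = U_n\,\one_{|U_n|\le K}$, which lies in $L^\infty$ with $\|U_n^{[K]}\|_\infty \le K$. For fixed $K$ the classical dense-graphon compactness theorem of Lov\'asz--Szegedy (which rests on the Frieze--Kannan weak regularity lemma) extracts a $\delta_\square$-convergent subsequence. A diagonal extraction over $K=1,2,\dots$ then yields a single subsequence $(n_j)$ with $U_{n_j}^{[K]}\to W^{[K]}$ in $\delta_\square$ for every $K$. Uniform integrability forces $\sup_n\|U_n-U_n^{[K]}\|_1\to 0$ as $K\to\infty$, so $\{W^{[K]}\}_K$ is Cauchy in $\|\cdot\|_\square$ and converges to a graphon $W\in L^1$; a $3\eps$-argument then shows $U_{n_j}\to W$ in $\delta_\square$, proving the first assertion.

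The second assertion is a purely metric-space fact: a Cauchy sequence with a convergent subsequence converges, to the same limit. Applied to $(U_n)$ in the pseudo-metric $\delta_\square$, this finishes the proof. The main obstacle is the first step, namely promoting the partition-averaged form of upper regularity to genuine uniform integrability of $U_n$ itself. The definition controls $(G_n)_\PP$ rather than $G_n$, and only on partitions whose blocks are not too small, so one must carefully tie the truncation threshold $K(\eps)$ to the scale $\eta_n$ that the maximal-vertex-weight hypothesis makes available, and verify that the tail estimate survives the passage to the finest partition as $\eta_n\to 0$.
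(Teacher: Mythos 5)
There is a genuine gap at the heart of your argument: the claimed bootstrap from upper regularity to uniform integrability of $U_n=\frac{1}{\|G_n\|_1}W^{G_n}$ is false. Condition \eqref{K-eta-upreg} controls the tails of the \emph{block-averaged} graphs $(G_n)_{\PP}$ over partitions whose classes have weight at least $\eta_n\alpha_{G_n}$; averaging only improves integrability, so no ``reverse-martingale'' argument can transfer this control back to $U_n$ itself. Indeed the paper's first example already refutes your claim: for Erd\H{o}s--R\'enyi graphs $G_{n,p_n}$ with $p_n\to 0$ and $np_n\to\infty$, the sequence is ($L^\infty$, hence uniformly) upper regular, yet $U_n$ takes only the values $0$ and roughly $1/p_n$, so $\int |U_n|\one_{|U_n|\ge K}\approx 1$ for every fixed $K$ once $1/p_n>K$, and $\sup_n\int |U_n|\one_{|U_n|\ge K}$ does not tend to $0$. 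Since your truncation-plus-Lov\'asz--Szegedy-plus-diagonalization step rests entirely on this uniform integrability (both for extracting the limit $W$ and for the $3\eps$ upgrade $U_{n_j}\to W$), the proof as written collapses.

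The strategy can be repaired, but only by inserting the main technical ingredient you have assumed away: one must first \emph{regularize}, replacing $U_n$ by a step function $W^{\iv G_n}$ with $\iv G_n=\frac{1}{\|G_n\|_1}(G_n)_{\PP_n}$, where the weak regularity lemma for upper regular graphs (Theorem~\ref{thm:weak-regular-for-graphs}, i.e.\ Theorem~C.12 of \cite{part1}) provides equipartitions $\PP_n$ with $\dcut(U_n,W^{\iv G_n})\to 0$ and with $W^{\iv G_n}$ having $K$-bounded tails. Your truncation and diagonal extraction is then legitimate when applied to the uniformly integrable family $\{W^{\iv G_n}\}$ (this is essentially the content of Theorem~\ref{thm:UI-to-subsequence}), and the final ``Cauchy sequence with a convergent subsequence converges'' step is fine. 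So the error is not the overall compactness framework but the specific claim that uniform upper regularity yields tail control of $U_n$ itself; the actual proof in \cite{part1} routes all tail control through the regularized graphs, and that regularity lemma is precisely the hard part your sketch omits.
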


Conversely, it was shown in  \cite{part1} that every sequence of weighted
graphs which converges in metric to a graphon and has no dominant nodes must
be upper regular. The precise statement is given by the following theorem,
which follows immediately from Corollary~2.11 and Proposition~C.5 in
\cite{part1}.

\begin{theorem}[\cite{part1}]
\label{thm:metric>upper-regular} Let $(G_n)_{n \ge 0}$ be a sequence of
weighted graphs without dominant nodes, and assume that $G_n$ converges to
some graphon $W$ in metric. Then $(G_n)_{n \ge 0}$ is uniformly upper
regular.  If $W$ is in $L^p$, then $(G_n)_{n \ge 0}$ is $L^p$-upper regular.
\end{theorem}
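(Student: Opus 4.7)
The plan is to reformulate the claim in graphon language and exploit the fact that averaging over sufficiently coarse partitions converts cut-norm closeness into sup-norm closeness. Set $U_n = W^{G_n}/\|G_n\|_1$, and pick measure-preserving bijections $\phi_n$ of $[0,1]$ with $\delta_n := \|U_n - V_n\|_\square \to 0$, where $V_n := W^{\phi_n}$; since $\phi_n$ is measure-preserving, $V_n$ is equidistributed with $W$, so in particular $\|V_n\|_p = \|W\|_p$ for every $p \geq 1$. A partition $\PP=\{V_1,\dots,V_q\}$ of $V(G_n)$ with $\min_i \alpha_{V_i}(G_n)\geq \eta\alpha_{G_n}$ corresponds to a partition $\mathcal{Q}=\{Q_1,\dots,Q_q\}$ of $[0,1]$ with $|Q_i|\geq \eta$, and comparing \eqref{W-G-def} with \eqref{G_P-def} shows that $W^{(G_n)_\PP}/\|G_n\|_1 = \E[U_n \mid \FF_\mathcal{Q}]$, the conditional expectation with respect to the $\sigma$-algebra on $[0,1]^2$ generated by the rectangles $Q_i\times Q_j$. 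Thus condition \eqref{K-eta-upreg} is precisely the uniform integrability of the family $\{\E[U_n\mid \FF_\mathcal{Q}]\}$ as $n$ varies and $\mathcal{Q}$ ranges over partitions with $\min_i|Q_i|\geq \eta_n$.

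The key elementary bound is
\[
\bigl\|\E[U_n\mid\FF_\mathcal{Q}] - \E[V_n\mid\FF_\mathcal{Q}]\bigr\|_\infty \;\leq\; \frac{\|U_n-V_n\|_\square}{\eta^2} \;\leq\; \frac{\delta_n}{\eta^2},
\]
since on each block $Q_i\times Q_j$ the two averages differ by $(|Q_i||Q_j|)^{-1}\bigl|\int_{Q_i\times Q_j}(U_n-V_n)\bigr|$, which is at most $\|U_n-V_n\|_\square/\eta^2$. Meanwhile, the family $\{\E[V_n\mid\FF_\mathcal{Q}]\}$ over all $n$ and $\mathcal{Q}$ is uniformly integrable: each $V_n$ is distributed like $W\in L^1$, and the standard splitting $\one_{|\E[V_n\mid\FF_\mathcal{Q}]|\geq c} \leq \one_{|V_n|\geq\sqrt c} + \one_{|V_n|<\sqrt c,\,|\E[V_n\mid\FF_\mathcal{Q}]|\geq c}$ combined with Markov's inequality applied to the second term yields
\[
\int_{\{|\E[V_n\mid\FF_\mathcal{Q}]|\geq c\}}|\E[V_n\mid\FF_\mathcal{Q}]| \;\leq\; \int_{\{|W|\geq\sqrt c\}}|W| + \frac{\|W\|_1}{\sqrt c},
\]
giving a uniform integrability function depending only on $W$.

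To conclude, choose $\eta_n\to 0$ with $\eta_n\geq \alpha_{\max}(G_n)/\alpha_{G_n}$ (possible since there are no dominant nodes) and $\eta_n^3\geq \delta_n$ (possible since $\delta_n\to 0$). Then for every partition $\mathcal{Q}$ with $\min_i|Q_i|\geq \eta_n$ we have $\|\E[U_n\mid\FF_\mathcal{Q}] - \E[V_n\mid\FF_\mathcal{Q}]\|_\infty \leq \eta_n$, and an $L^\infty$-perturbation of size $\eta_n\to 0$ preserves uniform integrability, so \eqref{K-eta-upreg} holds with an explicit $K$, proving uniform upper regularity. For the $L^p$ refinement, contractivity of conditional expectation gives $\|\E[V_n\mid\FF_\mathcal{Q}]\|_p \leq \|V_n\|_p = \|W\|_p$, and the $L^\infty$-perturbation of size $\leq \eta_n$ yields $\|\E[U_n\mid\FF_\mathcal{Q}]\|_p \leq \|W\|_p+1$ for large $n$, which is $L^p$ upper regularity with $C = \|W\|_p+1$. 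The only delicate step is the $L^\infty$-by-cut-norm bound above, which bridges a very weak notion of convergence ($\delta_\square$) and a strong one (uniform integrability); it works only because upper regularity controls coarse-grained \emph{averages} of $U_n$ rather than $U_n$ itself, so the $1/\eta^2$ loss is harmless once $\eta_n$ tends to zero slowly enough.
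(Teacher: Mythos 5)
Your argument is correct. Note that this paper does not prove the statement itself (it is imported from the companion paper, following from Corollary~2.11 and Proposition~C.5 there), and your proof is essentially the natural reconstruction of that argument: after passing to $U_n=W^{G_n}/\|G_n\|_1$ and aligning with $W$ via near-optimal measure-preserving bijections, the identification of $\beta(G_\PP)/\|G_n\|_1$ with the conditional expectation $\E[U_n\mid\FF_{\mathcal Q}]$, the bound $\|\E[U_n\mid\FF_{\mathcal Q}]-\E[W^{\phi_n}\mid\FF_{\mathcal Q}]\|_\infty\le\|U_n-W^{\phi_n}\|_\square/\eta^2$, and the uniform integrability of conditional expectations of a fixed $L^1$ graphon are exactly the ingredients behind the graphon notion of $(K,\eta)$-upper regularity in Definition~\ref{def:W-upp-reg}, and the $L^p$ refinement via contractivity of $W\mapsto W_{\mathcal Q}$ is the right one. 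Two minor points to tighten: the one-line justification of your uniform-integrability display is a bit off as written (split $V_n$ itself as $V_n\one_{|V_n|<\sqrt c}+V_n\one_{|V_n|\ge\sqrt c}$ and use contractivity of conditional expectation together with Markov's inequality, rather than splitting the indicator of the event $\{|\E[V_n\mid\FF_{\mathcal Q}]|\ge c\}$ by the size of $V_n$), although the displayed inequality itself is correct; and the finitely many indices $n$ for which your choice of $\eta_n$ is not yet small or $\delta_n$ not yet tiny should be absorbed by the trivial remark that any single weighted graph with $\alpha_{\max}(G)\le\eta\alpha_G$ satisfies \eqref{K-eta-upreg} and the $L^p$ bound for constants of order $1/\eta^2$, so one may enlarge $K$ and $C$ accordingly.
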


A uniformly upper regular sequence of simple graphs must have unbounded
average degree, by Proposition~C.15 in \cite{part1}.  This corresponds to the
fact that graphons are not the appropriate limiting objects for graphs with
bounded average degree (although they apply to all other sparse graphs).

Returning to the subject of this paper, the question of whether the five
versions of convergence defined in Sections~\ref{sec:metric-conv} through
\ref{sec:LD-convergence} are equivalent, we are now ready to state our
results without reference to a limiting graphon.

\begin{theorem}
\label{thm:G-conv-equiv} Let $(G_n)_{n \ge 0}$ be a uniformly upper regular
sequence of weighted graphs. Then the following three statements are
equivalent:
\begin{enumerate}
\item[(i)] $(G_n)_{n \ge 0}$ is convergent in metric.

\item[(ii)]  $(G_n)_{n \ge 0}$ has convergent quotients.

\item[(iii)] $(G_n)_{n \ge 0}$ has convergent microcanonical ground state
    energies.
\end{enumerate}
If all the vertices of $G_n$ have weight one, then the following two
statements are also equivalent to (i):
\begin{enumerate}
\item[(iv)] $(G_n)_{n \ge 0}$ is LD convergent.

\item[(v)] $(G_n)_{n \ge 0}$ has convergent microcanonical free energies.
\end{enumerate}
\end{theorem}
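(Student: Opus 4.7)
The plan is to reduce Theorem~\ref{thm:G-conv-equiv} to the graphon-level equivalences of Theorem~\ref{thm:limit-expressions} using the compactness provided by Theorem~\ref{thm:compact}. Uniform upper regularity forces $\alpha_{\max}(G_n)/\alpha_{G_n} \to 0$, so the no-dominant-nodes hypothesis of Theorem~\ref{thm:limit-expressions} is automatic throughout. Several implications among (ii)--(v) are already on hand: (ii)$\Rightarrow$(iii) is Theorem~\ref{thm:implies-E}(i), Theorem~\ref{thm:LD->F-and-Sq} gives both (iv)$\Rightarrow$(ii) and (iv)$\Rightarrow$(v), and (v)$\Rightarrow$(iii) is Theorem~\ref{thm:implies-E}(ii) (using that vertex weights equal to one together with no dominant nodes gives $|V(G_n)|\to\infty$, as required by that theorem).

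For the implications from (i) to each of (ii)--(v), suppose $(G_n)$ is Cauchy in $\delcutn$. By Theorem~\ref{thm:compact}, $(G_n)$ converges in metric to some graphon $W$. Each of (ii)--(v) then follows directly from the appropriate direction of Theorem~\ref{thm:limit-expressions} applied to the pair $((G_n),W)$; in cases (iv) and (v) the vertex-weight-one hypothesis is exactly what Theorem~\ref{thm:limit-expressions} requires.

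The substantive step is to close the cycle by deriving (i) from each of (ii), (iii), (iv), and (v); I treat all four hypotheses in parallel. Given one of them, I first use Theorem~\ref{thm:compact} to extract a subsequence $(G_{n_k})$ that converges in metric to some graphon $W$. Applying the forward direction of Theorem~\ref{thm:limit-expressions} to this subsequence and $W$, the relevant invariant of $(G_{n_k})$ converges to the corresponding invariant of $W$---namely $\iv\SS_q(W)$, $\EE_\ba(W,J)$, the rate function $I_q(\cdot,W)$, or $\FF_\ba(W,J)$. Since the full sequence $(G_n)$ already has the same invariant converging by hypothesis, its limit must agree with the subsequential limit and hence equal the invariant of $W$. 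I then apply the reverse direction of Theorem~\ref{thm:limit-expressions} to the full sequence $(G_n)$ and the graphon $W$: since the invariants of $(G_n)$ converge to those of $W$, $(G_n)$ converges to $W$ in metric, which is (i). As a byproduct, every subsequential metric limit must be equivalent to $W$ in cut distance.

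Thus essentially all of the difficulty lives inside Theorem~\ref{thm:limit-expressions}, whose proof occupies Sections~\ref{sec:Gproofs}--\ref{sec:inferring-upper-regularity}. The present theorem is a repackaging that suppresses the limiting graphon, and once Theorem~\ref{thm:limit-expressions} is in hand the remaining content reduces to the short compactness chase above; the main obstacle is therefore not in this theorem but in establishing the reverse directions of Theorem~\ref{thm:limit-expressions}---that is, recovering metric convergence to $W$ from convergence of the quotient sets, microcanonical ground state energies, free energies, or LD rate functions to those of $W$.
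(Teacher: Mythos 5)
Your reduction is circular as a proof of this theorem within the paper's architecture. The paper does not prove Theorem~\ref{thm:limit-expressions} independently and then specialize; it goes the other way: Theorem~\ref{thm:G-conv-equiv} is proved first, directly, in Section~\ref{sec:graphs->graphons}, and Theorem~\ref{thm:limit-expressions} is then \emph{deduced} from it together with Theorem~\ref{thm:inverse-limit} (which supplies uniform upper regularity from the hypotheses (ii)--(v) of that theorem). In particular, the reverse directions of Theorem~\ref{thm:limit-expressions} that you invoke in your ``substantive step''---e.g.\ that convergence of the microcanonical ground state energies of $(G_n)$ to those of $W$ forces metric convergence to $W$---are themselves obtained in the paper by applying Theorem~\ref{thm:G-conv-equiv} (iii)$\Rightarrow$(i) after Theorem~\ref{thm:inverse-limit} provides upper regularity. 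So deferring ``essentially all of the difficulty'' to Theorem~\ref{thm:limit-expressions} leaves the actual content of the present theorem unproved: you never produce the graph-to-graphon transfer (regularized versions $\iv G_n$ via the weak regularity lemma, Theorem~\ref{thm:weak-regular-for-graphs}, together with Lemma~\ref{lem:drhophi}, Lemma~\ref{lem:G-W-quotients}, and Theorem~\ref{thm:metric-->Iq} for the LD statement), nor the uniqueness input Theorem~\ref{thm:E-->delta} (proved via monotone rearrangement), which says that two graphons with identical microcanonical ground state energies are at cut distance zero.

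The parts of your argument that are sound are the easy ones: uniform upper regularity gives no dominant nodes; (ii)$\Rightarrow$(iii), (iv)$\Rightarrow$(ii), (iv)$\Rightarrow$(v), (v)$\Rightarrow$(iii) follow from Theorems~\ref{thm:implies-E} and \ref{thm:LD->F-and-Sq}; and the subsequence bookkeeping (monotonicity in $\eps$ sandwiches the subsequential double limits between those of the full sequence) is correct. To make the hard directions non-circular you would have to replace the appeal to the reverse directions of Theorem~\ref{thm:limit-expressions} by the paper's actual mechanism: prove the forward implications (i)$\Rightarrow$(ii)--(v) at the graph level via regularization, and then, given (iii) say, extract \emph{two} subsequential metric limits $U$ and $W$ via Theorem~\ref{thm:compact}, use the forward direction to see they have equal ground state energies, and conclude $\delta_\square(U,W)=0$ from Theorem~\ref{thm:E-->delta}, so the whole sequence is Cauchy in $\delcutn$. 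That uniqueness theorem, together with the regularization lemmas, is precisely the work your proposal omits.
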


Note that by Theorems~\ref{thm:implies-E} and \ref{thm:LD->F-and-Sq}, we
already know that (iv) implies both (v) and (ii), and that both (v) and (ii)
imply (iii); in fact, we need neither node weights one, nor the assumption of
upper regularity.  So the important part of this theorem is that under the
assumption of uniform upper regularity, convergence in metric implies
convergence of quotients (and LD convergence, if we assume node weights one),
and convergence of the microcanonical ground state energies implies
convergence in metric.  We prove Theorem~\ref{thm:G-conv-equiv} in
Section~\ref{sec:graphs->graphons}.

One may want to know whether the assumption of upper regularity is actually
necessary for these conclusions to hold.  The answer is yes, by the following
example.

\begin{example}
\label{ex:1} Let $c_n\in\N$ be such that $c_n \to \infty$ and $c_n/n\to 0$
as $n\to\infty$, and let $G_n$ be the disjoint union of a complete graph on
$c_n$ nodes with $n-c_n$ isolated nodes.  Then $(G_n)_{n \ge 0}$ is LD
convergent (and hence has convergent quotients, microcanonical free energies
and microcanonical ground state energies); see Section~\ref{sec:ex5} below.
However, $(G_n)_{n \ge 0}$ is not a Cauchy sequence in the normalized cut
metric $\delcutn$ from \eqref{delcutn-def} and hence does not converge to
any graphon in metric (see the proof of Proposition~2.12(a) in
\cite{part1}).
\end{example}

The following theorem states that convergence of the quotients,
microcanonical ground state energies, or microcanonical free energies to
those of a graphon $W$ all imply upper regularity, as does LD convergence
with a rate function $I_q(\cdot,W)$ given in terms of a graphon $W$.  It is
the analogue of Theorem~\ref{thm:metric>upper-regular} for these notions of
convergence.

\begin{theorem}
\label{thm:inverse-limit} Let $(G_n)_{n \ge 0}$ be a sequence of weighted
graphs with no dominant nodes, and let $W$ be a graphon.  Then any of the
following conditions implies that $(G_n)_{n \ge 0}$ is uniformly upper
regular.
\begin{enumerate}
\item[(i)] The microcanonical ground state energies of $(G_n)_{n \ge 0}$
    converge to those of $W$.
\item[(ii)] For all $q\in\N$, $\SS_q(G_n)\to\iv\SS_q(W)$ in the Hausdorff
    metric $d_1^\Haus$.
\item[(iii)] The  microcanonical free energies of $(G_n)_{n \ge 0}$
    converge to those of $W$.
\item[(iv)] $(G_n)_{n \ge 0}$ is LD convergent with rate function
    $I_q=I_q(\cdot,W)$.
\end{enumerate}
\end{theorem}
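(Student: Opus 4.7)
My plan has two phases: first, reduce conditions (ii)--(iv) to condition (i); second, show that (i) alone implies uniform upper regularity.

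\textbf{Phase 1 (reductions).} I will use Theorem \ref{thm:LD->F-and-Sq}(ii) to deduce (iii) from (iv), after verifying the algebraic identity $F_\ba(I_q(\cdot,W), J) = \FF_\ba(W, J)$ obtained by swapping the nested infima in the definition of $I_q(\cdot,W)$. I will use Theorem \ref{thm:implies-E}(ii) together with the elementary identity $\EE_\ba(W,J) = \lim_{\lambda\to\infty} \lambda^{-1} \FF_\ba(W, \lambda J)$ (proved by factoring $\lambda$ out of the linear energy term in the definition of $\FF_\ba$) to deduce (i) from (iii). Finally, Theorem \ref{thm:implies-E}(i) combined with $\EE_\ba(W,J) = -\max_{(\alpha,\beta)\in\iv\SS_\ba(W)}\langle \beta, J\rangle$ (immediate from the definitions together with compactness of $\iv\SS_\ba(W)$ from Proposition \ref{prop:S-Closed}) yields (i) from (ii). The remaining task is to prove that (i) alone implies uniform upper regularity.

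\textbf{Phase 2 (main direction).} The idea is to introduce the continuous functional
\[
f_K(\alpha,\beta) = \sum_{i,j} (|\beta_{ij}| - K\alpha_i\alpha_j)_+,
\]
which sandwiches the ``large mass'' functional $S_K(\alpha,\beta) = \sum_{i,j} |\beta_{ij}|\,\one_{|\beta_{ij}| \ge K\alpha_i\alpha_j}$ from Definition \ref{def:unif-up-reg} via $f_K \le S_K \le 2 f_{K/2}$. Crucially, $f_K$ admits the variational representation
\[
f_K(\alpha,\beta) = \max_{s \in \{-1,0,1\}^{q \times q}} \Bigl(\langle \beta, s\rangle - K\sum_{i,j}|s_{ij}|\alpha_i\alpha_j\Bigr)
\]
as a maximum of finitely many affine functionals in $\beta$. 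For $\eps > 0$ I pick $K_0 = K_0(\eps)$ via $L^1$ integrability of $W$ so that $\|W\,\one_{|W|>K_0}\|_1 < \eps$; a short calculation splitting $W$ into its bounded and tail parts then gives $f_{K_0}(\alpha,\beta) < \eps$ for every fractional quotient $(\alpha,\beta)\in \iv\SS_q(W)$, uniformly in $q$ and $\ba$. Applying condition (i) separately to each of the finitely many sign matrices $s$ and interchanging the maximum over $\phi$ with the maximum over $s$ transfers this bound to the graphs: for fixed $q$ and $\ba$,
\[
\limsup_{n\to\infty} \max_{\phi\in\Omega_{\ba,\eps'}(G_n)} f_{K_0}(\alpha(G_n/\phi),\beta(G_n/\phi)) \le \sup_{\iv\SS_\ba(W)} f_{K_0} < \eps,
\]
and the sandwich inequality then gives $\limsup_n \max_\phi S_{2K_0}(\alpha(G_n/\phi),\beta(G_n/\phi)) < 2\eps$.

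\textbf{Main obstacle.} The hardest step will be promoting the fixed-$(q,\ba)$ bound above to the uniform statement demanded by Definition \ref{def:unif-up-reg}: a single function $K$ and a sequence $\eta_n\to 0$ must simultaneously handle all $\eps$, all $q\le 1/\eta_n$, and all balanced $\ba\in\PD_q$. For uniformity in $\ba$ at fixed $q$, I will use a finite $\delta$-net in $\PD_q$ together with the inclusion $\Omega_{\ba,\eps'}(G_n)\subseteq \Omega_{\ba^*,\eps'+\delta}(G_n)$ when $|\ba - \ba^*| \le \delta$, combined with continuity of $\ba \mapsto \iv\SS_\ba(W)$ (via reweighting of fractional partitions). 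Uniformity across $q$ will be achieved by a diagonal argument: for each $q$ the rate of convergence in condition (i) yields some $n_0(\eps,q)$, and I choose $\eta_n\to 0$ slowly enough that for all $q\le 1/\eta_n$ and a dense countable set of threshold values $\eps$, the Phase 2 estimates hold by step $n$. This completes the construction of $K$-upper regularity with $K(\eps) = 2K_0(\eps/2)$.
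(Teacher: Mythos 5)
Your Phase 1 reductions are exactly the paper's: (iv)$\Rightarrow$(iii) is Lemma~\ref{lem:W-LD=>E} (the infimum swap you describe), (iii)$\Rightarrow$(i) is Lemma~\ref{lem:FatoFetc}(iii) together with $\EE_\ba(W,J)=\lim_{\lambda\to\infty}\lambda^{-1}\FF_\ba(W,\lambda J)$, and (ii)$\Rightarrow$(i) is Theorem~\ref{thm:quotients-to-E} combined with \eqref{Ea-frac-def}. Your Phase 2, however, is a genuinely different route. The paper first proves a combinatorial reduction (Lemma~\ref{lem:equi-upper-reg}): regularity over arbitrary partitions with parts of weight at least $\eta\alpha_G$ follows from the analogous condition over \emph{equipartitions} into $q'\ge\eta^{-2}$ parts (each part is split into near-equal chunks, with lowest-degree leftovers collected into remainder sets controlled by a degree argument). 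It then needs condition (i) only at the uniform vectors $\ba=(1/q,\dots,1/q)$, where balancedness forces the relevant sign matrices to be sparse, $\sum_{i,j}|J_{ij}|\le 4q^2/K$, and it bounds $-\EE_\ba(W,J)$ by $\sup_{\lambda(S)\le 4/K}\int_S|W|$. Your penalized functional $f_K$ bypasses the equipartition reduction entirely: since $f_K(W/\rho)\le\|W\one_{|W|>K}\|_1$ uniformly over \emph{all} fractional partitions, balanced or not, arbitrary partitions of $G_n$ can be treated directly, at the price of invoking condition (i) at general $\ba\in\PD_q$ (handled by compactness of $\PD_q$) rather than only at the uniform vector. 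Both arguments rest on the same two pillars: the tail sum is a sign-matrix energy, and $W\in L^1$ has uniformly integrable tails.

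One step needs repair as written. Condition (i) controls only the double limit $\lim_{\eps\to0}\limsup_n E_{\ba,\eps}(G_n,J)$; since $-E_{\ba,\eps'}(G_n,J)$ is nondecreasing in $\eps'$, for a \emph{fixed} $\eps'$ you only get $\limsup_n\bigl(-E_{\ba,\eps'}(G_n,J)\bigr)\ge-\EE_\ba(W,J)$, not the reverse. Consequently your displayed bound $\limsup_n\max_{\phi\in\Omega_{\ba,\eps'}(G_n)}f_{K_0}\le\sup_{\iv\SS_\ba(W)}f_{K_0}$ is not what (i) yields and can genuinely fail with that right-hand side (quotients with $\alpha$ merely close to $\ba$ can have larger $f_{K_0}$ than those with $\alpha=\ba$); relatedly, a net of mesh $\delta$ fixed in advance is circular, because the admissible $\delta$ at each center depends on the rate of the double limit there. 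The fix is routine but should be made explicit: for each $\ba$ and each of the finitely many symmetric $s\in\{-1,0,1\}^{q\times q}$ choose $\delta(\ba)>0$ with $\limsup_n\bigl(-E_{\ba,\delta(\ba)}(G_n,s)\bigr)\le-\EE_\ba(W,s)+\tau$ and $\delta(\ba)\le\tau/(2K_0q)$, take a finite subcover of $\PD_q$ by the balls $B(\ba,\delta(\ba))$, and observe that the resulting right-hand side is a maximum over the finitely many centers of $\sup_{\rho:\alpha(\rho)=\ba}f_{K_0}(W/\rho)$, which is still at most $\|W\one_{|W|>K_0}\|_1$ -- precisely because your graphon bound is uniform in $\ba$; this also disposes of the uniformity in $\ba$ at fixed $q$ in one stroke. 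Finally, for each fixed $n$ the definition requires the bound for \emph{all} $\eps>0$, not just those your diagonalization reaches by step $n$; as in the paper's footnote, enlarge $K(\eps)$ so that $K(\eps)>\eta_n^{-2}$ for the finitely many offending $n$, which kills the tail sum outright since $\sum_{i,j}|\beta_{ij}(G_n/\PP)|\le 1$ and $\alpha_i\alpha_j\ge\eta_n^2$. With these adjustments your outline goes through.
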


Note that the first two assertions in this theorem already follow by
combining Theorem~\ref{thm:metric>upper-regular} with
Theorem~\ref{thm:limit-expressions}(i)--(iii). However, this is not how our
proofs of Theorems~\ref{thm:limit-expressions} and \ref{thm:inverse-limit}
proceed.  Instead of proving Theorem~\ref{thm:limit-expressions} directly, we
use uniform upper regularity to prove Theorem~\ref{thm:G-conv-equiv} in
Section~\ref{sec:graphs->graphons}. Then Theorem~\ref{thm:inverse-limit} is
exactly what we need to deduce Theorem~\ref{thm:limit-expressions} from
Theorem~\ref{thm:G-conv-equiv}, and we prove Theorem~\ref{thm:inverse-limit}
in Section~\ref{sec:inferring-upper-regularity}.

\section{Further definitions, remarks, and examples}
\label{sec:remarks}

\subsection{Convergence of free energies and ground state energies}

In addition to the microcanonical quantities introduced in
Section~\ref{sec:stat-phys}, statistical physicists often analyze the
unrestricted probability measure
\[
\mu_{G,J,h}(\phi)=
\frac 1{Z_{G,J,h}} e^{-|V(G)| E_\phi(G,J)+|V(G)|\langle h,\alpha(G /\phi)\rangle},
\]
where $h$ is a vector in $\R^q$ called the \emph{magnetic field},
\[
\langle h,\alpha\rangle=\sum_{i\in [q]}h_i\alpha_i,
\]
and $Z_{G,J,h}$ is the normalization factor
\begin{equation}
\label{Z(G)-def}
Z_{G,J,h}=\sum_{\phi\colon V(G)\to [q]} e^{-|V(G)|E_\phi(G,J)+|V(G)|\langle h,\alpha(G /\phi)\rangle},
\end{equation}
usually called the \emph{partition function}. The normalized logarithm of the
partition function is the \emph{free energy}
\[
F(G,J,h)=-\frac 1{|V(G)|}\log {Z_{G,J,h}}
\]
of the model $(J,h)$ on $G$, and the maximizer in the sum \eqref{Z(G)-def},
or more precisely its normalized logarithm, is the \emph{ground state energy}
\[
E(G,J,h)=\min_{\phi\colon [q]\to V(G)} \Bigl(E_\phi(G,J,h)-\langle h,\alpha(G/\phi)\rangle\Bigr).
\]

\begin{definition} \leavevmode 
\begin{enumerate}
\item $(G_n)_{n \ge 0}$ has \emph{convergent ground state energies} if the
    limit
\begin{equation}
\label{E-limit-def}
E(J,h)=\lim_{n\to\infty}
E(G_n,J,h)
\end{equation}
exists for all $q\in\N$, symmetric $J\in\R^{q \times q}$, and $h\in\R^q$.

\item $(G_n)_{n \ge 0}$ has \emph{convergent free energies} if the limit
\begin{equation}
\label{F-limit-def}
F(J,h)=\lim_{n\to\infty}
F(G_n,J,h)
\end{equation}
exists for all $q\in\N$, symmetric $J\in\R^{q \times q}$, and $h\in\R^q$.
\end{enumerate}
\end{definition}

These notions are implied by the microcanonical versions, and convergence of
free energies implies convergence of ground state energies.  This is the
content of the following lemma, which we will prove in
Section~\ref{sec:FatoFetc-proof}. Note that part (iii) is a restatement of
Theorem~\ref{thm:implies-E}(ii).

\begin{lemma}
\label{lem:FatoFetc} Let $(G_n)_{n \ge 0}$ be a sequence of weighted graphs
with $|V(G_n)|\to\infty$, and let $q\in\N$. Then the following hold:
\begin{enumerate}
\item[(i)] Let $J$ be a symmetric matrix in $\R^{q \times q}$, and assume
    that the limit \eqref{Fa-limit-def} exists for all $\ba\in \PD_q$. Then
    the limit \eqref{F-limit-def} exists for all $h\in \R^q$, and
\[
F(J,h)=\inf_{\ba\in \PD_q} \bigl(F_\ba(J)-\langle \ba, h \rangle\bigr).
\]

\item[(ii)] Let $J$ be a symmetric matrix in $\R^{q \times q}$, and assume
    that the limit \eqref{Ea-limit-def} exists for all $\ba\in \PD_q$. Then
    the limit \eqref{E-limit-def} exists for all $h\in \R^q$, and
\[
E(J,h)=\inf_{\ba\in \PD_q} \bigl(E_\ba(J)-\langle \ba, h \rangle\bigr).
\]

\item[(iii)] Let $\ba\in \PD_q$, and assume that the limit
    \eqref{Fa-limit-def} exists for all symmetric $J\in\R^{q \times q}$.
    Then the limit \eqref{Ea-limit-def} exists for all such $J$, and
\[
E_\ba(J)= \lim_{\lambda\to\infty}\frac 1\lambda F_\ba(\lambda J).
\]

\item[(iv)] Assume that the limit \eqref{F-limit-def} exists for all $h\in
    \R^d$ and all symmetric $J\in \R^{q \times q}$. Then the limit
    \eqref{E-limit-def} exists for all $h\in \R^d$ and all symmetric $J\in
    \R^{q \times q}$, and
\[
E(J,h)=\lim_{\lambda\to\infty}\frac 1\lambda F(\lambda J,\lambda h).
\]
\end{enumerate}
\end{lemma}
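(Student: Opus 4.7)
The plan is to handle parts (iii) and (iv) by the elementary sandwich $e^{-|V|E}\leq Z\leq q^{|V|}e^{-|V|E}$ between a partition function and its ground state (the low-temperature limit), and to handle (i) and (ii) via a Legendre-transform stratification of configurations $\phi\colon V(G)\to[q]$ by their type $\alpha(G/\phi)\in\PD_q$.

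For part (iii), writing $Z^{(\ba,\eps)}_{G,\lambda J}=\sum_{\phi\in\Omega_{\ba,\eps}(G)}e^{-\lambda|V(G)|E_\phi(G,J)}$ and applying the sandwich gives
\[
E_{\ba,\eps}(G,J)-\frac{\log q}{\lambda}\leq\frac{1}{\lambda}F_{\ba,\eps}(G,\lambda J)\leq E_{\ba,\eps}(G,J).
\]
Taking $\limsup_n$ then $\eps\to 0$, together with the hypothesized existence of $F_\ba(\lambda J)$, yields a corresponding sandwich for $\bar E_\ba(J):=\lim_{\eps\to 0}\limsup_n E_{\ba,\eps}(G_n,J)$, and analogously for the liminf analogue $\underline E_\ba(J)$; subtracting gives $\bar E_\ba(J)-\underline E_\ba(J)\leq\log q/\lambda$, so letting $\lambda\to\infty$ forces $E_\ba(J)$ to exist and equal $\lim_{\lambda\to\infty}\lambda^{-1}F_\ba(\lambda J)$. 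Part (iv) is verbatim the same argument with $Z_{G,\lambda J,\lambda h}=\sum_\phi e^{-\lambda|V|(E_\phi(G,J)-\langle h,\alpha(G/\phi)\rangle)}$ and $E(G,J,h)$ replacing $Z^{(\ba,\eps)}_{G,\lambda J}$ and $E_{\ba,\eps}(G,J)$.

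For part (i), I would prove the two inequalities separately. Restricting the sum defining $Z_{G,J,h}$ to $\phi\in\Omega_{\ba,\eps}(G)$ and using $|\langle h,\alpha(G/\phi)-\ba\rangle|\leq q\eps\|h\|_\infty$ on this set gives $Z_{G,J,h}\geq Z^{(\ba,\eps)}_{G,J}e^{|V|(\langle h,\ba\rangle-q\eps\|h\|_\infty)}$; taking $\limsup_n$, then $\eps\to 0$, then $\inf_\ba$ yields $\limsup_n F(G_n,J,h)\leq\inf_\ba(F_\ba(J)-\langle h,\ba\rangle)$. For the matching $\liminf$ inequality, fix $m\in\N$ and cover $\PD_q$ by a $(1/m)$-net $\mathcal{N}_m$, so that every $\phi$ lies in some $\Omega_{\ba,1/m}(G)$ with $\ba\in\mathcal{N}_m$; this gives
\[
F(G,J,h)\geq-\frac{\log|\mathcal{N}_m|}{|V(G)|}+\min_{\ba\in\mathcal{N}_m}\bigl(F_{\ba,1/m}(G,J)-\langle h,\ba\rangle\bigr)-\frac{q\|h\|_\infty}{m}.
\]
Using $|V(G_n)|\to\infty$, then letting $m\to\infty$ via compactness (pass to a subsequence along which the minimizing $\ba_m^*\in\mathcal{N}_m$ converges to some $\ba^{**}\in\PD_q$, and use the monotonicity $F_{\ba_m^*,1/m}(G,J)\geq F_{\ba^{**},1/m+|\ba_m^*-\ba^{**}|_\infty}(G,J)$ together with the definition of $F_{\ba^{**}}(J)$) yields $\liminf_n F(G_n,J,h)\geq F_{\ba^{**}}(J)-\langle h,\ba^{**}\rangle\geq\inf_\ba(F_\ba(J)-\langle h,\ba\rangle)$.

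Part (ii) follows the same pattern but is simpler. The upper bound $\limsup_n E(G_n,J,h)\leq\inf_\ba(E_\ba(J)-\langle h,\ba\rangle)$ is immediate from $E(G,J,h)\leq E_{\ba,\eps}(G,J)-\langle h,\ba\rangle+q\eps\|h\|_\infty$. The matching lower bound is a direct compactness argument: letting $\phi_n^*$ attain the minimum defining $E(G_n,J,h)$ and $\ba_n^*:=\alpha(G_n/\phi_n^*)$, extract a subsequence along which both $E(G_n,J,h)\to\liminf_n$ and $\ba_n^*\to\ba^{**}\in\PD_q$; since eventually $\phi_n^*\in\Omega_{\ba^{**},\eps}(G_n)$, we have $E_{\phi_n^*}(G_n,J)\geq E_{\ba^{**},\eps}(G_n,J)$, giving $\liminf_n E(G_n,J,h)\geq E_{\ba^{**}}(J)-\langle h,\ba^{**}\rangle$ after $\eps\to 0$. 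The main subtle point throughout is the chicken-and-egg issue in the lower bound for (i), where the finite $(1/m)$-net depends on $m$ while the rate of convergence $F_{\ba,1/m}(G_n,J)\to F_\ba(J)$ depends on $\ba$; the compactness argument above, together with monotonicity of $F_{\ba,\eps}$ in both $\ba$ and $\eps$, resolves this.
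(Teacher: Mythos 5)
Your proposal is correct and follows essentially the same route as the paper: the low-temperature sandwich $F\leq E\leq F+\log q$ with rescaling by $\lambda$ for (iii)--(iv), restriction to $\Omega_{\ba,\eps}$ for the upper bound in (i)--(ii), and an $\eps$-net of $\PD_q$ combined with compactness and monotonicity of $F_{\ba,\eps}$ to resolve the lower bound in (i). The only (harmless) deviation is in the lower bound of (ii), where you extract the limit of the minimizing configuration's type vector directly instead of reusing the net argument as the paper does.
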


Convergence of the ground state and free energies is strictly weaker than
that of the microcanonical versions.  See Section~\ref{sec:ex4} for an
example.

On the other hand, we can use \eqref{eq:E-phi-quotient} to express both the
microcanonical ground state energies $E_{\ba,\eps}(G,J)$ and the unrestricted
ground state energies $E(G,J,h)$ as minima over quotients.  Using this fact,
it is not hard to show that convergence of quotients implies convergence of
the ground state energies as well as the microcanonical ground state
energies. This is the content of the following theorem, which again holds for
an arbitrary sequence, with no assumption about upper regularity. We prove
the theorem (which encompasses Theorem~\ref{thm:implies-E}(i)) in
Section~\ref{sec:proof-quotients-to-E}.

\begin{theorem}
\label{thm:quotients-to-E} Let $q\in\N$ and let $(G_n)_{n \ge 0}$ be a
sequence of weighted graphs such that $\SS_q(G_n)$ converges to a closed set
$\SS_q^\infty$ in the Hausdorff metric. Then the limit \eqref{Ea-limit-def}
exists for all $\ba\in\PD_q$ and all symmetric $J\in\R^{q \times q}$ and can
be expressed as
\[
E_\ba(J)=-\max_{\substack{ (\alpha,\beta)\in\SS_q^\infty\cap\SS_\ba}}\langle \beta,J\rangle.
\]
and the limit \eqref{E-limit-def} exists for all symmetric $J\in\R^{q \times
q}$ and all $h\in \R^q$ and can be expressed as
\[
E(J,h)=\min_{(\alpha,\beta)\in\SS_q^\infty}\bigl(- \langle \beta,J\rangle-\langle \alpha, h \rangle\bigr),
\]
\end{theorem}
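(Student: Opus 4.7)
The heart of the argument is the identity \eqref{eq:E-phi-quotient}, which states that $E_\phi(G,J) = -\langle \beta(G/\phi),J\rangle$. Since a configuration $\phi$ lies in $\Omega_{\ba,\eps}(G)$ exactly when the quotient $G/\phi=(\alpha,\beta)$ satisfies $|\alpha_i - a_i|\le \eps$ for every $i$, the definition \eqref{Ea(G)-def} and the definition of $E(G,J,h)$ translate into
\[
E_{\ba,\eps}(G,J) = -\max\bigl\{\langle \beta,J\rangle : (\alpha,\beta)\in \SS_q(G),\ |\alpha_i-a_i|\le \eps \text{ for all } i\bigr\}
\]
and
\[
E(G,J,h) = -\max_{(\alpha,\beta)\in \SS_q(G)}\bigl(\langle \beta,J\rangle+\langle \alpha,h\rangle\bigr),
\]
with the convention that the maximum of an empty set is $-\infty$. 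Both objective functions are Lipschitz continuous on the compact metric space $(\SS_q,d_1)$.

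The formula for $E(J,h)$ follows from the standard principle that if nonempty compact sets $X_n \to X$ in Hausdorff metric and $f$ is continuous, then $\max_{X_n} f \to \max_X f$; one direction approximates a maximizer $x\in X$ by $x_n\in X_n$, and the other approximates maximizers $x_n\in X_n$ by points of $X$ and extracts a convergent subsequence using compactness of $X$. Applying this with $X_n=\SS_q(G_n)$, $X=\SS_q^\infty$, and $f((\alpha,\beta))=\langle \beta,J\rangle+\langle \alpha,h\rangle$ yields the claimed formula for $E(J,h)$.

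The microcanonical statement is more delicate because intersecting $\SS_q(G_n)$ with the constraint set $\{(\alpha,\beta): |\alpha_i-a_i|\le \eps \text{ for all } i\}$ does not commute with Hausdorff convergence in general. I would handle this with a sandwich at slightly different thresholds $\eps'<\eps$. On the one hand, any $(\alpha^*,\beta^*)\in \SS_q^\infty$ with $|\alpha^*_i-a_i|\le \eps'$ admits approximants $(\alpha_n,\beta_n)\in \SS_q(G_n)$ with $d_1((\alpha_n,\beta_n),(\alpha^*,\beta^*))\to 0$, and for $n$ large $|\alpha_{n,i}-a_i|\le \eps$; thus $(\alpha_n,\beta_n)$ is feasible in the max defining $E_{\ba,\eps}(G_n,J)$, with $\langle\beta_n,J\rangle\to \langle\beta^*,J\rangle$. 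Taking the sup over such $(\alpha^*,\beta^*)$ gives $\limsup_n E_{\ba,\eps}(G_n,J)\le -\max\langle \beta,J\rangle$ over $\SS_q^\infty$ intersected with the $\eps'$-slice. On the other hand, maximizers $(\alpha_n,\beta_n)\in \SS_q(G_n)$ in the $\eps$-slice have approximants $(\alpha'_n,\beta'_n)\in \SS_q^\infty$ via Hausdorff convergence, and compactness of $\SS_q^\infty$ provides a convergent subsequence whose limit $(\alpha^*,\beta^*)\in \SS_q^\infty$ automatically satisfies $|\alpha^*_i-a_i|\le \eps$; this gives $\liminf_n E_{\ba,\eps}(G_n,J)\ge -\max\langle \beta,J\rangle$ over $\SS_q^\infty$ intersected with the $\eps$-slice.

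Finally, to send $\eps\to 0$, I would note that the sets $\SS_q^\infty\cap \{(\alpha,\beta): |\alpha_i-a_i|\le \eps\ \text{for all }i\}$ form a nested decreasing family of compacts with intersection $\SS_q^\infty\cap \SS_\ba$, so by a standard argument $\max\langle \beta,J\rangle$ over them converges to $\max\langle \beta,J\rangle$ over $\SS_q^\infty\cap \SS_\ba$ (both equal to $-\infty$ precisely when $\SS_q^\infty\cap \SS_\ba=\emptyset$, by the finite intersection property). Combined with the sandwich from the previous paragraph, this squeezes both $\lim_{\eps\to 0}\liminf_n E_{\ba,\eps}(G_n,J)$ and $\lim_{\eps\to 0}\limsup_n E_{\ba,\eps}(G_n,J)$ to $-\max_{\SS_q^\infty\cap \SS_\ba}\langle \beta,J\rangle$, as claimed. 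The main obstacle throughout is the non-commutativity of Hausdorff limits with intersections, which is exactly what the $\eps'<\eps$ sandwich resolves.
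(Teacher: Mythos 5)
Your proof is correct, and for the microcanonical statement it is essentially the paper's argument: both use \eqref{eq:E-phi-quotient} to write $E_{\ba,\eps}(G_n,J)$ as (minus) a maximum of $\langle\beta,J\rangle$ over the slice $\SS_q(G_n)\cap\SS_{\ba,\eps}$, transfer near-maximizers back and forth via Hausdorff convergence at the cost of slightly enlarging the slice (your $\eps'<\eps$ device plays exactly the role of the paper's passage from $\SS_{\ba,\eps}$ to $\SS_{\ba,2\eps}$), and then use compactness of $\SS_q^\infty$ and the nested-intersection argument to identify the $\eps\to 0$ limit with $\max_{\SS_q^\infty\cap\SS_\ba}\langle\beta,J\rangle$. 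The one place you deviate is the unrestricted energy: the paper deduces the formula for $E(J,h)$ from the microcanonical limits by invoking Lemma~\ref{lem:FatoFetc}(ii), whereas you prove it directly as convergence of maxima of a continuous function over Hausdorff-convergent compact sets (no slicing is needed there since the constraint set is all of $\SS_q$); your route is a bit more self-contained and incidentally avoids the hypothesis $|V(G_n)|\to\infty$ appearing in the statement of Lemma~\ref{lem:FatoFetc}, though it yields the same formula. The degenerate situation where a slice $\SS_{\ba,\eps}(G_n)$ or $\SS_q^\infty\cap\SS_\ba$ is empty is glossed over in the paper as well, and your explicit empty-max convention handles it consistently.
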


Much as in Section~\ref{sec:limit-expressions}, we can write down limiting
expressions for a graphon $W$. The \emph{ground state energy of the model
$(J,h)$ on $W$} is
\[
\EE(W,J,h)=\inf_{\rho\in\FP_q}\biggl(
\EE_\rho(W,J)-\sum_ih_i\int_{[0,1]}\rho_i(x)\,dx\biggr)
\]
and its \emph{free energy} is defined as
\[
\FF(W,J,h) =\inf_{\rho\in\FP_q} \biggl( \EE_\rho(W,J)-\sum_ih_i\int_{[0,1]}\rho_i(x)\,dx-\Ent(\rho) \biggr).
\]
It follows from Lemma~\ref{lem:FatoFetc} and
Theorem~\ref{thm:limit-expressions} that if $(G_n)_{n \ge 0}$ has no dominant
nodes and converges to $W$ in metric, then its ground state energies converge
to those of $W$, and if all the vertices of $G_n$ have weight one, then the
free energies also converge to those of $W$.

\subsection{LD convergence}

\begin{remark}
\label{rem:LD-conv} It is not hard to see that $(G_n)_{n \ge 0}$ is $q$-LD
convergent if and only if $\PP_{q,G_n}$ obeys a large deviation principle
with speed $|V(G_n)|$, i.e., if there exists a lower semicontinuous function
$I_q\colon \SS_q\to [0,\infty]$ such that
\begin{equation}
\label{LD-conv-def}
\begin{split}
-\inf_{(\alpha,\beta)\in \mathring{S}}I_q((\alpha,\beta))
& \leq
\liminf_{n\to\infty}\frac{\log
\PP_{q,G_n}\bigl[ G_n/\phi\in \mathring{S} \bigr]}{|V(G_n)|}\\
&\leq
\limsup_{n\to\infty}\frac{\log
\PP_{q,G_n}\bigl[ G_n/\phi\in \bar S \bigr]}{|V(G_n)|}
\leq
-\inf_{(\alpha,\beta)\in \bar S}I_q((\alpha,\beta))
\end{split}
\end{equation}
for all sets $S\subseteq \SS_q$. Here $\bar S$ denotes the closure of $S$ and
$\mathring{S}$ its interior.

Indeed, assume that \eqref{LD-conv-def} holds for some lower semicontinuous
function $I_q\colon \SS_q\to [0,\infty]$.  By the lower semicontinuity of
$I_q$,
\[
I_q((\alpha,\beta))=\lim_{\eps\to 0}\inf\{I_q((\alpha',\beta')) :
d_1((\alpha,\beta),(\alpha',\beta'))<\eps\},
\]
which implies \eqref{LD-conv-alt} when inserted into \eqref{LD-conv-def}. It
turns out that \eqref{LD-conv-alt} is also sufficient for \eqref{LD-conv-def}
to hold. Indeed, under the assumption that the underlying metric space is
compact (which is the case here), the equality of the two limits in
\eqref{LD-conv-alt} implies that $\PP_{q,G_n}$ obeys a large deviation
principle with rate function given by $I_q$; see, for example, Theorem~4.1.11
in \cite{DZ} for the proof.
\end{remark}

\subsection{Examples}
\label{subsec:exandcounter}

In this section we give some examples of convergent graph sequences, as well
as a few counterexamples in which the equivalences in
Theorem~\ref{thm:G-conv-equiv} fail (of course because uniform upper
regularity does not hold).

\subsubsection{Erd\H os-R\'enyi random graphs}
The simplest example of a uniformly upper regular sequence---in fact an
$L^\infty$-upper regular sequence---is the standard Erd\H os-R\'enyi random
graphs $G_{n,p}$ obtained by connecting each pair of distinct vertices in
$[n]$ independently with probability $p$. Here $p$ can depend on $n$, as long
as $pn\to\infty$ as $n\to \infty$. Under this condition, $G_{n,p}$ converges
with probability one to the constant graphon $W=1$. This can proved in
several ways, for example by showing that in expectation  all the quotients
in $\SS_q(G_{n,p})$ converge to the corresponding quotients in $\iv\SS_q(W)$
and proving concentration with the help of Azuma's inequality.

\subsubsection{Stochastic block models}
Next we consider the block models obtained as follows.  Fix $k\in\N$, a
symmetric matrix $B=(b_{ij})_{i,j\in[k]}$ with entries $b_{ij}\geq 0$
satisfying $k^{-2}\sum_{i,j}b_{ij}=1$, and a target density $\rho_n\leq
1/\max b_{ij}$.  Divide $[n]$ into $k$ blocks $V_1,\dots,V_k$ of equal size
(or, in the case where $n$ is not divisible by $k$ of sizes differing by at
most $1$) and define $p_{uv}=\rho_n b_{ij}$ if $(u,v)\in V_i\times V_j$. Then
we connect vertices $u$ and $v$ with probability $p_{uv}$. If
$n\rho_n\to\infty$ as $n\to\infty$, then the resulting graph converges with
probability one to the step function $W$ that is equal to $b_{ij}$ on the
block $(\frac{i-1}n,\frac in]\times (\frac {j-1}n,\frac jn]$.  The proof can
again be obtained by proving that the quotients converge in expectation,
followed by a concentration argument.

\subsubsection{Power law graphs}
Starting again with the vertex set $[n]$, connect $i\neq j$ with probability
$\min(1,n^\beta (ij)^{-\alpha})$, where $0 < \alpha < 1$ and $0 \le \beta <
2\alpha$.  In other words, the expected degree distribution follows an
inverse power law with exponent $\alpha$, while the $n^\beta$ scaling factor
ensures that the probabilities do not become too small.  If $\beta >
2\alpha-1$, then the expected number of edges is superlinear, and a similar
argument to the one used in the above two examples shows convergence, this
time to a graphon that is not in $L^\infty$, namely $W(x,y) =
(1-\alpha)^2(xy)^{-\alpha}$.

\subsubsection{$W$-random graphs}
Our fourth example provides a construction of a sequence $(G_n)_{n \ge 0}$ of simple graphs
that converge to a given graphon $W$ with non-negative entries $W(x,y)\geq 0$.
Normalizing $W$ so that $\int_{[0,1]^2} W=1$ and fixing a target
density $\rho_n$, we proceed by first choosing $n$ i.i.d.\ variables $x_1,\dots x_n$ uniformly
in $[0,1]$, and then defining a random graph $G_n(W,\rho_n)$ on $\{1,\dots,n\}$ by
connecting each pair $\{i,j\}\in {[n]\choose 2}$ independently with probability
$\min\{1,\rho_nW(x_i,y_i)\}$.  Assuming that $\rho_n \to 0$ and $n\rho_n\to\infty$, the
graphs $G_n$ converge to $W$ under the normalized cut metric with probability one, by Theorem~2.14
in \cite{part1}.  If $W$ is a step function, this is more or less
equivalent to the convergence of stochastic block models, while for general graphons $W$,
one can proceed by first approximating $W$ by a step function.

\subsubsection{Convergence of free energies without convergence of microcanonical free energies}
\label{sec:ex4} Our next example is a generalization of Example~6.3 from
\cite{dense2} to the sparse setting, and is based on the observation that for
an arbitrary sequence of graphs $G_n$, the free energies of $G_n$ and a
disjoint union of $G_n$ with itself are identical (this follows from the fact
that for two disjoint graphs $G$ and $G'$, the partition function on $G\cup
G'$ factors into that of $G$ times that of $G'$).  If we take $G_n$ to be
equal to $G_{n,p}$ if $n$ is odd, and equal to a disjoint union of two copies
of $G_{n,p}$ if $n$ is even, then we get convergence of the free energies. By
contrast, in the notions of convergence from
Theorem~\ref{thm:limit-expressions}, the odd subsequence converges to $W=1$,
while the even one converges to the block graphon $W'$ that is equal to $2$
on $[0,1/2]^2 \cup [1/2,1]^2$ and $0$ elsewhere.  In particular, the
min-bisection of the even subsequence converges to zero, while the
min-bisection of the odd sequence converges to $1/2$.  This shows that the
microcanonical ground state energies are not convergent, which implies that
the microcanonical free energies don't converge either.

\subsubsection{LD convergence without metric convergence}
\label{sec:ex5} This is Example~\ref{ex:1} from Section~\ref{sec:upper-reg},
consisting of a graph $G_n$ that is the disjoint union of a complete graph on
$c_n$ nodes with $n-c_n$ isolated nodes.  A random $q$-quotient is then
determined by how many elements of the clique there are in each part and how
many elements of the non-clique. Calling these numbers $b_1,\dots,b_q$ and
$a_1,\dots, a_q$, we have $b_1+\dots+b_q=c_n$ and $a_1+\dots+a_q=n-c_n$, and
this occurs with probability
\[
q^{-n}
{\binom{c_n}{b_1,\dots,b_q}}
{\binom{n-c_n}{a_1,\dots,a_q}}.
\]
Everything else is determined from this data: $\alpha_i = (a_i+b_i)/n$,
$\beta_{ij} = b_i b_j/(c_n(c_n-1))$ if $i \neq j$, and $\beta_{ii} =
b_i(b_i-1)/(c_n(c_n-1))$. If $c_n\in\N$ is such that $c_n \to \infty$ and
$c_n/n\to 0$ as $n\to\infty$, then in the rate function, the choice of
$b_1,\dots,b_q$  gets wiped out by the choice of $a_1,\dots,a_q$, leading to
 LD convergence with rate function
\[
I_q((\alpha,\beta)) = \log q + \sum_{i=1}^q \alpha_i \log \alpha_i
\]
as long as $\beta \in \R^{q \times q}$ satisfies $\beta_{ii} \ge 0$,
$\beta_{ij} = \sqrt{\beta_{ii} \beta_{jj}}$, and $\sum_i \sqrt{\beta_{ii}} =
1$ (while $I_q((\alpha,\beta)) = \infty$ otherwise).  On the other hand,
$(G_n)_{n \ge 0}$ is not a Cauchy sequence in the normalized cut metric
$\delcutn$ from \eqref{delcutn-def} and hence does not converge to any
graphon in metric (see the proof of Proposition~2.12(a) in \cite{part1}).

\subsubsection{Convergence of quotients without convergence of the microcanonical free energies}
We close our example section with an example from \cite{BCG} (Example~5 from
that paper) which shows that without the assumption of upper regularity,
convergence of quotients does not imply convergence of the microcanonical
free energies, and hence does not imply LD-convergence either.  Before
stating this example, we note that whenever $H_n$ is a sequence of regular
bipartite graphs, $c_n\to\infty$, and $G_n$ is the union of $c_n$ disjoint
copies of $H_n$, then the quotients of $G_n$ converge to the convex hull of
the quotients of a graph consisting of a single edge. To see why, consider a
map from the vertex set of $G_n$ into $[q]$. Since $G_n$ is regular, the
corresponding quotient does not change if we replace $G_n$ by a disjoint
union of $|E(G_n)|$ edges (and map each of the split vertices to the same
element of $[q]$ as its original vertex in $G_n$).  Thus, the quotient is in
the convex hull of the quotients of a single edge.  On the other hand, each
quotient of a single edge can be realized in the bipartite graph $H_n$,
showing that each quotient in the convex hull can be arbitrarily well
approximated in $G_n$ if $c_n\to\infty$.

To get a sequence $G_n$ without convergent microcanonical free energies we
specialize to the case where $H_n$ consists of a $4$-cycle when $n$ is even
and a $6$-cycle when $n$ is odd.  The free energies of $G_n$ are then equal
to the free energies of the $4$-cycle when $n$ is even  and those of the
$6$-cycle when $n$ is odd.  But it is easy to check that the $4$-cycle has
different free energies from  a $6$-cycle, implying that $G_n$ does not have
convergent free energies, and hence does not have convergent microcanonical
free energies either (an alternative proof was given in \cite{BCG}, where it
was used that $G_n$ does not converge in the sense of Benjamini and Schramm
\cite{BS}, which in turn is necessary for convergence of the free energies,
as proved in \cite{bounded}).

\section{Convergence without the assumption of upper regularity}
\label{sec:Gproofs}

In this section, we consider general sequences of weighted graphs $G_n$
without any additional assumptions (except that $G_n$ has at least one edge
with nonzero edge weight). We will prove Lemma~\ref{lem:FatoFetc},
Theorem~\ref{thm:quotients-to-E}, and Theorem~\ref{thm:LD->F-and-Sq}.

\subsection{Free energies and ground state energies}
\label{sec:FatoFetc-proof}

In this section, we prove Lemma~\ref{lem:FatoFetc}. We start with the proof
of (i).  To this end, we note that for all $\ba\in\PD_q$ we have the lower
bound
\[
Z(G_n,J,h)^{1/|V(G_n)|}\geq e^{\langle \ba, h \rangle-\eps \|h\|_1}{\big(Z_{G,J}^{({\ba,\eps})}\big)}^{1/|V(G_n)|},
\]
from which we conclude that
\[ \limsup_{n\to\infty}F(G_n,J,h)
\leq F_{\ba}(J)- \langle \ba, h \rangle.
\]
Since $\ba\in\PD_q$ was arbitrary, this gives
\[ \limsup_{n\to\infty}F(G_n,J,h)
\leq
\inf_{\ba\in \PD_q} \bigl(F_\ba(J)-\langle \ba, h \rangle\bigr).
\]

To get a matching lower bound, we use the fact that $\PD_q$ can be covered by
$\lceil 1/(2\eps)\rceil^{q}\leq \eps^{-q}$ cubes of the form $\prod_{i=1}^q
[a_i-\eps,a_i+\eps]$. Explicitly, let $\PD_q^{(\eps)}$ be the set of points
$\ba$ where each coordinate is an odd multiple of $\eps$. Then
\[
Z(G_n,J,h)^{1/|V(G_n)|}\leq
\eps^{-q/|V(G_n)|}\max_{\ba\in\PD_q^{(\eps)}}e^{\langle \ba, h \rangle+\eps \|h\|_1}
Z_{\ba,\eps}(G_n,J)^{1/|V(G_n)|},
\]
implying that
\begin{align*}
\liminf_{n\to\infty}F(G_n,J,h)
&\geq -\eps \|h\|_1
+\liminf_{n\to\infty}\min_{\ba\in\PD_q^{(\eps)}}\bigl(F_{\ba,\eps}(G_n,J)-\langle \ba, h \rangle\bigr)
\\
&=-\eps \|h\|_1
+\min_{\ba\in\PD_q^{(\eps)}}\liminf_{n\to\infty}\bigl(F_{\ba,\eps}(G_n,J)-\langle \ba, h \rangle\bigr)
\end{align*}
where in the second step, we used that the minimum is over a finite set. Let
$\eps_k$ be a sequence going to zero, let $\ba_k$ be the minimizer on the
right hand side, and assume (by taking a subsequence, if necessary) that
$\ba_k$ converges to some $\ba$. Let
$\tilde\eps_k=\eps_k+\|\ba-\ba_k\|_\infty$. Since
$F_{\ba_k,\eps_k}(G_n,J)\geq F_{\ba,\tilde\eps_k}(G_n,J)$,
\begin{align*}
\liminf_{n\to\infty}F(G_n,J,h)
\geq -\tilde\eps_k \|h\|_1
+\liminf_{n\to\infty}\bigl(F_{\ba,\tilde\eps_k}(G_n,J)-\langle \ba, h \rangle\bigr).
\end{align*}
Sending $k\to\infty$, we conclude that
\begin{align*}
\liminf_{n\to\infty}F(G_n,J,h)
&\geq \lim_{\eps\to 0}
\liminf_{n\to\infty}\bigl(F_{\ba,\tilde\eps}(G_n,J)-\langle \ba, h \rangle\bigr)
\\
&=
\bigl(F_\ba(J)-\langle \ba, h \rangle\bigr)\geq \min_{\ba\in \PD_q}\bigl(F_\ba(J)-\langle \ba, h \rangle\bigr)
\end{align*}
as desired.

The proof of (ii) starts from the observations that
\begin{align*}
E_{\ba',\eps}(G_n,h) -\langle \ba', h \rangle  + \eps \norm{h}_1
&\geq E(G_n,J,h)\\
&\geq
\min_{\ba\in\PD_q^{(\eps)}}\bigl(E_{\ba,\eps}(G_n,J)-\langle \ba, h \rangle
\bigr) - \eps \norm{h}_1
\end{align*}
for all $\ba'\in\PD$ and all $\eps>0$.  Using these two bounds, the proof of
(ii) is now identical to the proof of (i).

To prove (iii) and (iv), we note that the number of terms in \eqref{Z(G)-def}
and \eqref{Z(G-ba)-def} is at most $q^{|V(G)|}$, implying that
\[
F(G,J,h)\leq E(G,J,h)\leq F(G,J,h) +\log q
\]
and
\[
F_{\ba,\eps}(G,J)\leq E_{\ba,\eps}(G,J)\leq F_{\ba,\eps}(G,J) +\log q.
\]
Rescaling $J$ and $h$ by a factor $\lambda \to \infty$, and using that both
the energies and microcanonical energies are linear in $\lambda$, we obtain
the claimed implications. \hfill\qed

\subsection{Convergence of quotients implies convergence of microcanonical ground state energies}
\label{sec:proof-quotients-to-E} In this section, we prove
Theorem~\ref{thm:quotients-to-E}.

To this end, we use \eqref{eq:E-phi-quotient} to express the microcanonical ground
state energies  as
\[
E_{\ba,\eps}(G,J)=- \max_{(\alpha,\beta)\in \SS_{\ba,\eps}(G)} \langle \beta,J\rangle.
\]
where
\[
\SS_{\ba,\eps}(G)=\SS_q(G)\cap\SS_{\ba,\eps}
\quad\text{with}\quad\SS_{\ba,\eps}=\left\{(\alpha,\beta)\in\SS_q : \|\alpha-\ba\|_\infty\leq
  \eps\right\}.
\]

\begin{proof}[Proof of Theorem~\ref{thm:quotients-to-E}]
In view of Lemma~\ref{lem:FatoFetc} it is enough to prove convergence of the
microcanonical ground state energies.

Let $\eps>0$. Since $\SS_q(G_n)$ is
assumed to converge to $\SS_q^{\infty}$, we can find an $n_0\in\N$ such that
\[
d_1^\Haus(\SS_q(G_n),\SS_q^\infty)\leq \eps\quad\text{for all} \quad n\geq n_0.
\]
For $n\geq n_0$, choose
$(\alpha^{(n)},\beta^{(n)})\in\SS_{\ba,\eps}(G_n)\subseteq \SS_q(G_n)$ such
that
\[
E_{\ba,\eps}(G_n,J)=-\langle \beta^{(n)},J\rangle,
\]
and choose $(\tilde\alpha^{(n)},\tilde\beta^{(n)}) \in\SS_q^\infty$ such that
$d_1((\tilde\alpha^{(n)},\tilde\beta^{(n)}),(\alpha^{(n)},\beta^{(n)}))\leq\eps$.
Then
\[E_{\ba,\eps}(G_n,J)\geq-\langle \tilde\beta^{(n)},J\rangle -\eps\|J\|_\infty.
\]
Since $|\tilde\alpha_i^{(n)}-a_i|\leq |\alpha_i^{(n)}-a_i|+
d_1((\tilde\alpha^{(n)},\tilde\beta^{(n)}),(\alpha^{(n)},\beta^{(n)})) \leq 2
\eps$, we have that
$(\tilde\alpha^{(n)},\tilde\beta^{(n)})\in\SS_{\ba,2\eps}$, proving in
particular that
\[
E_{\ba,\eps}(G_n,J)\geq-\langle \tilde\beta^{(n)},J\rangle -\eps\|J\|_1
\geq
 -
\sup_{ (\alpha,\beta)\in\SS_q^\infty \cap\SS_{\ba,2 \eps}}\langle \beta,J\rangle -\eps\|J\|_\infty.
\]
Taking first the $\liminf$ as $n\to\infty$ and then the limit $\eps\to 0$,
this shows that
\[
\lim_{\eps\to 0}\liminf_{n\to\infty}E_{\ba,\eps}(G_n,J)\geq-\lim_{\eps\to 0}
\sup_{ (\alpha,\beta)\in\SS_q^\infty\cap\SS_{\ba, \eps}}\langle
\beta,J\rangle
= - \max_{(\alpha,\beta) \in \SS_q^\infty
  \cap \SS_\ba} \langle \beta, J \rangle,
\]
where the final step is due to compactness. The proof of the matching upper
bound
\[
\lim_{\eps\to 0}\limsup_{n\to\infty}E_{\ba,\eps}(G_n,J)\leq-\lim_{\eps\to 0}
\sup_{ (\alpha,\beta)\in\SS_q^\infty\cap\SS_{\ba,\eps}}\langle
\beta,J\rangle
= - \max_{(\alpha,\beta) \in \SS_q^\infty
  \cap \SS_\ba} \langle \beta, J \rangle,
\]
proceeds along the same lines, now using that for any $(\alpha,\beta)\in
\SS_q^\infty$ with $\| \alpha - \ba\|_\infty\leq\eps$ we can find
$(\alpha^{(n)},\beta^{(n)})\in\SS_{\ba,2\eps}(G_n)$ with
$d_1((\alpha,\beta),(\alpha^{(n)},\beta^{(n)}))\leq\eps$.
\end{proof}

\subsection{LD convergence implies convergence of quotients}
\label{sec:proof-LD->Sq}

In this section, we prove part (i) of Theorem~\ref{thm:LD->F-and-Sq}, which
is statement (iii) of the following lemma.

\begin{lemma}
\label{lem:LD_prelim} Let $q\in \N$, assume that $(G_n)_{n \ge 0}$ is a
$q$-LD convergent sequence of weighted graphs with rate function $I_q$, and
let $\SS_q(I_q)=\{(\alpha,\beta)\in\SS_q : I_q((\alpha,\beta))<\infty\}$.
Then the following are true:
\begin{enumerate}
\item[(i)] The set $\SS_q(I_q)$ is closed with respect to the metric $d_1$.

\item[(ii)] The set $\SS_q(I_q)$ is equal to the set
    $\SS_q^\infty=\bigl\{(\alpha,\beta) :
    d_1\Bigl((\alpha,\beta),\SS_q(G_n)\Bigr)\to 0\bigr\}$.

\item[(iii)]  $\SS_q(G_n)$ converges to $\SS_q(I_q)$ in the Hausdorff
    distance.
\end{enumerate}
\end{lemma}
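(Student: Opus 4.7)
The plan is to establish (ii) first and then deduce both (i) and (iii) from it.

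For (ii), I would prove the two inclusions directly from the definition of $I_q$ via \eqref{LD-conv-alt}. If $d_1((\alpha,\beta), \SS_q(G_n)) \to 0$, then for every $\eps > 0$ and all sufficiently large $n$ there exists a map $\phi_n \colon V(G_n) \to [q]$ with $d_1(G_n/\phi_n, (\alpha,\beta)) \le \eps$. Since $\phi_n$ alone contributes probability $q^{-|V(G_n)|}$ to $\PP_{q,G_n}[d_1(\cdot,(\alpha,\beta)) \le \eps]$, the liminf in \eqref{LD-conv-alt} is bounded below by $-\log q$ for every $\eps$, so $I_q((\alpha,\beta)) \le \log q < \infty$. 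Conversely, if the distance $d_1((\alpha,\beta), \SS_q(G_n))$ does not tend to $0$, then along some subsequence it exceeds a fixed $\eps_0 > 0$; no map $\phi$ gives a quotient within $\eps_0$ of $(\alpha,\beta)$ along this subsequence, so the probability vanishes and the liminf at $\eps_0$ (and hence, by monotonicity in $\eps$, the limit as $\eps \to 0$) is $-\infty$, giving $I_q((\alpha,\beta)) = \infty$. Thus $\SS_q(I_q) = \SS_q^\infty$.

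Statement (i) then follows from (ii) by a triangle-inequality argument: if $(\alpha_k,\beta_k) \in \SS_q^\infty$ converges to $(\alpha,\beta)$ and $\delta > 0$ is given, choose $k$ with $d_1((\alpha_k,\beta_k),(\alpha,\beta)) < \delta/2$; since $d_1((\alpha_k,\beta_k), \SS_q(G_n)) < \delta/2$ eventually, we get $d_1((\alpha,\beta), \SS_q(G_n)) < \delta$ for all large $n$, so $(\alpha,\beta) \in \SS_q^\infty = \SS_q(I_q)$.

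For (iii), one direction, $\sup_{t \in \SS_q(I_q)} d_1(t, \SS_q(G_n)) \to 0$, follows from (ii) together with the compactness of $\SS_q(I_q)$ (which is closed by (i) inside the compact space $\SS_q$): given $\delta > 0$, cover $\SS_q(I_q)$ by finitely many $d_1$-balls of radius $\delta/2$ centered at points of $\SS_q(I_q)$, and apply (ii) at each of the finitely many centers. The other direction, $\sup_{s \in \SS_q(G_n)} d_1(s, \SS_q(I_q)) \to 0$, is where the main work lies and requires the full large deviation upper bound from Remark~\ref{rem:LD-conv}. Fixing $\delta > 0$, consider the closed set $S_\delta = \{t \in \SS_q : d_1(t, \SS_q(I_q)) \ge \delta\}$, on which $I_q \equiv \infty$ because $\SS_q(I_q) = \{I_q < \infty\}$. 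The upper bound in \eqref{LD-conv-def} then yields $\limsup_n \log \PP_{q,G_n}[G_n/\phi \in S_\delta] / |V(G_n)| = -\infty$, and since any single map contributes probability $q^{-|V(G_n)|}$, this forces the probability to be identically zero for all sufficiently large $n$, so no quotient of $G_n$ lies in $S_\delta$. The principal obstacle is this last step, which relies on invoking the equivalence between \eqref{LD-conv-alt} and \eqref{LD-conv-def} justified in Remark~\ref{rem:LD-conv}.
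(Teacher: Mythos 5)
Your proposal is correct and follows essentially the same route as the paper's proof: establish the pointwise identity (ii) directly from \eqref{LD-conv-alt} using the fact that each probability lies in $\{0\}\cup[q^{-|V(G_n)|},1]$, upgrade one Hausdorff direction via compactness of $\SS_q(I_q)$, and obtain the other by applying the large deviation upper bound of \eqref{LD-conv-def} (justified through Remark~\ref{rem:LD-conv}) to the closed set at distance at least $\delta$ from $\SS_q(I_q)$, where $I_q\equiv\infty$ forces the probability to vanish for large $n$. The only minor difference is that you deduce closedness in (i) from (ii) by a triangle-inequality argument, while the paper uses lower semicontinuity of $I_q$ together with the observation that $\SS_q(I_q)=\{I_q\le\log q\}$; both are valid.
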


\begin{proof}
(i) For each $a\in\R$, the set $\{(\alpha,\beta)\in\SS_q :
I_q((\alpha,\beta))\leq a\}$ is closed by the lower semicontinuity of $I_q$.
To prove closedness of the set $\SS_q(I_q)$, we observe that
\[
I_{q,\eps,n}((\alpha,\beta))=-\frac{\log
\PP_{q,G_n}\bigl[d_1((\alpha,\beta),G_n/\phi)\leq \eps\bigr]}{|V(G_n)|}
\]
takes values in $[0,\log q]\cup\{\infty\}$, which in turn implies that $I_q$
takes values in $[0,\log q]\cup\{\infty\}$ and shows that
$\SS_q(I_q)=\{(\alpha,\beta) : I_q((\alpha,\beta))\leq \log q\}$.

(ii) Let us first assume that $(\alpha,\beta)\in \SS_q(I_q)$. Then
\[
\limsup_{n\to\infty} I_{q,\eps,n}((\alpha,\beta))\leq I_q((\alpha,\beta)) \leq \log q \quad\text{for all}\quad \eps>0,
\]
because $ I_{q,\eps,n}((\alpha,\beta))$ is non-increasing in $\eps$. Since
$I_{q,\eps,n}$ takes values in $[0,\log q]\cup\{\infty\}$, this implies that
for all $\eps>0$ we can find an $n_0$ such that
\[ I_{q,\eps,n}((\alpha,\beta))\leq \log q\quad\text{if}\quad n\geq n_0,
\]
which in turn implies that
\[d_1\Bigl((\alpha,\beta),\SS_q(G_n)\Bigr)\leq \eps\quad\text{if}\quad n\geq n_0.
\]
This proves that $(\alpha,\beta)\in \SS_q(I_q)$ implies
$d_1\Bigl((\alpha,\beta),\SS_q(G_n)\Bigr)\to 0$.

Assume on the other hand that $d_1\Bigl((\alpha,\beta),\SS_q(G_n)\Bigr)\to
0$, and by contradiction, assume further that
$(\alpha,\beta)\notin\SS_q(I_q)$, i.e., assume that
$I_q((\alpha,\beta))=\infty$. Since $I_{q,\eps,n}((\alpha,\beta))$ takes
values in $[0,\log q]\cup\{\infty\}$, this implies that there exists an
$\eps>0$ such that
\[
\liminf_{n\to\infty} I_{q,\eps,n}((\alpha,\beta))=\infty.
\]
which in turn implies that there exists an $n_0<\infty$ such that $
I_{q,\eps,n}((\alpha,\beta))=\infty$ for all $n\geq n_0$. As a consequence,
\[d_1\Bigl((\alpha,\beta),\SS_q(G_n)\Bigr) >\eps\quad\text{if}\quad n\geq n_0,
\]
contradicting the assumption that
$d_1\Bigl((\alpha,\beta),\SS_q(G_n)\Bigr)\to 0$.

(iii) Using the fact that $\SS_q(I_q)$ is compact, one easily transforms the
statement that $d_1\Bigl((\alpha,\beta),\SS_q(G_n)\Bigr)\to 0$ for all
$(\alpha,\beta)\in \SS_q(I_q)$ into the uniform statement that
\[
\sup_{(\alpha,\beta)\in \SS_q(I_q)}d_1\Bigl((\alpha,\beta),\SS_q(G_n)\Bigr)\to 0.
\]
To prove convergence in the Hausdorff distance we have to prove the matching
bound
\[
\sup_{(\alpha,\beta)\in \SS_q(G_n)}d_1\Bigl((\alpha,\beta),\SS_q(I_q)\Bigr)\to 0.
\]
Fix $\eps>0$, and let $S$ be the set $S=\{(\alpha,\beta)\in \SS_q :
d_1((\alpha,\beta),\SS_q(I_q))\geq \eps \}$.  Since $S$ is closed, we may use
\eqref{LD-conv-def} to conclude that
\[
\limsup_{n\to\infty}\frac{\log \PP_{q,G_n}[G_n/\phi\in S]}{|V(G_n)|}
\leq -\inf_{(\alpha,\beta)\in S} I_q((\alpha,\beta))= -\infty.
\]
Since the probability on the left hand side takes values in
$\{0\}\cup[q^{-|V(G_n)|},1]$ this shows there must exists an
$n_0=n_0(\eps,q)$ such that $ \PP_{q,G_n}[G_n/\phi\in S]=0 $ if $n\geq n_0$,
showing that $\SS_q(G_n)\cap S=\emptyset$ when $n\geq n_0$.  Expressed
differently, for all $\eps>0$ we can find an $n_0$ such that for $n\geq n_0$,
\[
\SS_q(G_n)\subseteq\{(\alpha,\beta)\in\SS_q : d_1((\alpha,\beta),\SS_q(I_q))<\eps\}.
\]
Or still expressed differently, we can find a sequence $\eps_n\to 0$ as
$n\to\infty$ such that
\[
d_1((\alpha,\beta),\SS_q(I_q))\leq \eps_n \quad\text{for all}\quad (\alpha,\beta)\in \SS_q(G_n). \qedhere
\]
\end{proof}

\subsection{LD convergence implies convergence of free energies}
\label{sec:proof-LD->Fa}

In this section, we prove part (ii) of Theorem~\ref{thm:LD->F-and-Sq}.

(ii) Given $\delta,\eps>0$, chose an arbitrary  $(\alpha,\beta)\in
\SS_{\ba,\delta}$, and let $\Omega_{(\alpha,\beta),\eps}$ be the set of
configurations $\phi\colon V(G_n)\to [q]$ such that
$d_1((\alpha,\beta),G_n/\phi)\leq \eps$. If
$\phi\in\Omega_{(\alpha,\beta),\eps}$, then $|a_i-\alpha_i(G_n/\phi)|\leq
\eps+\delta$, implying that $\Omega_{(\alpha,\beta),\eps}\subseteq
\Omega_{\ba,\eps+\delta}(G_n)$. Using further that
$d_1((\alpha,\beta),G_n/\phi)\leq \eps$ implies that $|\langle \beta,J\rangle
-\langle \beta(G_n/\phi),J\rangle|\leq \eps\|J\|_\infty$, and we then bound
\[
\begin{aligned} 
q^{|V(G_n)|}e^{\langle \beta,J\rangle |V(G_n)|}
&\PP_{q,G_n}\bigl[d_1((\alpha,\beta),G_n/\phi)\leq \eps\bigr]\\
&\leq
\sum_{\phi\in\Omega_{\ba,\eps+\delta}(G_n)}
e^{\bigl(\langle \beta(G_n/\phi),J\rangle+ \eps\|J\|_\infty\bigr)|V(G_n)|}
\\ & =
Z_{G_n,J}^{({\ba,\eps+\delta})} e^{\eps\|J\|_\infty |V(G_n)|},
\end{aligned}
\]
where the last step follows from the definition \eqref{Z(G-ba)-def} and the
fact that $-E_\phi(G_n,J)$ can be expressed as $\langle
\beta(G_n/\phi),J\rangle$. Using \eqref{LD-conv-alt} plus monotonicity in
$\eps$ to guarantee the existence of the limit $\eps\to 0$, this implies that
\[\lim_{\eps\to 0}\liminf_{n\to\infty}
\frac{\log Z_{G_n,J}^{({\ba,\eps+\delta})} }{|V(G_n)|}
\geq\log q+ \langle \beta ,J\rangle - I_q((\alpha,\beta)).
\]
Since $\delta>0$ and $(\alpha,\beta)\in\SS_{\ba,\delta}$ were arbitrary, this
shows that
\[
\begin{aligned}
\lim_{\eps\to 0}\liminf_{n\to\infty}
\frac{\log Z_{G_n,J}^{({\ba,\eps})} }{|V(G_n)|}
&\geq
\lim_{\delta\to 0}\sup_{(\alpha,\beta)\in\SS_{\ba,\delta}}
\Bigl(\log q+\langle \beta,J\rangle- I_q((\alpha,\beta))\Bigr)
\\
&\geq
\sup_{(\alpha,\beta)\in\SS_{\ba}}
\Bigl(\log q+\langle \beta,J\rangle- I_q((\alpha,\beta))\Bigr)
= -F_\ba(I_q,J).
\end{aligned}
\]

To get a matching upper bound we again fix $\ba$ and $\eps,\delta>0$. Since
$\SS_{\ba,\eps}$ is closed and hence compact, we can find a finite set
$S_{\delta}\subset\SS_{\ba,\eps}$ such that
$d_1^\Haus(S_\delta,\SS_{\ba,\eps})\leq \delta$. For $s\in S_{\delta}$, let
$B_{\delta}(s)$ be the set of pairs $(\alpha,\beta)\in \SS_{\ba,\eps}$ such
that $d_1(s,(\alpha,\beta))\leq \delta$.  Then $\SS_{\ba,\eps}=\bigcup_{s\in
S_{\delta}} B_{\delta}(s)$.  As a consequence,
\[
\begin{aligned}
Z_{G_n,J}^{({\ba,\eps})}
&=\sum_{\phi:V(G_n)\to [q]}
e^{\langle G_n/\phi,J\rangle |V(G_n)|}\one_{G_n/\phi\in\SS_{\ba,\eps}}
\\
&\leq q^{|V(G_n)|}
\sum_{s\in S_\delta}
\Biggl(\sup_{(\alpha,\beta)\in B_{\delta}(s)}e^{\langle\beta/\phi,J\rangle |V(G_n)|}
\PP_{q,G_n}\bigl[G_n/\phi\in B_{\delta}(s)\bigr]\Biggr).
\end{aligned}
\]
Since $S_{\delta}$ is finite and does not depend on $n$, we find that
\[
\begin{aligned}
\limsup_{n\to\infty}&\frac{ \log Z_{G_n,J}^{({\ba,\eps})}}{|V(G_n)|}
\\
&\leq\log q
+
\limsup_{n\to\infty}\max_{s\in S_{\delta}}
\left(\sup_{(\alpha,\beta)\in B_\delta(s)}\langle \beta,J\rangle
+
\frac{\log\PP_{q,G_n}\bigl[G_n/\phi\in B_\delta(s)]}{|V(G_n)|}\right)
\\
&=\log q+
\max_{s\in S_\delta}
\left(\sup_{(\alpha,\beta)\in B_\delta(s)}
\langle \beta,J\rangle
+\limsup_{n\to\infty}\frac{\log\PP_{q,G_n}\bigl[G_n/\phi\in B_\delta(s)]}{|V(G_n)|}\right)
\\
&\leq
\log q+
\max_{s\in S_\delta}
\left(\sup_{(\alpha,\beta)\in B_\delta(s)}
\langle \beta,J\rangle
-\inf_{(\alpha,\beta)\in B_\delta(s)}I_q((\alpha,\beta))\right),
\\
\end{aligned}
\]
where we used \eqref{LD-conv-def} in the last step.

Since $ \sup_{(\alpha,\beta)\in B_\delta(s)}\langle \beta, J\rangle\leq \langle
\beta',J\rangle +2 \|J\|_\infty\delta$ for all $(\alpha',\beta')\in B_\delta(s)$, we conclude
that
\[
\begin{aligned}
\limsup_{n\to\infty}\frac{ \log Z_{G_n,J}^{({\ba,\eps})}}{|V(G_n)|}
&\leq
\log q+\max_{s\in S_\delta}
\sup_{(\alpha,\beta)\in B_\delta(s)}
\Bigl(\langle \beta,J\rangle- I_q((\alpha,\beta))\Bigr)
+2\delta\|J\|_\infty
\\
&=
\log q+
\sup_{(\alpha,\beta)\in \SS_{\ba,\eps}}
\Bigl(\langle \beta,J\rangle- I_q((\alpha,\beta))\Bigr)
+2\delta\|J\|_\infty.
\end{aligned}
\]
Sending $\delta\to 0$ and using the fact that $Z_{G_n,J}^{({\ba,\eps})}$ is monotone in
$\eps$, this gives
\[
\lim_{\eps\to 0}
\limsup_{n\to\infty}\frac{\log Z_{G_n,J}^{({\ba,\eps})}}{|V(G_n)|}
\leq
\sup_{(\alpha,\beta)\in\SS_{\ba,\eps}}
\Bigl(\log q+\langle \beta,J\rangle- I_q((\alpha,\beta))\Bigr).
\]
Choose an arbitrary sequence $\eps_k$ going to zero, and choose
$(\alpha_k,\beta_k)\in \SS_{\ba,\eps}$ such that the supremum on the right
hand side is bounded by $\log q+\langle \beta_k,J\rangle-
I_q((\alpha_k,\beta_k))+\eps_k$. Going to a subsequence if needed, assume
that $(\alpha_k,\beta_k)$ converges to some $(\alpha,\beta)\in \SS_\ba$ in
the $d_1$ distance. Then
\[
\begin{aligned}
\lim_{\eps\to 0}
\limsup_{n\to\infty}\frac{\log Z_{G_n,J}^{({\ba,\eps})}}{|V(G_n)|}
&\leq
\log q+\langle \beta,J\rangle- \liminf_{k\to\infty}I_q((\alpha_k,\beta_k))
\\
&\leq
\log q+\langle \beta,J\rangle- I_q((\alpha,\beta))
\end{aligned}
\]
where in the last step we used that $I_q$ is lower semi-continuous.  Since
$(\alpha,\beta)\in\SS_\ba$, the right hand side is bounded by
\[\sup_{(\alpha,\beta)\in\SS_{\ba}}
\Bigl(\log q+\langle \beta,J\rangle- I_q((\alpha,\beta))\Bigr)
= -F_\ba(I_q,J),
\]
as desired. \qed

\section{Convergent sequences of graphons}
\label{sec:W-sequences}

In this section, we formulate and prove our main results in the language of
graphons. Several of these results are generalizations of the corresponding
results for $L^\infty$ graphons proved in \cite{dense2}; the exceptions are
those involving LD convergence, which was not considered in \cite{dense2}. It
turns out, however, that most of our proofs are quite different from those of
\cite{dense2}, most notably the proof that convergence of ground state
energies implies convergence in metric (Section~\ref{sec:Proof-E2Dist}),
which involves some new ideas not present in \cite{dense2} such as the use of
rearrangement inequalities.

\subsection{Upper regularity for graphons}

First we review the notion of upper regularity for graphons from
\cite{part1}.

Given a graphon $W$ and a partition $\PP = (Y_1, \dots, Y_m)$ of the interval
$[0,1]$ into finitely many measurable sets, we define $W_\PP$ to be the step
function whose value on $Y_i \times Y_j$ equals to the average of $W$ over
$Y_i \times Y_j$, i.e.,
\[
W_\PP = \frac{1}{\lambda(Y_i)\lambda(Y_j)}
\int_{Y_i\times Y_j} W(x,y)\,dx\,dy \quad \text{on $Y_i\times Y_j$},
\]
where $\lambda$ denotes the Lebesgue measure. An easy fact is that $W\mapsto
W_\PP$ is contractive with respect to the $L^p$ norms $\|\cdot\|_p$ and the
cut norm $\|\cdot\|_\square$, i.e.,
\begin{equation}\label{CONTRACT}
 \|W_\PP\|_\square\le\|W\|_\square
 \qquad\text{and}\qquad
 \|W_\PP\|_p\le \|W\|_p
\qquad\text{for all $p\geq 1$}.
\end{equation}
Another standard fact is that up to a factor of $2$, $W_\PP$ is the best step
function approximation to $W$ with steps in $\PP$, in the sense that
\[
\|W-W_\PP\|_\square\leq 2\|W-U_\PP\|_\square
\]
for all graphons $U$.  To see why, note that
\[
\begin{aligned}
\|W-W_\PP\|_\square &\leq \|W-U_\PP\|_\square + \|U_\PP-W_\PP\|_\square\\
&= \|W-U_\PP\|_\square + \|(W-U_\PP)_\PP\|_\square\\
&\leq 2\|W-U_\PP\|_\square.
\end{aligned}
\]

\begin{definition} \label{def:W-upp-reg}
Let $K \colon (0,\infty)\to (0,\infty)$ be any function. We say that a
graphon $W$ has \emph{$K$-bounded tails} if for each $\eps>0$,
\begin{equation}
\label{K-bd-tails}
\int_{[0,1]^2} |W(x,y)| \one_{|W(x,y)| \ge K(\varepsilon)}\,dx\,dy \leq \eps.
\end{equation}
A graphon $W$ is \emph{$(K,\eta)$-upper regular} if $W_\PP$ has $K$-bounded
tails whenever $\PP$ is a partition of the interval $[0,1]$ into sets of
measure at least $\eta$. A sequence $(W_n)_{n \ge 0}$ of graphons is
\emph{uniformly upper regular} if there exist $K \colon (0,\infty) \to
(0,\infty)$ and a sequence $\eta_n \to 0$ such that $W_n$ is
$(K,\eta_n)$-upper regular for all $n$.
\end{definition}

A key result from \cite{part1} is that every uniformly upper regular sequence
of graphons contains a subsequent that converges in cut distance to some
graphon. This is stated below.

\begin{theorem}[Theorem~C.7 in \cite{part1}]
\label{thm:UI-to-subsequence} If $(W_n)_{n \ge 0}$ is a sequence of uniformly
upper regular graphons, then there exists a graphon $W$ and a subsequence
$(W'_n)_{n \ge 0}$ of $(W_n)_{n \ge 0}$ such that $\delta_\square(W_n',W)\to
0$.
\end{theorem}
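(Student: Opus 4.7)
The plan is to reduce to the classical Lov\'asz--Szegedy compactness theorem for $L^\infty$-bounded graphons, by exploiting uniform upper regularity to approximate each $W_n$ in cut distance by a uniformly bounded step function, extract a subsequence by dense-theory compactness, and assemble the limit via a diagonal argument.

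The key ingredient is a \emph{uniform bounded approximation}: for every $\eps > 0$ there exist $M = M(\eps,K) < \infty$, an integer $q = q(\eps)$, and $n_0 = n_0(\eps)$ such that for all $n \ge n_0$ one can find a step function $U_n^\eps$ on $q$ equal-measure blocks with $\|U_n^\eps\|_\infty \le M$ and $\|W_n - U_n^\eps\|_\square \le \eps$. To build $U_n^\eps$, I would apply a version of the Frieze--Kannan weak regularity lemma adapted to uniformly upper regular sequences (as in \cite{part1}) to obtain a partition $\PP_n$ of $[0,1]$ into $q$ equal-measure parts with $\|W_n - (W_n)_{\PP_n}\|_\square \le \eps/2$. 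For $n$ large enough that $\eta_n \le 1/q$, the $(K,\eta_n)$-upper regularity of $W_n$ applied to $\PP_n$ yields
\[
\int_{[0,1]^2} |(W_n)_{\PP_n}(x,y)|\,\one_{|(W_n)_{\PP_n}(x,y)| \ge K(\eps/2)}\,dx\,dy \le \eps/2.
\]
Setting $U_n^\eps := (W_n)_{\PP_n}\cdot\one_{|(W_n)_{\PP_n}| \le K(\eps/2)}$ thus produces a step function bounded by $M := K(\eps/2)$ that differs from $(W_n)_{\PP_n}$ by at most $\eps/2$ in $L^1$, hence in cut norm; the triangle inequality gives $\|W_n - U_n^\eps\|_\square \le \eps$.

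With this in hand, I would fix a sequence $\eps_k \to 0$ and invoke the Lov\'asz--Szegedy compactness theorem---which asserts that the set of graphons bounded in $L^\infty$ by any fixed $M$ is compact under $\delta_\square$---to the sequence $(U_n^{\eps_k})_n$ for each $k$, followed by a diagonal extraction. This yields a subsequence $(W_{n_j})_j$ and bounded graphons $V^{\eps_k}$ such that $\delta_\square(U_{n_j}^{\eps_k}, V^{\eps_k}) \to 0$ as $j \to \infty$ for every $k$. Combining with $\|W_{n_j} - U_{n_j}^{\eps_k}\|_\square \le \eps_k$ and the triangle inequality shows that $(V^{\eps_k})_k$ is Cauchy in $\delta_\square$; completeness of the graphon space under the cut distance (proved in \cite{part1}) then yields a graphon $W$ with $\delta_\square(V^{\eps_k}, W) \to 0$, and a final triangle inequality gives $\delta_\square(W_{n_j}, W) \to 0$, as required.

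The main obstacle is the uniform bounded approximation. The standard Frieze--Kannan weak regularity lemma gives a block count depending on $\|W_n\|_2$, which need not be uniformly bounded for $L^1$ graphons. The resolution is to interleave weak regularity with the tail truncation built into upper regularity: one chooses $q$ depending only on $\eps$ (allowing the resulting block averages to be large), and then truncates these averages using $(K,\eta_n)$-upper regularity at cost at most $\eps/2$ in $L^1$. A secondary technical point is checking that the limit $W$ lies in $L^1$, which follows from a uniform bound on $\sup_n \|W_n\|_1$ (deduced from upper regularity applied with the trivial one-block partition) together with lower semicontinuity of the $L^1$ norm under cut convergence.
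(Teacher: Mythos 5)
The statement you are proving is not actually proved in this paper: it is quoted as Theorem~C.7 of \cite{part1}, so the only comparison available is with the argument there, which (like yours) is built on regularizing and truncating via upper regularity and then extracting limits of the resulting bounded step functions. Your first step is essentially right and is the correct use of the hypothesis: block-average $W_n$ over a weak-regularity partition with a number of parts $q(\eps)$ depending only on $\eps$ and $K$, use $(K,\eta_n)$-upper regularity (valid once $\eta_n\le 1/q$) to truncate the averages at level $K(\eps/2)$ at $L^1$-cost $\eps/2$, and conclude $\|W_n-U_n^\eps\|_\square\le\eps$ with $\|U_n^\eps\|_\infty\le K(\eps/2)$. (Do check that the weak regularity lemma you invoke applies to graphons that are merely $(K,\eta)$-upper regular, since $W_n$ itself need not have $K$-bounded tails; only its block averages do.)

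The genuine gap is in the final assembly. You extract, for each $k$, a $\delta_\square$-limit $V^{\eps_k}$ of the truncated approximants and then conclude via ``completeness of the graphon space under the cut distance (proved in \cite{part1}).'' No such completeness theorem for general $L^1$ graphons is proved in \cite{part1}, and it cannot be taken for granted: the known results give compactness/completeness only under uniform boundedness, an $L^p$ bound with $p>1$, or uniform integrability --- and ``a $\delta_\square$-Cauchy, uniformly integrable sequence converges to an $L^1$ graphon'' is essentially the theorem you are trying to prove, so invoking it here is circular. Concretely, two things are missing: (a) the rearrangements aligning the $U^{\eps_k}_{n_j}$ with their limits depend on both $n_j$ and $k$, so the $V^{\eps_k}$ for different $k$ are not coherently aligned and their Cauchyness in $\delta_\square$ does not by itself produce a limit function; (b) even granting alignment, one needs uniform integrability of $(V^{\eps_k})_k$ to get an $L^1$ limit, not just $\sup_k\|V^{\eps_k}\|_1<\infty$. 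The standard repair (and the route taken in \cite{part1}) is to choose the regularizing partitions nested in $k$; then after a single rearrangement per $n$ the approximants are step functions on one refining sequence of partitions, the limits $V^{\eps_k}$ form a martingale whose uniform integrability is inherited from the $K$-bounded-tails estimate (since $u\mapsto(|u|-K)_+$ is convex, the tail bound survives averaging and weak limits), and martingale convergence supplies $W\in L^1$ with $\|V^{\eps_k}-W\|_1\to 0$, after which your final triangle inequality is fine. A smaller error: the trivial one-block partition does not bound $\sup_n\|W_n\|_1$, since for signed graphons it only controls $\bigl|\int W_n\bigr|$; fortunately that bound is not needed once $W$ is obtained as above, because $\|W\|_1$ is controlled by the uniform $L^1$ bound $K(1)+1$ on the block averages together with lower semicontinuity of $\|\cdot\|_1$ under cut-norm convergence.
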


\subsection{Equivalent notions of convergence for graphons}
\label{sec:Graphon-Results}

The main theorem of this section, Theorem~\ref{thm:W-conv-equiv} below, is
the analogue of the first four statements of Theorem~\ref{thm:G-conv-equiv}.
To state it, we need the analogue of the microcanonical ground state energies
and microcanonical free energies define in \eqref{Ea(G)-def} and
\eqref{Fa(G)-def}, namely the quantities
\[
\EE_{\ba,\eps}(W,J)=\inf_{\substack{\rho\in\FP_q\\ \|\alpha(\rho)-\ba\|_\infty\leq\eps}}
\EE_\rho(W,J),
\]
and
\begin{equation}
\label{FaepsW-def}
\FF_{\ba,\eps}(W,J)=\inf_{\substack{\rho\in\FP_q\\ \|\alpha(\rho)-\ba\|_\infty\leq\eps}}
\biggl(
\EE_\rho(W,J)-\Ent(\rho)
\biggr).
\end{equation}

The following theorem is the main theorem of this section, and will be proved
in Sections~\ref{sec:Wpf-of-i>ii>iii}, \ref{sec:Proof-E2Dist}, and
\ref{sec:WProof-DtoF} below.

\begin{theorem}
\label{thm:W-conv-equiv} Let $(W_n)_{n \ge 0}$ be a sequence of uniformly
upper regular graphons. Then the following statements are equivalent:
\begin{enumerate}
\item[(i)] $(W_n)_{n \ge 0}$ is a Cauchy sequence in the cut metric
    $\delta_\square$.

\item[(ii)] For every $q \in \N$, the sequence $(\iv\SS_q(W_n))_{n \ge 0}$
    is a Cauchy sequence under the Hausdorff distance $d_1^\Haus$.

  \item[(iii)] For every $q\in\N$, $\ba\in\PD_q $, and symmetric matrix $J
      \in \R^{q \times q}$,
    \[
    \lim_{\eps\to 0}\liminf_{n\to\infty}
    \EE_{\ba,\eps}(W_n,J)
    =
    \lim_{\eps\to 0}\limsup_{n\to\infty}
    \EE_{\ba,\eps}(W_n,J).
    \]

  \item[(iv)] For every $q\in\N$, $\ba\in\PD_q $, and symmetric matrix $J
      \in \R^{q \times q}$,
    \[
    \lim_{\eps\to 0}\liminf_{n\to\infty}
    \FF_{\ba,\eps}(W_n,J)
    =
    \lim_{\eps\to 0}\limsup_{n\to\infty}
    \FF_{\ba,\eps}(W_n,J).
\]
\end{enumerate}
\end{theorem}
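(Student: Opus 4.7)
The plan is to establish the four equivalences through the cycle (i) $\Longrightarrow$ (ii) $\Longrightarrow$ (iii) $\Longrightarrow$ (i), together with (i) $\Longrightarrow$ (iv) $\Longrightarrow$ (iii). The first two implications and the final scaling step use no upper regularity at all; the reverse implication (iii) $\Longrightarrow$ (i) is the main obstacle and relies on the subsequential compactness in Theorem~\ref{thm:UI-to-subsequence} together with a rigidity statement that distinct graphons (modulo measure-preserving rearrangement) are separated by their microcanonical ground state energies.

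For (i) $\Longrightarrow$ (ii), I would show directly that the assignment $W\mapsto\iv\SS_q(W)$ is uniformly continuous from $(\text{graphons},\delta_\square)$ to $(\text{compact subsets of }\SS_q,d_1^\Haus)$. The key technical ingredient is the layer-cake identity $\rho_i(x)=\int_0^1\one_{\rho_i(x)\ge s}\,ds$, which lets one bound $|\int\rho_i(x)\rho_j(y)(W-W^\phi)(x,y)\,dx\,dy|$ by $\|W-W^\phi\|_\square$ uniformly over $\rho\in\FP_q$. Taking the infimum over measure-preserving bijections $\phi$ converts a cut-distance bound into a uniform bound on entries of $W/\rho - W^\phi/\rho$, which yields Hausdorff closeness of the fractional quotient sets. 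For (ii) $\Longrightarrow$ (iii), I would use the identity $\EE_\rho(W,J) = -\langle\beta(W/\rho),J\rangle$ to rewrite
\[
\EE_{\ba,\eps}(W,J) = -\sup\bigl\{\langle\beta,J\rangle : (\alpha,\beta)\in\iv\SS_q(W),\ \|\alpha-\ba\|_\infty\le\eps\bigr\}.
\]
Hausdorff convergence of $\iv\SS_q(W_n)$ to a limit set $\iv\SS_q^\infty$ then gives, for fixed $\eps$, the convergence of these maxima, and the $\eps\to 0$ limit exists by compactness of $\SS_\ba$; the argument is a direct graphon analogue of Theorem~\ref{thm:quotients-to-E}.

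The implication (iii) $\Longrightarrow$ (i) is the delicate step, and I would argue by contradiction. Uniform upper regularity plus Theorem~\ref{thm:UI-to-subsequence} lets me extract, from any subsequence of $(W_n)$, a further subsequence converging in $\delta_\square$ to some graphon. If $(W_n)$ is not Cauchy, two such subsequences produce distinct limits $W,W'$ with $\delta_\square(W,W')>0$. Combined with (i) $\Longrightarrow$ (ii) $\Longrightarrow$ (iii), the assumption (iii) then forces $\EE_\ba(W,J) = \EE_\ba(W',J)$ for every $q,\ba,J$, so the work reduces to proving a rigidity statement: microcanonical ground state energies determine a graphon up to measure-preserving rearrangement. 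This is where the new rearrangement inequalities enter. By testing against coupling matrices $J$ that emphasize large entries of the graphon and comparing decreasing rearrangements along horizontal slices, one can force the sorted profiles of $W$ and $W'$ to coincide, which drives $\delta_\square(W,W')$ to zero. The challenge, and what distinguishes the proof from the $L^\infty$ case in \cite{dense2}, is that subgraph-count arguments are unavailable for $L^p$ graphons, so the rearrangement machinery genuinely replaces polynomial identities.

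For (i) $\Longrightarrow$ (iv) I would prove that $\FF_{\ba,\eps}(\cdot,J)$ is uniformly continuous in $\delta_\square$ along any uniformly upper regular sequence. The energy part $\EE_\rho(W,J)$ is Lipschitz in cut distance by the layer-cake argument above, while $\Ent(\rho)$ is independent of $W$ and so passes through the infimum unchanged; the only subtlety is that the uniform $\eps\to 0$ limit requires the $K$-bounded tails hypothesis to truncate $W$ at large values uniformly in $n$ without affecting the energies by more than a controlled amount. Finally, (iv) $\Longrightarrow$ (iii) is the scaling argument from Lemma~\ref{lem:FatoFetc}(iii): the bound $0\le \FF_{\ba,\eps}(W,J) - \EE_{\ba,\eps}(W,J)\le\log q$, combined with linearity in $J$, yields $\lambda^{-1}\FF_{\ba,\eps}(W,\lambda J)\to \EE_{\ba,\eps}(W,J)$ as $\lambda\to\infty$, transferring convergence from $\FF$ to $\EE$.
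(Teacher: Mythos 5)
Your overall architecture is exactly the paper's: the cycle (i)$\Rightarrow$(ii)$\Rightarrow$(iii)$\Rightarrow$(i) plus (i)$\Rightarrow$(iv)$\Rightarrow$(iii), with (i)$\Rightarrow$(ii) via the Lipschitz bound $d_1(U/\rho,W/\rho)\le q^2\|U-W\|_\square$, (ii)$\Rightarrow$(iii) as a graphon copy of Theorem~\ref{thm:quotients-to-E}, (iii)$\Rightarrow$(i) by compactness (Theorem~\ref{thm:UI-to-subsequence}) plus a rigidity theorem, and (iv)$\Rightarrow$(iii) by the $\lambda^{-1}\FF(\lambda J)$ scaling. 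Those parts are fine, and your reduction of (iii)$\Rightarrow$(i) to the statement ``equal microcanonical ground state energies for all $q,\ba,J$ forces $\delta_\square(W,W')=0$'' is precisely the paper's Theorem~\ref{thm:E-->delta}.

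The gap is in your proof of that rigidity statement. You propose to ``force the sorted profiles of $W$ and $W'$ to coincide, which drives $\delta_\square(W,W')$ to zero.'' Coinciding sorted profiles means only that $W$ and $W'$ have the same distribution as random variables on $[0,1]^2$, and that is strictly weaker than $\delta_\square=0$: for instance, the two-block graphon $\one_{[0,1/2]^2\cup(1/2,1]^2}$ and the bipartite graphon $\one_{\{x\le 1/2<y\}\cup\{y\le 1/2<x\}}$ have identical value distributions but positive cut distance. In the paper, equality of distributions (Lemma~\ref{lem:C-to-distribution}) is only an intermediate step; the actual argument first upgrades the energy equalities to $\CC(W,Y)=\CC(U,Y)$ for \emph{every} bounded graphon $Y$ (Lemma~\ref{lem:E-to-C}, via step-function approximation of $Y$), and then, crucially, applies the rearrangement inequality to the bounded truncations $\wt W=\arctan W$, $\wt U=\arctan U$ to show $\cC(\wt U,\wt W)=\cC(\wt W,\wt W)=\cC(\wt U,\wt U)$, which gives $\delta_2(\wt U,\wt W)=0$ and then $\delta_1(U,W)=0$ by the mean value theorem together with tail control using the common distribution. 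Without some substitute for this step, your sketch does not close the argument, and since it is the one place where uniform upper regularity and the rearrangement machinery genuinely matter, it cannot be waved through. A smaller remark: in (i)$\Rightarrow$(iv) no $K$-bounded-tails truncation is needed; the bound $|\FF_{\ba,\eps}(U,J)-\FF_{\ba,\eps}(W,J)|\le\|J\|_1\,\delta_\square(U,W)$ holds for arbitrary graphons (Proposition~\ref{prop:U2F}), and the $\eps\to0$ limits exist by monotonicity of $\FF_{\ba,\eps}$ in $\eps$.
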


\begin{remark}
\label{rem:W-equiv-conv} \leavevmode
\begin{enumerate}
\item[(i)] We will prove the theorem by showing that (i) $\Rightarrow$ (ii)
    $\Rightarrow$ (iii) $\Rightarrow$ (i) and that (i) $\Rightarrow$ (iv) $\Rightarrow$
    (iii).  It turns out that the assumption of uniform upper regularity is only needed for
    the proof that (iii) $\Rightarrow$ (i). All other implications hold for arbitrary
    sequences of graphons.
\item[(ii)] Under the assumption (ii) of the above theorem, $\iv\SS_q(W_n)$ converges
    to the compact set\footnote{The nonempty compact subsets of $\iv\SS_q$ form a complete
    metric space under $d_1^\Haus$ (see \cite{GBP}).} $\SS_q^\infty=\{(\alpha,\beta)\colon
    d_1((\alpha,\beta),\iv\SS_q(W_n))\to 0\}$. Proceeding as in the proof of
    Theorem~\ref{thm:quotients-to-E}, this in turn implies that (iii) holds with the limit
    given as
 \[
E_\ba(J)=-\max_{\substack{(\alpha,\beta)\in\SS_q^\infty\\ \alpha=\ba}}\langle \beta,J\rangle,
\]
again without the assumption of uniform upper regularity.
\item[(iii)] Under the assumption (i) of the above theorem, the sequences
    $(\EE_\ba(W_n,J))_{n \ge 0}$ and $(\FF_\ba(W_n,J))_{n \ge 0}$ are convergent for
    all $q, \ba, J$. (In particular, the use of $\eps$ in the theorem statement is just for
    comparison with the case of graphs, and not because it is truly needed.) Finally,
    under the assumption of uniform upper regularity, each of these two statements is
    not only necessary but also sufficient for convergence in metric to hold, as we will
    show in Sections~\ref{sec:Proof-E2Dist} and \ref{sec:WProof-DtoF}.
\end{enumerate}
\end{remark}

\subsection{Compactness of quotient space}
\label{sec:compact-quotient}

Before jumping into the proof of Theorem~\ref{thm:W-conv-equiv}, we prove
some compactness results about quotients of $W$, thereby shedding light on
the quantities $\EE_\ba(W, J)$, $\FF_\ba(W, J)$, and $I_q((\alpha,\beta),W)$.

Recall the $\ell_1$ distance $d_1$ from \eqref{d1-general} as well as the
definitions of fractional graphon quotients from
Section~\ref{sec:limit-expressions}.

\begin{prop}\label{prop:S-Closed}
Let $W$ be a graphon, let $q\in\N$, and let $\ba\in\PD_q$. Then $\iv\SS_q(W)$
and $\iv\SS_\ba(W)$ are compact under the metric $d_1$.
\end{prop}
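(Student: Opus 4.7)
The plan is to reduce both assertions to showing that $\iv\SS_q(W)$ is a closed and bounded subset of $\R^{q+q^2}$. Boundedness is immediate from $0 \le \rho_i \le 1$: for any $\rho\in\FP_q$ one has $\alpha_i(W/\rho) = \int_0^1 \rho_i \in [0,1]$, and
\[
|\beta_{ij}(W/\rho)| \leq \int_{[0,1]^2} \rho_i(x)\rho_j(y)|W(x,y)|\,dx\,dy \leq \|W\|_1,
\]
so $\iv\SS_q(W)$ sits in a compact Euclidean box and the real task is closedness.

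To prove closedness, let $(\alpha^{(n)},\beta^{(n)}) = W/\rho^{(n)}$ with $\rho^{(n)} \in \FP_q$ converge in $d_1$ to some $(\alpha,\beta)$; the goal is to exhibit $\rho \in \FP_q$ with $W/\rho = (\alpha,\beta)$. Since each $\rho_i^{(n)}$ lies in the closed unit ball of $L^\infty([0,1]) = L^1([0,1])^*$ and $L^1([0,1])$ is separable, the Banach--Alaoglu theorem supplies a weak$^*$ compact metrizable topology on this ball. Diagonalizing over $i\in[q]$, I pass to a subsequence with $\rho_i^{(n)} \to \rho_i$ in the weak$^*$ topology for every $i$, where $0 \leq \rho_i \leq 1$ a.e. Testing $\sum_i \rho_i^{(n)} = 1$ against any $h \in L^1$ and taking limits yields $\sum_i \rho_i = 1$ a.e., so $\rho \in \FP_q$.

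The main obstacle is verifying that $W/\rho^{(n)} \to W/\rho$ along this subsequence. The $\alpha$-coordinate is trivial: test weak$^*$ convergence against $h \equiv 1 \in L^1$. The $\beta$-coordinate is harder, since the map $\rho \mapsto \int \rho_i(x)\rho_j(y) W(x,y)\,dx\,dy$ is bilinear in two merely weakly converging sequences and $W$ is only $L^1$. I plan to handle this by truncation: set $W_M = W\,\one_{|W|\leq M}$, so $\|W-W_M\|_1 \to 0$ as $M\to\infty$. The tail is bounded by $\|W-W_M\|_1$ uniformly in $n$ because $|\rho_i^{(n)}\rho_j^{(n)}| \leq 1$. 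For the bounded piece, define $g_n(y) := \int_0^1 \rho_i^{(n)}(x) W_M(x,y)\,dx$ and $g(y)$ analogously from $\rho_i$. By Fubini, $W_M(\cdot,y)\in L^1$ for a.e.\ $y$, so weak$^*$ convergence gives $g_n(y)\to g(y)$ a.e., and $|g_n|,|g|\leq M$; dominated convergence yields $g_n \to g$ in $L^1$. Then
\[
\int \rho_j^{(n)} g_n \;=\; \int \rho_j^{(n)}(g_n - g) + \int \rho_j^{(n)} g \;\longrightarrow\; \int \rho_j g,
\]
the first term being bounded by $\|g_n-g\|_1\to 0$ and the second converging by weak$^*$ convergence of $\rho_j^{(n)}$ against $g \in L^1$. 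Sending $M\to\infty$ establishes $\beta^{(n)}\to \beta(W/\rho)$, completing the proof of closedness.

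Finally, compactness of $\iv\SS_\ba(W)$ is automatic: it is the preimage of the closed singleton $\{\ba\}$ under the continuous projection $(\alpha,\beta)\mapsto\alpha$ restricted to the compact set $\iv\SS_q(W)$.
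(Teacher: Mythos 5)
Your proof is correct, but it takes a different technical route from the paper's. The paper first proves a standalone compactness result for fractional partitions (Lemma~\ref{lem:FP-limit}): by a diagonal argument, any sequence in $\FP_q$ has a subsequence converging ``over rational intervals,'' with the limit produced via the measure extension theorem and Radon--Nikodym densities; it then shows (Lemma~\ref{lem:FP-quotients-converge}) that quotients are continuous under this convergence, even when the graphons themselves vary in cut norm, by approximating $W$ in $L^1$ by step functions with rational steps. You instead get the weak limit directly from Banach--Alaoglu (weak$^*$ compactness of the unit ball of $L^\infty=(L^1)^*$), and you handle the bilinear pairing with the merely-$L^1$ graphon by truncation to $W_M$, Fubini, and dominated convergence, using $|\rho^{(n)}_i|\le 1$ to control the tail uniformly in $n$ -- which is sound. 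The two routes are morally the same (convergence over rational intervals of uniformly bounded densities is just weak$^*$ convergence in disguise), but yours is a more compact functional-analytic argument, while the paper's formulation is more elementary and, importantly, its two lemmas are reused later: Lemma~\ref{lem:FP-quotients-converge} with varying $W_n\to W$ and Lemma~\ref{lem:FP-limit} feed into the lower semicontinuity of $I_q$ (Proposition~\ref{prop:I_q-lower-semicontinuous}) and the free-energy arguments, which your ad hoc argument would not directly supply. Two cosmetic points: after passing to the weak$^*$ limit you only get $0\le\rho_i\le 1$ and $\sum_i\rho_i=1$ almost everywhere, so you should (as the paper does) modify $\rho$ on a null set to meet the everywhere definition of $\FP_q$; and your final step is a three-term estimate in which the two truncation errors are bounded by $\|W-W_M\|_1$ uniformly in $n$ -- worth stating explicitly, though it is clear from what you wrote.
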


We will prove this proposition after we develop a few preliminaries.

In \eqref{EaW-def}--\eqref{IW-def}, $\EE_\ba(W, J)$, $\FF_\ba(W, J)$, and
$I_q((\alpha,\beta),W)$ were originally defined as infima over some subset of
fractional partitions. We will see that the infima are attained by some
fractional partitions, so that the ``inf'' can be replaced by ``min''.

Furthermore, we will see that
\begin{align}
\label{eq:E-W-eps->0} \EE_\ba(W, J) &= \lim_{\eps \to 0} \EE_{\ba,\eps}(W,J), \\
\label{eq:F-W-eps->0} \FF_\ba(W, J) &= \lim_{\eps \to 0} \FF_{\ba,\eps}(W,J), \qquad \text{and} \\
\label{eq:I-W-eps->0}  I_q((\alpha,\beta), W) &= \lim_{\eps \to 0} \inf_{\substack{\rho
    \in \FP_q \\ d_1(W/\rho, (\alpha,\beta)) \leq \eps}} (\log q - \Ent(\rho)).
\end{align}
The quantities $\EE_\ba(W,J)$ and $\FF_\ba(W,J)$ are both continuous with
respect to $\ba, W, J$. On the other hand, $I_q((\alpha,\beta), W)$ is lower
semicontinuous in its arguments
(Proposition~\ref{prop:I_q-lower-semicontinuous}), and it is not continuous
(it takes values in $[0, \log q]$ when $(\alpha,\beta) \in \iv\SS_q(W)$ and
is infinite otherwise).

It follows as an immediate corollary of Proposition~\ref{prop:S-Closed} that
\[
\EE(W,J,h)
= -\max_{(\alpha,\beta)\in\iv\SS_q(W)} \bigl(\langle \beta,J\rangle +
\langle \alpha, h \rangle \bigr)
\]
and
\begin{equation}\label{Ea-frac-def}
\EE_\ba(W,J)
= -\max_{(\alpha,\beta)\in\iv\SS_\ba(W)}\langle \beta,J\rangle,
\end{equation}
since $(\alpha,\beta)\mapsto \langle \beta,J\rangle$ and
$(\alpha,\beta)\mapsto \langle \alpha,h \rangle$ are continuous in the $d_1$
metric. This gives an alternate representation of ground state energies of
$W$ in terms of its quotients.

\subsubsection{Approximations by step functions}

One way to approximate a graphon $W$ by step functions is given by the
following lemma, which is an immediate consequence of the almost everywhere
differentiability of the integral function.

\begin{lemma}
\label{lem:as-approx} Let $p\geq 1$. For a positive integer $n$, let $\PP_n$
be the partition of $[0,1]$ into consecutive intervals of length $1/n$. If
$W$ is a graphon, then $W_{\PP_n}\to W$ almost everywhere. In addition,
$W_{\PP_n}\to W$ in $L^p$ whenever $W$ is an $L^p$ graphon.
\end{lemma}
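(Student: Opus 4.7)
The plan is to handle the two assertions separately. The almost-everywhere convergence is the two-dimensional Lebesgue differentiation theorem applied to the specific family of partitions $\PP_n\times\PP_n$, and the $L^p$ convergence follows by combining the a.e.\ statement with the contraction inequality \eqref{CONTRACT} via a density argument with continuous functions.

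For the almost-everywhere statement, I would recall that any graphon $W$ is in $L^1([0,1]^2)$, so by the Lebesgue differentiation theorem, for almost every $(x,y)\in[0,1]^2$ one has
\[
\frac{1}{|E_r|}\int_{E_r} W(u,v)\,du\,dv \;\longrightarrow\; W(x,y)
\]
as $r\to 0$, provided the sets $E_r$ contain $(x,y)$, are contained in a ball of radius $r$ around $(x,y)$, and have Lebesgue measure bounded below by a fixed positive multiple of $r^2$ (the bounded-eccentricity condition). For each $n$, the unique cell $I_i\times I_j$ of $\PP_n\times\PP_n$ containing $(x,y)$ is a square of side $1/n$, hence is contained in the closed ball of radius $\sqrt{2}/n$ around $(x,y)$ and has area $1/n^2$, which is a fixed positive constant times the area of that ball. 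So the hypothesis is satisfied with $r=\sqrt 2/n$, yielding $W_{\PP_n}(x,y)\to W(x,y)$ for almost every $(x,y)$.

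For the $L^p$ statement, I would use a density-plus-contraction argument. Fix $\eps>0$ and, using the density of continuous functions in $L^p([0,1]^2)$, choose a continuous $U\colon [0,1]^2\to\R$ with $\|W-U\|_p<\eps$. Since $U$ is uniformly continuous on the compact square, $U_{\PP_n}\to U$ uniformly, hence in $L^p$. Combining the triangle inequality with the contractivity of $W\mapsto W_\PP$ on $L^p$ from \eqref{CONTRACT}, applied to $W-U$, gives
\[
\|W-W_{\PP_n}\|_p \;\le\; \|W-U\|_p + \|U-U_{\PP_n}\|_p + \|(W-U)_{\PP_n}\|_p \;\le\; 2\eps + \|U-U_{\PP_n}\|_p,
\]
which is at most $3\eps$ for $n$ large enough. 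Since $\eps$ was arbitrary, $\|W-W_{\PP_n}\|_p\to 0$.

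No serious obstacle is expected, since both steps are standard. The only point that deserves a brief check is the bounded-eccentricity hypothesis in the Lebesgue differentiation step, which holds with a uniform constant because the partition cells are axis-aligned squares of a single size. An alternative, should one wish to avoid the eccentricity discussion, is to first establish convergence along the subsequence $n=2^k$ using the martingale convergence theorem for the conditional expectations $W_{\PP_{2^k}\times\PP_{2^k}} = \E[W\mid\FF_k]$ (which is nested), and then deduce the full-sequence statement by sandwiching $\PP_n$ between $\PP_{2^k}$ and $\PP_{2^{k+1}}$ where $2^k\le n<2^{k+1}$.
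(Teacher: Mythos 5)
Your proof is correct, and the almost-everywhere half coincides with the paper's (which simply cites the Lebesgue differentiation theorem; your verification of the bounded-eccentricity condition for the squares of $\PP_n\times\PP_n$ is exactly the point being implicitly used). For the $L^p$ half you take a slightly different route: the paper also uses the triangle inequality together with the contraction property \eqref{CONTRACT}, but its approximant is the truncation $W_M=W\one_{|W|\leq M}$ rather than a continuous function; it then sends $M\to\infty$ to kill the tail term and, for fixed $M$, deduces $\|W_M-(W_M)_{\PP_n}\|_p\to 0$ from the already-proved a.e.\ convergence via bounded convergence. Your version instead invokes density of continuous functions in $L^p$ and uniform continuity to get $U_{\PP_n}\to U$ uniformly; this makes the $L^p$ statement independent of the a.e.\ statement, at the cost of importing the density theorem, whereas the paper's truncation argument stays entirely within the graphon setting and reuses the first half. (Both arguments, like the lemma itself, implicitly take $p<\infty$; and if you want $U$ to be a graphon you can symmetrize it, which only improves $\|W-U\|_p$.) Either way the proof is complete.
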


\begin{proof}
Almost everywhere convergence follows the Lebesgue differentiation theorem.
To get convergence in $L^p$ we approximate $W$ by the bounded graphon
$W_M=W\one_{|W|\leq M}$, where $M > 0$. By triangle inequality and
\eqref{CONTRACT},
\[
\begin{aligned}
\|W-W_{\PP_n}\|_p&\leq \|W_M-(W_M)_{\PP_n}\|_p
+\|W-W_M\|_p+\|(W-W_M)_{\PP_n}\|_p
\\
&\leq \|W_M-(W_M)_{\PP_n}\|_p
+2\|W-W_M\|_p.
\end{aligned}
\]
The second term on the right can be made arbitrarily small by setting $M$ to
be sufficiently large, and for any fixed $M$, the first term on the right
goes to zero as $n\to \infty$. This shows that $\|W-W_{\PP_n}\|_p \to 0$.
\end{proof}

The lemma does not, however, give any information on the speed of
convergence. If instead of almost everywhere convergence we content ourselves
with convergence in the cut metric, the situation is different, as is well
known in the case of $L^2$ graphons, where one can apply the weak version of
the regularity lemma first established in \cite{FK}. This lemma can be
generalized to $L^p$ graphons for $p>1$ and more generally to any graphon
with $K$-bounded tails (see \cite{part1}), but we will not need this here,
where we use only the corresponding version for uniformly upper regular
sequences of graphs (see Theorem~\ref{thm:weak-regular-for-graphs} in
Section~\ref{sec:graphs->graphons}).

\subsubsection{Limits of fractional partitions}

We say that a sequence $\rho^{(n)} \in \FP_q$ of fractional partitions
converges to $\rho \in \FP_q$ \emph{over rational intervals} if
$\int_D\rho_i^{(n)}(x)\,dx\to \int_D\rho_i(x)\,dx$ for every interval
$D\subseteq [0,1]$ with rational endpoints.

\begin{lemma} \label{lem:FP-limit}
Fix $q \in \N$. Every sequence $\rho^{(n)} \in \FP_q$ of fractional
partitions contains a subsequence that converges to some $\rho \in \FP_q$
over rational intervals.
\end{lemma}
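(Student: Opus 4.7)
\emph{Plan.} The idea is a weak compactness argument in $L^2([0,1])$. Since each component $\rho_i^{(n)}$ takes values in $[0,1]$, the sequence lies in the unit ball of $L^2([0,1])$. This ball is sequentially weakly compact because $L^2$ is a separable Hilbert space, so applying this fact $q$ times (a finite diagonal extraction over the indices $i \in [q]$) I obtain a single subsequence along which $\rho_i^{(n)}$ converges weakly in $L^2$ to some $\rho_i \in L^2([0,1])$ for every $i \in [q]$ simultaneously.

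Next I verify that $\rho = (\rho_1,\dots,\rho_q)$ is a fractional $q$-partition. For any measurable set $A \subseteq [0,1]$, the indicator $\one_A$ lies in $L^2$, so weak convergence gives
\[
\int_A \rho_i(x)\,dx = \lim_{n\to\infty} \int_A \rho_i^{(n)}(x)\,dx.
\]
From $0 \le \int_A \rho_i^{(n)}\,dx \le \lambda(A)$ I deduce $0 \le \int_A \rho_i\,dx \le \lambda(A)$ for every measurable $A$, which forces $\rho_i(x) \in [0,1]$ for almost every $x$ (otherwise the set on which $\rho_i > 1$ or $\rho_i < 0$ would give a contradiction when used as the test set $A$). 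Similarly, the identity $\int_A \sum_i \rho_i^{(n)}\,dx = \lambda(A)$ passes to the limit and yields $\sum_i \rho_i = 1$ almost everywhere. After modifying $\rho$ on a null set, it is a genuine element of $\FP_q$.

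Finally, convergence over rational intervals is automatic: every interval $D \subseteq [0,1]$ with rational endpoints has $\one_D \in L^2$, so weak convergence delivers $\int_D \rho_i^{(n)}\,dx \to \int_D \rho_i\,dx$ for each $i$. The only delicate step is the second paragraph --- verifying that the pointwise bounds $0 \le \rho_i \le 1$ and the sum-to-one condition are preserved by the weak limit --- but this is handled cleanly by testing against indicators of arbitrary measurable sets. Everything else is standard, and no sparse-graph-specific machinery is required.
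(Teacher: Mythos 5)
Your proof is correct, but it takes a genuinely different route from the paper. You extract the limit by weak sequential compactness of the unit ball of $L^2([0,1])$ (applied coordinatewise with a finite diagonal extraction), so the limiting functions $\rho_i$ exist from the outset as weak limits, and you then verify membership in $\FP_q$ by testing against indicators of arbitrary measurable sets; convergence over rational intervals is then immediate since $\one_D \in L^2$. The paper instead runs a countable diagonal argument directly on the numbers $\int_D \rho_i^{(n)}\,dx$ over rational-endpoint intervals $D$, and only afterwards manufactures the limit functions: the limiting interval function $\mu_i$ is extended to a measure on $[0,1]$ via the measure extension theorem, observed to be absolutely continuous with respect to Lebesgue measure, and $\rho_i$ is taken to be its Radon--Nikodym density. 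Your functional-analytic packaging avoids the extension-theorem/Radon--Nikodym step and in fact yields more than is asked (convergence against every $L^2$ test function, not just rational intervals), at the cost of invoking weak compactness in Hilbert space; the paper's argument is more elementary in its ingredients (Bolzano--Weierstrass plus basic measure theory) but has to do the extra work of building the limit object. One small point to keep in mind: after modifying $\rho$ on a null set to enforce the everywhere conditions $\rho_i(x)\in[0,1]$ and $\sum_i\rho_i(x)=1$, the integrals over intervals are unchanged, so the claimed convergence is unaffected --- you assert this implicitly, and it is worth a half-sentence, but it is not a gap.
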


\begin{proof}
By restricting to a subsequence, we may assume that $\int_D\rho_i^{(n)}(x)
\,dx$ converges for every interval $D \subseteq [0,1]$ with rational
endpoints, and let us denote this limiting value by $\mu_i(D)$. Then, by the
extension theorem for measures (Proposition~5.5 in \cite{Knapp}), $\mu_i$ can
be extended to a measure on $[0,1]$ such that $\sum_{i \in
[q]}\mu_i(D)=\lambda(D)$ for every measurable $D\subseteq [0,1]$, where
$\lambda$ is the Lebesgue measure. It is easy to see that $\mu_i$ is
absolutely continuous with respect to $\lambda$. Defining $\rho_i$ to be the
density of $\mu_i$ with respect to $\lambda$ and changing $\rho_i$ on a set
of measure zero, we obtain the desired fractional partition
$\rho=(\rho_i)_{i\in [q]}$.
\end{proof}

\begin{lemma} \label{lem:FP-quotients-converge}
  If $\rho^{(n)} \in \FP_q$ converges to $\rho \in \FP_q$ over rational
  intervals and $\norm{W_n - W}_\square\to 0$, then $d_1(W_n/\rho^{(n)}, W/\rho)
  \to 0$.
\end{lemma}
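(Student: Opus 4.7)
The plan is to prove componentwise convergence of the two parts of the quotient, namely $\alpha_i(W_n/\rho^{(n)})\to\alpha_i(W/\rho)$ and $\beta_{ij}(W_n/\rho^{(n)})\to\beta_{ij}(W/\rho)$ for all $i,j\in[q]$; these together give $d_1(W_n/\rho^{(n)},W/\rho)\to 0$. The $\alpha$-components are immediate: by definition $\alpha_i(W_n/\rho^{(n)})=\int_{[0,1]}\rho_i^{(n)}(x)\,dx$, which is exactly the integral over the rational interval $[0,1]$ and thus converges to $\int_{[0,1]}\rho_i(x)\,dx=\alpha_i(W/\rho)$ by hypothesis.

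For the $\beta$-components, I would split
\[
\beta_{ij}(W_n/\rho^{(n)})-\beta_{ij}(W/\rho)
=\int_{[0,1]^2}\rho_i^{(n)}(x)\rho_j^{(n)}(y)\bigl(W_n-W\bigr)(x,y)\,dx\,dy
+\int_{[0,1]^2}\bigl(\rho_i^{(n)}(x)\rho_j^{(n)}(y)-\rho_i(x)\rho_j(y)\bigr)W(x,y)\,dx\,dy
\]
and handle the two pieces separately. The first piece is controlled by the cut norm: for any measurable $\phi,\psi\colon[0,1]\to[0,1]$ and any graphon $U$, a standard layer cake argument gives
\[
\Bigl|\int\phi(x)\psi(y)U(x,y)\,dx\,dy\Bigr|
=\Bigl|\int_0^1\!\!\int_0^1\int_{\{\phi>s\}\times\{\psi>t\}}U(x,y)\,dx\,dy\,ds\,dt\Bigr|
\le\|U\|_\square.
\]
Applying this with $\phi=\rho_i^{(n)}$, $\psi=\rho_j^{(n)}$, $U=W_n-W$ bounds the first piece by $\|W_n-W\|_\square$, which tends to $0$ by assumption.

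The second piece is the delicate one, since $W$ is only $L^1$ and the hypothesis on $\rho^{(n)}$ is a rather weak weak-star-type convergence. My strategy is to approximate $W$ in $L^1$ by a step function with rational steps: given $\eps>0$, use Lemma~\ref{lem:as-approx} applied to the partition $\PP_N$ of $[0,1]$ into $N$ equal rational subintervals to find $\tilde W=W_{\PP_N\times\PP_N}=\sum_{k,l}c_{kl}\one_{D_k\times D_l}$, with $D_k,D_l$ rational intervals, such that $\|W-\tilde W\|_1<\eps$. Since $\rho_i^{(n)},\rho_j^{(n)},\rho_i,\rho_j$ all take values in $[0,1]$, replacing $W$ by $\tilde W$ in the second piece introduces an error of at most $2\eps$. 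For the step function $\tilde W$,
\[
\int\rho_i^{(n)}(x)\rho_j^{(n)}(y)\tilde W(x,y)\,dx\,dy
=\sum_{k,l}c_{kl}\Bigl(\int_{D_k}\rho_i^{(n)}\Bigr)\Bigl(\int_{D_l}\rho_j^{(n)}\Bigr),
\]
which converges to $\sum_{k,l}c_{kl}(\int_{D_k}\rho_i)(\int_{D_l}\rho_j)=\int\rho_i\rho_j\tilde W$ because $D_k,D_l$ have rational endpoints. Letting $n\to\infty$ and then $\eps\to 0$ shows the second piece vanishes, completing the proof.

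The main obstacle is the second piece: one cannot hope for $\rho_i^{(n)}\rho_j^{(n)}\to\rho_i\rho_j$ in $L^1$ (products of weakly convergent sequences need not converge weakly), so the proof must route through $L^1$ approximation of $W$ by step functions whose steps lie in the algebra generated by rational intervals, which is exactly where the ``convergence over rational intervals'' hypothesis can be used.
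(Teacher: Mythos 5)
Your proposal is correct and follows essentially the same route as the paper: the same decomposition of $\beta_{ij}(W_n/\rho^{(n)})-\beta_{ij}(W/\rho)$ into a cut-norm term $\beta_{ij}(W_n/\rho^{(n)})-\beta_{ij}(W/\rho^{(n)})$ and a term handled by approximating $W$ in $L^1$ by step functions with rational steps (Lemma~\ref{lem:as-approx}) together with convergence over rational intervals. The only difference is that you spell out the layer-cake justification of the bound by $\|W_n-W\|_\square$ and the $\eps$-bookkeeping that the paper leaves implicit.
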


\begin{proof}
  We have $\alpha_i(\rho^{(n)}) = \int_{[0,1]}\rho_i^{(n)} \to
  \int_{[0,1]} \rho_i = \alpha_i(\rho)$. Next we have
\begin{align*}
\abs{\beta_{ij}(W_n/\rho^{(n)})  - \beta_{ij}(W/\rho^{(n)})}
&=\abs{\int \rho_i^{(n)}(x)\rho_j^{(n)}(y) (W_n(x,y) - W(x,y)) \,dx\,dy}
\\
& \leq \norm{W_n - W}_\square.
\end{align*}
It remains to show that $\beta_{ij}(W/\rho^{(n)}) \to \beta_{ij}(W/\rho)$,
i.e.,
 \[
 \int \rho_i^{(n)}(x)\rho_j^{(n)}(y) W(x,y) \,dx\,dy \to
  \int \rho_i(x)\rho_j(y) W(x,y) \,dx\,dy,
 \]
 which follows from Lemma~\ref{lem:as-approx} as we can approximate $W$ arbitrarily well
in $L^1$ using step functions with rational steps, and $\rho^{(n)}$ converges
to $\rho$ over rational intervals.
\end{proof}

Now we can prove the compactness of the set of quotients.

\begin{proof}[Proof of Proposition~\ref{prop:S-Closed}]
Let $(W/\rho^{(n)})_{n\geq 1}$ be a sequence of quotients in $\iv\SS_q(W)$
(or $\iv\SS_\ba(W)$). By Lemma~\ref{lem:FP-limit} we can restrict to a
subsequence so that $\rho^{(n)}$ converges to some $\rho \in \FP_q$ over
rational intervals. By Lemma~\ref{lem:FP-quotients-converge}, we have
$d_1(W/\rho^{(n)},W/\rho)\to 0$, thereby proving that the space of quotients
is closed and hence compact.
\end{proof}

The claim \eqref{eq:E-W-eps->0} has a similar proof: we have $\EE_{\rho}(W,J)
= - \langle \beta(W/\rho), J \rangle$, so that $d_1(W/\rho^{(n)},W/\rho)\to
0$ implies $\EE_{\rho^{(n)}}(W,J) \to \EE_{\rho}(W,J)$.

\subsubsection{Entropy and lower semicontinuity}

Now we prove \eqref{eq:F-W-eps->0} and \eqref{eq:I-W-eps->0}, and furthermore
the claim that in the definitions \eqref{FaW-def} and \eqref{IW-def} for
$\FF_\ba(W,J)$ and $I_q((\alpha,\beta),W)$ the infimum is attained by some
fractional partition. In fact, they are all immediate consequences of
Lemma~\ref{lem:FP-limit} along with the following lemma.

\begin{lemma} \label{lem:Ent-convexity}
If $\rho^{(n)} \in \FP_q$ converges to $\rho \in \FP_q$ over rational
intervals, then
  \[
  \limsup_{n \to \infty} \Ent(\rho^{(n)})
  \leq \Ent(\rho).
  \]
\end{lemma}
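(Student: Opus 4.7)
The plan is to exploit the concavity of $t \mapsto -t\log t$, which makes entropy monotone non-decreasing under averaging. Concretely, for any finite partition $\PP = (Y_1,\ldots,Y_m)$ of $[0,1]$ into measurable sets, applying Jensen's inequality to each cell gives, for every $\sigma \in \FP_q$,
\[
  \Ent(\sigma_\PP)
  = \sum_{k=1}^m \lambda(Y_k)\,\Ent_{Y_k}(\sigma_\PP)
  \geq \sum_{k=1}^m \int_{Y_k} \Ent_x(\sigma)\,dx
  = \Ent(\sigma),
\]
where $\sigma_\PP$ denotes the fractional partition obtained by replacing each $\sigma_i$ by its average over each cell $Y_k$. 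This is the direction we want: $\Ent$ is \emph{upper} semicontinuous with respect to coarsening.

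Next I would restrict attention to partitions $\PP$ of $[0,1]$ into finitely many intervals with rational endpoints, so that the hypothesis on $\rho^{(n)} \to \rho$ over rational intervals applies. For such a fixed $\PP$, the step function $\rho^{(n)}_\PP$ is determined on each cell $Y_k$ by the vector of averages $\bigl(\lambda(Y_k)^{-1}\int_{Y_k}\rho_i^{(n)}\bigr)_{i \in [q]}$, which by assumption converges to the corresponding vector for $\rho_\PP$. Since $t \mapsto -t\log t$ is continuous and bounded on $[0,1]$, this gives $\Ent(\rho^{(n)}_\PP) \to \Ent(\rho_\PP)$ as $n \to \infty$. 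Combining with Jensen,
\[
  \limsup_{n\to\infty} \Ent(\rho^{(n)})
  \leq \limsup_{n\to\infty} \Ent(\rho^{(n)}_\PP)
  = \Ent(\rho_\PP).
\]

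Finally I would let $\PP$ range over a sequence of increasingly fine dyadic (or simply $1/m$-equal) partitions into intervals with rational endpoints. By Lemma~\ref{lem:as-approx} (applied componentwise to the functions $\rho_i$, which are bounded and measurable), $\rho_{i,\PP} \to \rho_i$ almost everywhere as the mesh goes to zero; hence $\Ent_x(\rho_\PP) \to \Ent_x(\rho)$ for almost every $x$. Since $0 \leq \Ent_x \leq \log q$, the bounded convergence theorem yields $\Ent(\rho_\PP) \to \Ent(\rho)$. Combined with the previous display, this gives $\limsup_n \Ent(\rho^{(n)}) \leq \Ent(\rho)$, as desired.

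There is no real obstacle here; the only point requiring care is to apply Jensen in the correct direction (entropy goes \emph{up} under averaging) and to pick partitions into rational intervals so that the hypothesis of convergence over rational intervals can be used directly. The argument recovers, as a by-product, the standard fact that $\Ent$ is the limit of its step-function approximations from above.
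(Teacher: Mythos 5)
Your proof is correct and follows essentially the same route as the paper's: average $\rho^{(n)}$ and $\rho$ over a uniform partition into rational intervals, use Jensen's inequality (entropy increases under averaging) together with the hypothesis of convergence over rational intervals to get $\limsup_n \Ent(\rho^{(n)}) \le \Ent(\rho_\PP)$ for fixed $\PP$, and then send the mesh to zero via Lebesgue differentiation and bounded convergence. No gaps.
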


\begin{proof}
For any positive integer $k$, let $\rho_{[k]} \in \FP_q$ (and similarly
$\rho^{(n)}_{[k]}$) denote the fractional partition obtained from
$\rho_{[k]}$ by averaging over the interval $[(j-1)/k, j/k)$ for each integer
$j \in [k]$. Specifically, we set the value of $\rho_{[k],i}$ on $[(j-1)/k,
j/k)$ to be $k \int_{(j-1)/k}^{j/k} \rho_{i}(x) \,dx$.

Since $-x \log x$ is concave, $\Ent(\rho^{(n)}) \leq \Ent(\rho^{(n)}_{[k]})$
by Jensen's inequality. For a fixed $k$ we have
\[
  \limsup_{n \to \infty} \Ent(\rho^{(n)})
  \leq  \limsup_{n \to \infty} \Ent(\rho^{(n)}_{[k]})
  = \Ent(\rho_{[k]})
\]
where the last equality follows from $\rho^{(n)}$ converging to $\rho$ on
rational intervals. Finally, we have $\rho_{[k],i} \to \rho_i$ almost
everywhere  as $k \to \infty$ by the Lebesgue differentiation theorem, and thus
$\Ent(\rho_{[k]}) \to \Ent(\rho)$ as $k \to \infty$ by the bounded
convergence theorem. This proves the lemma.
\end{proof}

\begin{prop} \label{prop:I_q-lower-semicontinuous}
Let $q \in \N$. The function $I_q((\alpha, \beta), W)$ is lower
semicontinuous (with the metric $d_1$ on the first argument and
$\delta_\square$ on the second).
\end{prop}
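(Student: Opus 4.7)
The plan is to prove directly that if $(\alpha^{(n)},\beta^{(n)}) \to (\alpha,\beta)$ in $d_1$ and $\delta_\square(W_n, W) \to 0$, then $\liminf_n I_q((\alpha^{(n)},\beta^{(n)}), W_n) \geq I_q((\alpha,\beta), W)$. First, by passing to a subsequence, I would assume the liminf is realized as an actual limit $L$; if $L = +\infty$ there is nothing to show, so assume $L < \infty$, which forces $(\alpha^{(n)},\beta^{(n)}) \in \iv\SS_q(W_n)$ for large $n$. Using the attainment of the infimum in the definition \eqref{IW-def} (which itself is to be obtained by combining Lemma~\ref{lem:FP-limit} with Lemma~\ref{lem:Ent-convexity} applied to an optimizing sequence), I would select $\rho^{(n)} \in \FP_q$ with $W_n/\rho^{(n)} = (\alpha^{(n)}, \beta^{(n)})$ and $\log q - \Ent(\rho^{(n)}) = I_q((\alpha^{(n)}, \beta^{(n)}), W_n)$.

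The second step converts $\delta_\square$-convergence into honest $\|\cdot\|_\square$-convergence by pulling everything back through measure-preserving bijections. Choose measure-preserving bijections $\psi_n$ of $[0,1]$ with $\|W_n^{\psi_n} - W\|_\square \to 0$, and set $\tilde\rho^{(n)}_i = \rho^{(n)}_i \circ \psi_n$. A change-of-variables argument using that $\psi_n$ preserves Lebesgue measure shows $\alpha_i(\tilde\rho^{(n)}) = \alpha_i(\rho^{(n)})$, $\Ent(\tilde\rho^{(n)}) = \Ent(\rho^{(n)})$, and $W_n^{\psi_n}/\tilde\rho^{(n)} = W_n/\rho^{(n)} = (\alpha^{(n)},\beta^{(n)})$. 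Next I would apply Lemma~\ref{lem:FP-limit} to extract a further subsequence along which $\tilde\rho^{(n)} \to \rho$ over rational intervals for some $\rho \in \FP_q$. Then Lemma~\ref{lem:FP-quotients-converge}, applied to the sequence $W_n^{\psi_n} \to W$ in cut norm together with $\tilde\rho^{(n)} \to \rho$, yields $d_1(W_n^{\psi_n}/\tilde\rho^{(n)}, W/\rho) \to 0$; combining this with $(\alpha^{(n)},\beta^{(n)}) \to (\alpha,\beta)$ forces $W/\rho = (\alpha,\beta)$, so $\rho$ is admissible in the definition of $I_q((\alpha,\beta), W)$.

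To close, Lemma~\ref{lem:Ent-convexity} gives $\limsup_n \Ent(\tilde\rho^{(n)}) \leq \Ent(\rho)$, hence
\[
L = \lim_n \bigl(\log q - \Ent(\rho^{(n)})\bigr) \geq \log q - \Ent(\rho) \geq I_q((\alpha,\beta), W),
\]
as required. The main obstacle is not conceptual but technical: one must check that the two quantities entering $I_q$, namely the quotient $W/\rho$ and the entropy $\Ent(\rho)$, are genuinely invariant under measure-preserving rearrangements, so that $\delta_\square$-convergence on the graphon argument can be replaced without loss by $\|\cdot\|_\square$-convergence of suitable rearrangements, after which the result drops out of Lemmas~\ref{lem:FP-limit}, \ref{lem:FP-quotients-converge}, and \ref{lem:Ent-convexity} already developed in this subsection.
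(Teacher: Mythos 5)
Your proposal is correct and follows essentially the same route as the paper: pass to a subsequence realizing the liminf, reduce to the case $\liminf<\infty$, take optimizing fractional partitions $\rho^{(n)}$, and combine Lemma~\ref{lem:FP-limit}, Lemma~\ref{lem:FP-quotients-converge}, and Lemma~\ref{lem:Ent-convexity}. The only difference is that you spell out the pullback by (near-optimal) measure-preserving bijections and the invariance of $\alpha(\rho)$, $\Ent(\rho)$, and $W/\rho$, which the paper compresses into the single remark that $I_q$ is invariant under measure-preserving bijections of the graphon, so that one may assume $\|W_n-W\|_\square\to 0$.
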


\begin{proof}
We need to show that if $(\alpha^{(n)},\beta^{(n)}) \to (\alpha,\beta)$ in
$d_1$ and $W_n \to W$ in $\delta_\square$ as $n \to \infty$, then
\[
  \liminf_{n \to \infty} I_q((\alpha^{(n)}, \beta^{(n)}),W_n) \geq I_q((\alpha,\beta),W).
\]
We may restrict to a subsequence so that
$I_q((\alpha^{(n)},\beta^{(n)}),W_n)$ converges to the original $\liminf$.
Since $I_q$ is invariant under measure preserving bijections for the graphon,
we may assume that $\norm{W_n - W}_\square \to 0$. The result is automatic if
the limit is infinity, so we might as well assume that
$I_q((\alpha^{(n)},\beta^{(n)}),W_n) < \infty$ (and hence at most $\log q$)
for all $n$, so that there is some $\rho^{(n)} \in \FP_q$ with $W/\rho^{(n)}
= (\alpha^{(n)},\beta^{(n)})$ and $I_q((\alpha^{(n)}, \beta^{(n)}),W_n) =
\log q - \Ent(\rho^{(n)})$. By Lemma~\ref{lem:FP-limit} we can further
restrict to a subsequence so that $\rho^{(n)}$ converges to some $\rho \in
\FP_q$ over rational intervals. By Lemma~\ref{lem:FP-quotients-converge} we
have $W/\rho = \lim_{n \to \infty} W_n/\rho^{(n)} = \lim_{n \to \infty}
(\alpha^{(n)},\beta^{(n)}) = (\alpha,\beta)$, so $I_q((\alpha,\beta),W) \leq
\log q - \Ent(\rho)$. By Lemma~\ref{lem:Ent-convexity},
\[
\begin{aligned}
  \liminf_{n \to \infty} I_q((\alpha^{(n)}, \beta^{(n)}),W_n)
  &= \liminf_{n\to\infty} (\log q - \Ent(\rho_n))
  \\
  &\geq \log q - \Ent(\rho)
  \geq I_q((\alpha,\beta),W),
  \end{aligned}
\]
as desired.
\end{proof}

\subsection{Proof of (i)$\Rightarrow$(ii)$\Rightarrow$(iii) in Theorem~\ref{thm:W-conv-equiv}}
\label{sec:Wpf-of-i>ii>iii}

The claim that (i) implies (ii) follows from Lemma~\ref{lem:U-W-Haus} below. The
claim that (ii) implies (iii)---with the limit expressed as described in
Remark~\ref{rem:W-equiv-conv}(ii)---is essentially
identical to the proof of Theorem~\ref{thm:quotients-to-E} from
Section~\ref{sec:proof-quotients-to-E} and is left to the reader.

\begin{lemma}\label{lem:U-W-Haus}
Let $q\in\N$, $U$ and $W$ be graphons, and $\rho \in \FP_q$. Then
$d_1(U/\rho,W/\rho)\leq q^2\|U-W\|_\square$ and hence
\begin{equation}
  \label{U2S3}
 d^\Haus_1(\iv\SS_{{q}}(U),\iv\SS_{{q}}(W))
  \le q^2\delta_\square(U,W).
\end{equation}
\end{lemma}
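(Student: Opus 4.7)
The plan is to first establish the pointwise inequality $d_1(U/\rho, W/\rho) \leq q^2 \|U-W\|_\square$, and then derive the Hausdorff bound from it using invariance of $\iv\SS_q$ under measure-preserving bijections.

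For the first step, I would observe that the $\alpha$-components of $U/\rho$ and $W/\rho$ agree (both equal $\alpha(\rho)$), so only the $\beta$-components contribute to $d_1$. For each pair $i,j \in [q]$,
\[
\beta_{ij}(U/\rho) - \beta_{ij}(W/\rho) = \int_{[0,1]^2} \rho_i(x)\rho_j(y)\bigl(U(x,y)-W(x,y)\bigr)\,dx\,dy.
\]
The key auxiliary fact I would invoke is that for any measurable $f,g\colon[0,1]\to[0,1]$ and any graphon $V$, one has $\bigl|\int f(x)g(y)V(x,y)\,dx\,dy\bigr| \leq \|V\|_\square$. This follows from the layer-cake decomposition $f=\int_0^1 \one_{f\geq s}\,ds$, $g=\int_0^1 \one_{g\geq t}\,dt$, which expresses the integral as an average over cut-norm test integrals against indicators of sublevel sets. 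Applied with $f=\rho_i$, $g=\rho_j$, $V=U-W$, this yields $|\beta_{ij}(U/\rho)-\beta_{ij}(W/\rho)|\leq \|U-W\|_\square$. Summing over the $q^2$ pairs gives the claimed bound.

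For the Hausdorff bound, I would first note the invariance $\iv\SS_q(W^\phi)=\iv\SS_q(W)$ for any measure-preserving bijection $\phi$: the map $\rho\mapsto \rho\circ\phi^{-1}$ is a bijection on $\FP_q$, and a change of variables shows $W^\phi/\rho = W/(\rho\circ\phi^{-1})$. Using this, for any $\phi$ the first step gives
\[
\sup_{\rho\in\FP_q} d_1\bigl(U/\rho,\ \iv\SS_q(W^\phi)\bigr) \leq \sup_{\rho\in\FP_q} d_1\bigl(U/\rho,\ W^\phi/\rho\bigr) \leq q^2\|U-W^\phi\|_\square,
\]
and by symmetry the same bound holds with $U$ and $W$ swapped. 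Taking the infimum over $\phi$ on the right-hand side and the max of the two one-sided suprema on the left gives $d_1^\Haus(\iv\SS_q(U),\iv\SS_q(W))\leq q^2\delta_\square(U,W)$.

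No step looks genuinely hard. The only place requiring any thought is the layer-cake argument upgrading the cut norm from its supremum over indicators to a supremum over $[0,1]$-valued functions, but this is a standard reduction that does not even cost a constant factor.
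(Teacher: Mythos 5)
Your proof is correct and follows essentially the same route as the paper: the entrywise bound $|\beta_{ij}(U/\rho)-\beta_{ij}(W/\rho)|\leq\|U-W\|_\square$ summed over the $q^2$ pairs, followed by invariance of $\iv\SS_q$ under measure-preserving bijections to pass to $\delta_\square$. You merely spell out two steps the paper treats as immediate (the layer-cake upgrade of the cut norm to $[0,1]$-valued test functions and the derivation of the Hausdorff bound), and both are done correctly.
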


\begin{proof}
We have $\alpha(U/\rho) = \alpha(\rho) = \alpha(W/\rho)$. Also for $ i,j \in
[q]$ we have
\begin{equation} \label{eq:beta-diff-cut}
\begin{aligned}
  \abs{\beta_{ij}(U/\rho) - \beta_{ij}(W/\rho)}
  &= \abs{\int_{[0,1]^2} (U - W)(x,y) \rho_i(x) \rho_j(y) \,dx\,dy}\\
  &\leq \norm{U - W}_\square.
\end{aligned}
\end{equation}
Summing over all $i,j \in [q]$ gives $d_1(U/\rho,W/\rho) \leq q^2 \norm{U -
W}_\square$. The claim \eqref{U2S3} follows immediately.
\end{proof}

We next observe that microcanonical ground state energies are continuous in
the cut metric.  To state this result, we define the norm $\|J\|_1$ of a
matrix $J \in \R^{q \times q}$ to be $\sum_{i,j\in [q]}|J_{ij}|$.

\begin{prop}
\label{prop:U2E&Ea} Let $q\in\N$, $\ba\in\PD_q$, and $h\in \R^q$, and let $J
\in \R^{q \times q}$ be a symmetric matrix. If $U$ and $W$ are arbitrary
graphons, then
\begin{equation}
\label{U2Ea}
|\EE_\ba(U,J)-\EE_\ba(W,J)|\le\|J\|_1
\delta_\square(W,U).
\end{equation}
\end{prop}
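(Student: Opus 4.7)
The plan is to prove the inequality first with the cut norm $\|\cdot\|_\square$ in place of the cut distance $\delta_\square$, and then upgrade by exploiting invariance under measure-preserving bijections.

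First I would establish the pointwise (in $\rho$) bound: for any $\rho \in \FP_q$,
\[
|\EE_\rho(U,J) - \EE_\rho(W,J)|
= \Bigl| \sum_{i,j} J_{ij} \int_{[0,1]^2} (U-W)(x,y)\,\rho_i(x)\rho_j(y)\,dx\,dy \Bigr|
\le \|J\|_1 \|U-W\|_\square.
\]
The key point is that for each fixed $(i,j)$, the integral $\int (U-W)(x,y)\rho_i(x)\rho_j(y)\,dx\,dy$ is a ``fractional'' cut-type integral, which is bounded by $\|U-W\|_\square$ since $\rho_i,\rho_j$ take values in $[0,1]$ (this is a standard fact, essentially the same as \eqref{eq:beta-diff-cut} in the proof of Lemma~\ref{lem:U-W-Haus}, only without the factor of $q^2$ once we weight by $|J_{ij}|$ and sum). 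Summing over $i,j$ produces the factor $\|J\|_1$.

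Next I would take the infimum over $\rho \in \FP_q$ with $\alpha(\rho)=\ba$ on both sides; since this pointwise bound gives $\EE_\rho(U,J) \le \EE_\rho(W,J) + \|J\|_1 \|U-W\|_\square$ and the symmetric inequality, it passes through the infimum defining $\EE_\ba$ to yield
\[
|\EE_\ba(U,J) - \EE_\ba(W,J)| \le \|J\|_1 \|U-W\|_\square.
\]

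Finally, to replace $\|U-W\|_\square$ by $\delta_\square(W,U)$, I would observe that $\EE_\ba$ is invariant under measure-preserving bijections of the underlying interval. Indeed, for any measure-preserving bijection $\phi\colon[0,1]\to[0,1]$ and any $\rho \in \FP_q$, the fractional partition $\rho \circ \phi$ lies in $\FP_q$, satisfies $\alpha(\rho\circ\phi) = \alpha(\rho)$, and a change of variables shows $\EE_{\rho\circ\phi}(W^\phi,J) = \EE_\rho(W,J)$. Hence $\EE_\ba(W^\phi,J) = \EE_\ba(W,J)$. Applying the norm version of the bound with $W$ replaced by $W^\phi$ and then taking the infimum over $\phi$ gives the desired estimate with $\delta_\square(W,U)$.

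There is no real obstacle here: the proof is routine once one notes that the cut norm controls all bilinear forms $\int F(x,y) f(x) g(y)\,dx\,dy$ with $f,g\in[0,1]$-valued, and that $\EE_\ba$ is a graphon invariant.
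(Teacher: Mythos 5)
Your proposal is correct and follows essentially the same route as the paper: the pointwise-in-$\rho$ bound $|\EE_\rho(U,J)-\EE_\rho(W,J)|\le\|J\|_1\|U-W\|_\square$ via the cut-norm control of the bilinear forms (exactly \eqref{eq:beta-diff-cut}), passing through the infimum defining $\EE_\ba$, and then replacing the cut norm by $\delta_\square$ using invariance of $\EE_\ba$ under measure-preserving bijections. The only cosmetic difference is the order of steps (the paper reduces to the cut norm at the outset, you optimize over $\phi$ at the end), which changes nothing.
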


\begin{proof}
Since the left side of this bound does not change if we replace $U$ by
$U^\phi$ for some measure preserving bijection $\phi\colon [0,1]\to[0,1]$, it
is enough to prove it in terms of $\|W-U\|_\square$ instead of
$\delta_\square(W,U)$. Let $\rho \in \FP_q$. Using \eqref{eq:beta-diff-cut}
we obtain
\begin{equation}
\label{Erho-Erho}
\begin{aligned}
\bigl|\EE_\rho(U,J,h)-\EE_\rho(W,J,h)\bigr| &= \Bigl|\sum_{i,j\in[q]}
(\beta_{ij}(U/\rho) - \beta_{ij}(W/\rho)) J_{ij}\Bigr|\\
& \leq \|J\|_1 \|U-W\|_\square,
\end{aligned}
\end{equation}
as desired.
\end{proof}

\subsection{Proof of (iii)$\Rightarrow$(i) in Theorem~\ref{thm:W-conv-equiv}}
\label{sec:Proof-E2Dist}

By the bound \eqref{U2Ea}, convergence in metric implies convergence of the
microcanonical ground state energies. To prove the converse, we will
establish the following proposition, one of the main results of this section.

\begin{theorem}
\label{thm:E-->delta} Let $W$ and $U$ be two  graphons. If
\begin{equation}
\label{Ea-Assumption}
\EE_\ba(U,J)=\EE_\ba(W,J)
\end{equation}
for all $q\in\N$, every symmetric matrix $J \in \R^{q \times q}$, and all
$\ba$ of the form
\begin{equation}
\label{aq}
\ba_q=(1/q,\dots,1/q),
\end{equation}
then $\delta_\square(W,U)=0$.
\end{theorem}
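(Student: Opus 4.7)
My approach has three stages: first extend the hypothesis from $\ba = \ba_q$ to arbitrary $\ba$; second, reformulate it as equality of convex hulls of quotient sets; third, the hard step, pass from all ground state energies to cut distance zero.

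\textbf{Stage 1 (Extension to general $\ba$).} I would first show that \eqref{Ea-Assumption} restricted to $\ba_q$'s in fact implies $\EE_\ba(W,J)=\EE_\ba(U,J)$ for all $q$, all $\ba\in\PD_q$, and all symmetric $J\in\R^{q\times q}$. The mechanism is refinement: given rational $\ba=(n_1/N,\dots,n_k/N)$ and $J\in\R^{k\times k}$, pick any surjection $\pi\colon [N]\to[k]$ with $|\pi^{-1}(i)|=n_i$ and set $\tilde J_{ab}=J_{\pi(a)\pi(b)}$. Fractional partitions with marginals $\ba_N$ biject (up to the refinement $\tilde\rho_a=\rho_{\pi(a)}/n_{\pi(a)}$) with fractional partitions having marginals $\ba$, and the energies match: $\langle\beta(W/\tilde\rho),\tilde J\rangle=\langle\beta(W/\rho),J\rangle$. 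Thus $\EE_\ba(W,J)=\EE_{\ba_N}(W,\tilde J)$, and the same identity for $U$ gives the matched hypothesis for rational $\ba$. The map $\ba\mapsto\EE_\ba(W,J)$ is continuous on $\PD_q$ (by a density argument using that any $\rho$ with $\alpha(\rho)=\ba$ can be perturbed to have marginals $\ba'$ near $\ba$, with correspondingly small change in $\EE_\rho$), so the identity extends to all $\ba\in\PD_q$.

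\textbf{Stage 2 (Support-function reformulation).} By \eqref{Ea-frac-def}, the identity $\EE_\ba(W,J)=\EE_\ba(U,J)$ for all symmetric $J\in\R^{q\times q}$ says the compact sets $\{\beta:(\alpha,\beta)\in\iv\SS_\ba(W)\}$ and $\{\beta:(\alpha,\beta)\in\iv\SS_\ba(U)\}$ (which, by symmetry of graphons, consist of symmetric matrices) have the same support function on the space of symmetric matrices. Consequently their closed convex hulls (in the symmetric subspace of $\R^{q\times q}$) coincide for every $q$ and every $\ba\in\PD_q$.

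\textbf{Stage 3 (From quotient data to cut distance).} The main obstacle is turning this quotient information into the conclusion $\delta_\square(W,U)=0$. My plan is to approximate both $W$ and $U$ by their block averages on the equi-partition $\PP_n$ of $[0,1]$ into $n$ intervals: by Lemma~\ref{lem:as-approx}, $W_{\PP_n}\to W$ and $U_{\PP_n}\to U$ in $L^1$, and hence in $\|\cdot\|_\square$. Viewing $W_{\PP_n}$ as an $n\times n$ symmetric matrix, the ground state energies $\EE_{\ba_n}(W_{\PP_n},J)$ reduce to maxima of bilinear forms $\langle P^{\mathsf T}JP,W_{\PP_n}\rangle$ over doubly stochastic matrices $P$; the critical lemma is to prove that matching these extremal values for all $J$ forces the symmetric matrices $W_{\PP_n}$ and $U_{\PP_n}$ to agree up to a simultaneous row-column permutation, which is exactly the statement $\delta_\square(W_{\PP_n},U_{\PP_n})=0$ on the step-function side. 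This is where rearrangement inequalities enter: they are the tool for identifying symmetric matrices from the values of their extremal rearranged bilinear pairings. Once this is established for each $n$, the triangle inequality
\[
\delta_\square(W,U)\le \|W-W_{\PP_n}\|_\square+\delta_\square(W_{\PP_n},U_{\PP_n})+\|U_{\PP_n}-U\|_\square\xrightarrow{n\to\infty}0
\]
concludes the argument. The truly hard part is the matrix-level rearrangement lemma; the reduction to step functions and the subsequent triangle-inequality limit are routine.
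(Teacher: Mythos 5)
Your Stages 1 and 2 are fine (Stage 1 is even unnecessary: the paper's argument never needs general $\ba$, only $\ba_q$), but Stage 3 --- which is the entire content of the theorem --- is not a proof; it is a deferral of the whole difficulty to an unproven ``critical lemma,'' and that lemma is both unavailable in the form you need and overstated in the form you give. First, the hypothesis \eqref{Ea-Assumption} concerns $W$ and $U$, not their block averages: by Proposition~\ref{prop:U2E&Ea} you only get $|\EE_{\ba}(W_{\PP_n},J)-\EE_{\ba}(U_{\PP_n},J)|\le \|J\|_1\bigl(\|W-W_{\PP_n}\|_\square+\|U-U_{\PP_n}\|_\square\bigr)$, an \emph{approximate} equality whose error grows with $\|J\|_1$. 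So the exact matrix-level statement you invoke is never at your disposal; you would need a quantitative stability version (``if the extremal values agree up to $\eps\|J\|_1$ for all $J$, then the step functions are $f(\eps)$-close in $\delta_\square$, uniformly in $n$''), and no such statement is formulated, let alone proved --- it is essentially a restatement of the theorem itself at the level of step functions. Second, even in the exact case your lemma's conclusion (``agree up to a simultaneous row-column permutation'') is strictly stronger than $\delta_\square(W_{\PP_n},U_{\PP_n})=0$ and is not what the hypothesis can deliver: as your own Stage 2 shows, the ground state energies only see the \emph{support functions}, i.e.\ the closed convex hulls of the quotient sets, and extracting the graphon (even up to weak isomorphism) from that convex-hull data is precisely the hard step. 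Saying ``this is where rearrangement inequalities enter'' is a gesture at a tool, not an argument.

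For comparison, the paper does not discretize at all. It introduces the quasi-inner product $\CC(W,Y)=\sup_\phi\E[WY^\phi]$, observes that $-\EE_{\ba_q}(W,J)=\CC(W,W^H)$ when $H$ has $q$ nodes of weight $1/q$ and edge weights $J$, and upgrades \eqref{Ea-Assumption} to $\CC(W,Y)=\CC(U,Y)$ for all bounded graphons $Y$ by step-function approximation (Lemma~\ref{lem:E-to-C}, using Lemma~\ref{lem:as-approx}). Monotone rearrangement and the alignment identity $\E[WY]=\E[W^*Y^*]$ are then used with indicator test functions $Y=\one_{\text{top}_\lambda(W)}$ to show $W$ and $U$ have the same \emph{distribution} (Lemma~\ref{lem:C-to-distribution}), and finally an $\arctan$-truncation argument (needed because $W$ is only $L^1$, so $\E[W^2]$ is unavailable) combined again with the rearrangement inequality yields $\delta_2(\arctan U,\arctan W)=0$ and hence $\delta_1(U,W)=0$. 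None of these steps --- the quasi-inner product, the equality of distributions, the truncation device --- appears in your outline, and without them (or a genuine substitute, including the missing stability lemma) your Stage 3 does not close.
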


This theorem proves the implication (iii)$\Rightarrow$(i) in
Theorem~\ref{thm:W-conv-equiv}, as well as the fact that convergence of
$\EE_\ba(W_n,J)$ is sufficient for convergence in metric (see
Remark~\ref{rem:W-equiv-conv}(iii)).  Indeed, for the second of these
assertions, assume first that the ground state energies $\EE_\ba(W_n,J)$
converge for all $q\in\N$, all $\ba$ of the form \eqref{aq}, and all $J$,
while $W_n$ does not converge in the cut metric. Since $(W_n)_{n\geq 0}$ is
assumed to be uniformly upper regular, we may use
Theorem~\ref{thm:UI-to-subsequence} to find two subsequences $W'_n$ and
$W''_n$ of $W_n$ that converge to two graphons $W$ and $U$ in the cut
distance $\delta_\square$, while $\delta_\square(W,U)>0$. But convergence in
the cut distance implies convergence of the ground state energies by
\eqref{U2Ea}, which means that $U$ and $W$ have identical ground state
energies, a contradiction. The proof of (iii)$\Rightarrow$(i) in
Theorem~\ref{thm:W-conv-equiv} is similar, since convergence of $W_n'$ to $W$
in metric implies that $\iv\SS_q(W_n')\to\iv\SS_q(W)$ in the Hausdorff
distance, which in turn can easily be seen to give convergence of the
quantities in (iii) to $\EE_\ba(W,J)$ (the proof is the same as that of
Theorem~\ref{thm:quotients-to-E}), and similarly for the convergence along
the subsequence $W''_n$ to $\EE_\ba(U,J)$.

To prove Theorem~\ref{thm:E-->delta}, we will work with the quasi-inner
product
\[
\CC(W,Y)=\sup_{\phi} \E[ WY^\phi] ,
\]
where the supremum is taken over all measure preserving bijections
$\phi\colon [0,1]\to[0,1]$ and the expectation $\E[\cdot]$ is with respect to
the Lebesgue measure on $[0,1]^2$, i.e.,
\[
 \E[ WY^\phi] =
\int\limits_{[0,1]^2} W(x,y)Y^\phi(x,y)\,dx\,dy
=\int\limits_{[0,1]^2} W(x,y)Y(\phi(x),\phi(y))\,dx\,dy.
\]
This quantity was defined in \cite{dense2}, where it was assumed that both
$W$ and $Y$ are in $L^\infty$. But the definition makes sense in our more
general context, where we will assume that $W$ is an arbitrary graphon and
$Y$ is bounded.

\begin{lemma}
\label{lem:E-to-C} Let $W$ and $U$ be two  graphons such that
\eqref{Ea-Assumption} holds for all $q\in\N$ and all $\ba$ of the form
\eqref{aq}. Then
\begin{equation}
\label{C-equal}
\CC(W,Y)=\CC(U,Y)
\end{equation}
for all bounded graphons $Y$.
\end{lemma}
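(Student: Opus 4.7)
The plan is to reduce to the case where $Y$ is a symmetric step function on the $q \times q$ grid of blocks $I_i \times I_j$ of equal measure (with $I_i = [(i-1)/q, i/q]$), by proving the identity
\[
\CC(W, Y) = -\EE_{\ba_q}(W, J),
\]
where $J \in \R^{q \times q}$ is the symmetric value matrix of $Y$. Once this identity is established, the hypothesis \eqref{Ea-Assumption} gives $\CC(W,Y)=\CC(U,Y)$ for all such step functions, and one extends to arbitrary bounded $Y$ by approximation.

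One inequality in the identity is immediate: every measure preserving bijection $\phi$ induces a hard fractional $q$-partition $\rho^\phi_i(x) = \one_{\phi(x)\in I_i}$ with $\alpha(\rho^\phi) = \ba_q$, and unpacking definitions gives $\int W Y^\phi = -\EE_{\rho^\phi}(W,J)\le -\EE_{\ba_q}(W,J)$. The harder direction---the main obstacle---is that $\EE_{\ba_q}$ is an infimum over \emph{all} fractional partitions $\rho$ with $\alpha(\rho)=\ba_q$, and the quadratic form $-\EE_\rho(W,J) = \sum_{i,j} J_{ij}\int \rho_i(x)\rho_j(y)W(x,y)\,dx\,dy$ is neither convex nor concave in $\rho$ in general, so there is no reason an optimal $\rho$ should be hard. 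To realize an arbitrary $\rho$ by a measure preserving bijection in the limit, my strategy is a deterministic rounding: partition $[0,1]$ into $N$ consecutive intervals $K_k$ of length $1/N$, compute the averages $\bar\rho_{i,k} = N\int_{K_k}\rho_i$, and within each $K_k$ carve out consecutive sub-intervals $K_{k,i}$ of length $\bar\rho_{i,k}/N$ (consistent since $\sum_i \bar\rho_{i,k}=1$). The hard partition $\tilde\rho^{(N)}_i = \one_{\bigcup_k K_{k,i}}$ satisfies $\alpha(\tilde\rho^{(N)}) = \ba_q$ exactly, and a direct computation shows that for every $W'$ constant on the blocks $K_k\times K_l$ one has the exact equality
\[
\int \tilde\rho^{(N)}_i(x)\tilde\rho^{(N)}_j(y) W'(x,y)\,dx\,dy = \int \rho_i(x) \rho_j(y) W'(x,y)\,dx\,dy.
\]
Applying this to $W' = W_{\PP_N}$ (the block-average with respect to $\PP_N = (K_1,\dots,K_N)$) and invoking $W_{\PP_N}\to W$ in $L^1$ from Lemma~\ref{lem:as-approx}, the error in the bilinear form with $W$ itself is at most $2\|W-W_{\PP_N}\|_1\to 0$. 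Since $\tilde\rho^{(N)}$ arises from some measure preserving bijection $\phi^{(N)}$ of $[0,1]$, this gives $-\EE_\rho(W,J) = \lim_N \int W Y^{\phi^{(N)}} \le \CC(W,Y)$, and taking the supremum over $\rho$ closes the identity.

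For a general bounded graphon $Y$, let $Y_n = Y_{\PP_n}$ where $\PP_n$ partitions $[0,1]$ into $n$ equal intervals; then $Y_n$ is a symmetric step function on equal blocks, $\|Y_n\|_\infty\le \|Y\|_\infty$ by \eqref{CONTRACT}, and $Y_n\to Y$ in $L^1$ by Lemma~\ref{lem:as-approx}. To show $\CC(W,Y_n)\to\CC(W,Y)$, I split $W = W\one_{|W|\le M} + W\one_{|W|>M}$: for any measure preserving $\phi$,
\[
\left|\int W(Y - Y_n)^\phi\right| \le M\|Y-Y_n\|_1 + 2\|Y\|_\infty \int |W|\one_{|W|>M},
\]
and given $\eps>0$ one chooses $M$ large (using $W\in L^1$) and then $n$ large to make both terms small. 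The same estimate applies with $U$ in place of $W$, and combining $\CC(W,Y_n)\to\CC(W,Y)$ and $\CC(U,Y_n)\to\CC(U,Y)$ with the equality $\CC(W,Y_n)=\CC(U,Y_n)$ already proved for each step function $Y_n$ yields $\CC(W,Y)=\CC(U,Y)$.
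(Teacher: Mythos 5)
Your proof is correct and takes essentially the same route as the paper: identify $\CC(W,\cdot)$ on symmetric step functions with $q$ equal steps with $-\EE_{\ba_q}(W,J)$, where $\ba_q=(1/q,\dots,1/q)$, and then pass to general bounded $Y$ via the truncation estimate $\bigl|\E[W(Y-Y_n)^\phi]\bigr|\le M\|Y-Y_n\|_1+2\|Y\|_\infty\|W\one_{|W|>M}\|_1$, which is uniform in $\phi$. The only difference is that you spell out, via the block-rounding construction, the nontrivial half of the identity $\CC(W,W^H)=-\EE_{\ba_q}(W,J)$ (that an arbitrary fractional partition with $\alpha(\rho)=\ba_q$ is realized in the limit by hard partitions coming from measure preserving bijections), a step the paper asserts without proof.
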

\begin{proof}
If $Y=W^H$ for a weighted graph $H$ on $q$ nodes, where $H$ has edge weights
$J\in\R^{q \times q}$ and vertex weights $\ba$ of the form \eqref{aq}, then
\[
-\EE_\mathbf{a}(W,J)=\CC(W,W^H),
\]
and the claim follows directly from the assumption \eqref{Ea-Assumption}. For
general $Y$, we use Lemma~\ref{lem:as-approx} to approximate $Y$ by step
functions. More explicitly, let $P_n$ be as in Lemma~\ref{lem:as-approx}, and
let $Y_n=Y_{P_n}$. Then $Y_n\to Y$ in $L^1$, and $\|Y_n\|_\infty\leq
\|Y\|_\infty$. Hence
\[
\begin{aligned}
\Bigl|\E[ W\, Y_n^\phi]- \E[ W \,Y^\phi]\Bigr|&\leq
2\|Y\|_\infty|\|W\one_{|W|\geq K}\|_1 +
\|W \one_{|W|\leq K}\|_\infty \|Y^\phi - Y_n^\phi\|_1
\\
&\leq
2\|Y\|_\infty|\|W\one_{|W|\geq K}\|_1 +
K \|Y - Y_n\|_1.
\end{aligned}
\]
The right side can be made as small as desired by first choosing $K$ large
enough and then $n$ large enough. Observing that the resulting convergence is
uniform in $\phi$, this implies that $\CC(W,Y_n)\to\CC(W,Y)$ and similarly
for $\CC(U,Y_n)$. From this, the claim follows.
\end{proof}

\begin{lemma}
\label{lem:C-to-distribution} Let $W$ and $U$ be two graphons such that
\eqref{C-equal} holds for all bounded graphons $Y$. Consider the real valued
random variables $\iv W=W(x,y)$ and $\iv U=U(x,y)$ where $x,y$ are chosen
independently uniformly at random from $[0,1]$. Then $\iv W$ and $\iv U$ have
the same distribution.
\end{lemma}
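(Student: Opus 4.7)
The plan is to deduce that the one-dimensional decreasing rearrangements $W^\ast$ and $U^\ast$ coincide; since $W^\ast$ has the same 1D distribution as $\iv W$, this will give the desired conclusion.

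\textbf{Step 1.} For every bounded non-decreasing $f\colon\R\to\R$, I will show $\CC(W,f(W))=\int W\,f(W)$. Writing $\iv W=W(x,y)$ and $\iv V^\phi=W(\phi(x),\phi(y))$, both have marginal law $\mu_W$, so $(\iv W,\iv V^\phi)$ runs over couplings of $\mu_W$ with itself, and $\CC(W,f(W))=\sup_\phi\E[\iv W\,f(\iv V^\phi)]$. Since $(w,v)\mapsto wf(v)$ is supermodular when $f$ is non-decreasing, the Fr\'echet--Hoeffding inequality asserts that this expectation, taken over \emph{all} couplings of $\mu_W$ with itself, is maximized at the comonotone coupling $\iv V^\phi=\iv W$, which is realized at $\phi=\mathrm{id}$; integrability follows from $W\in L^1$ and $f$ bounded. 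The restricted supremum over $\phi$-couplings therefore equals the full supremum, namely $\E[\iv W f(\iv W)]=\int W f(W)$.

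\textbf{Step 2.} Next, I estimate $\CC(U,f(W))$ using the Hardy--Littlewood rearrangement inequality. Since $\phi\times\phi$ preserves Lebesgue measure on $[0,1]^2$, each $f(W)^\phi$ has the same 1D distribution as $f(W)$, so
\[
\CC(U,f(W))\le\sup_{V'\sim f(W)}\int U V'=\int_0^1 U^\ast(s)\,f(W)^\ast(s)\,ds=\int_0^1 U^\ast(s)\,f(W^\ast(s))\,ds,
\]
using in the final equality that for non-decreasing $f$, $f\circ W^\ast$ is a decreasing function with the same distribution as $f(W)$ and hence equals $f(W)^\ast$. Combining this bound with Step 1 and the hypothesis $\CC(W,f(W))=\CC(U,f(W))$ yields
\[
\int_0^1\bigl(U^\ast(s)-W^\ast(s)\bigr)\,f(W^\ast(s))\,ds\ge 0
\]
for every bounded non-decreasing $f$; by symmetry ($W\leftrightarrow U$), the analogous inequality with $W^\ast$ and $U^\ast$ swapped in the argument of $f$ holds as well.

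\textbf{Step 3.} Specializing to $f=\one_{(c,\infty)}$ and setting $I_W(m):=\int_0^m W^\ast(s)\,ds$ and $I_U(m):=\int_0^m U^\ast(s)\,ds$, the inequality from Step 2 gives $I_U(m)\ge I_W(m)$ for every $m$ in the range of $c\mapsto|\{W>c\}|$. The function $I_W$ is linear on any interval where $W^\ast$ is constant (corresponding to an atom of $\mu_W$), while $I_U$ is always concave, so the inequality extends from the endpoints of such a flat to its interior, giving $I_U\ge I_W$ on all of $[0,1]$. The symmetric inequality yields $I_W\ge I_U$ on $[0,1]$; hence $I_W\equiv I_U$, and differentiation gives $W^\ast=U^\ast$ almost everywhere. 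The main subtlety is Step 1, where the possible unboundedness of $W$ is accommodated by the $L^1$--$L^\infty$ pairing between $W$ and $f(W)^\phi$; the rest reduces to standard manipulations with decreasing rearrangements.
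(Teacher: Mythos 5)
Your argument is correct and follows essentially the same route as the paper's proof: you test the hypothesis against indicators of (super-)level sets of $W$ and $U$, compute $\CC(W,\one_{W>c})$ exactly via comonotonicity/alignment, bound $\CC(U,\one_{W>c})$ by the Hardy--Littlewood rearrangement inequality, and conclude by symmetry that the integrals $\int_0^m W^*$ and $\int_0^m U^*$ of the decreasing rearrangements agree, hence the distributions coincide. The only substantive difference is how atoms are handled: the paper constructs sets $\text{top}_\lambda(W)$ of exact measure $\lambda$ (splitting a level set when necessary) so that equality holds for every $\lambda$ directly, whereas you only get the inequality for $m$ in the range of the survival function and fill in the gaps by the concavity of $m\mapsto\int_0^m U^*$ against the affineness of $m\mapsto\int_0^m W^*$ on flats -- both devices work.
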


To prove this lemma, we will use some notions and results from the theory of
monotone rearrangement.

\subsubsection{Monotone rearrangements and proof of Lemma~\ref{lem:C-to-distribution}}

Throughout this section, we identify graphons $W$ with the real-valued random
variables $W(x,y)$ obtained by choosing $x,y$ independently uniformly at
random from $[0,1]$; if $W$ is such a random variable, we use $E[W]$ to
denote its expectation. For $s\in \R$ we use $\{W>s\}$ to denote the event
that $W>s$, namely $\{W>s\}=\{(x,y) : W(x,y)>s\}$, and $\Pr[W>s]$ to denote
the probability of this event.

For a graphon $W$, we define the monotone rearrangement as the function
\[
W^*(x_1,x_2)=\sup\{t\in \R : \Pr[W>t] > \|x\|_\infty^2\},
\]
where $\|x\|_\infty=\max\{x_1,x_2\}$.  Then $W^*(x_1,x_2)$ is a
weakly decreasing function of $\|x\|_\infty$, and it has the same distribution as $W$;
i.e.,
\[\Pr[W>t]=\Pr[W^*>t].\]
To see why, note that
\[
\begin{aligned}
\Pr[W^* > t_0] &= \Pr[\sup\{t\in \R : \Pr[W>t]> \|x\|_\infty^2\} > t_0]\\
&= \Pr[\text{there exists $t>t_0$ such that $\Pr[W>t] > \|x\|_\infty^2$}]\\
&= \Pr[\Pr[W>t_0] > \|x\|_\infty^2]\\
&= \Pr\Big[\text{$x_1 < \sqrt{\Pr[W>t_0]}$ and $x_2 < \sqrt{\Pr[W>t_0]}$}\Big]\\
&= \Pr[W>t_0],
\end{aligned}
\]
where the third line follows from the fact that $t\mapsto Pr[W>t]$ is right-continuous.

Define two graphons $W$ and $U$ to be \emph{aligned} if their level sets are
nested, in the sense that for all $s,t\in\R$, either
$\{W>s\}\subseteq\{U>t\}$ or $\{U>s\}\subseteq \{W>s\}$.  It is easy to see
that for any two graphons $U$, $W$, the monotone rearrangements $U^*$ and
$W^*$ are aligned.

Let $W$ be an $L^1$ graphon, and let $Y$ be a bounded graphon. Then we have
the \emph{rearrangement inequality}
\[
\E[W\,Y]\leq \E[W^*\,Y^*].
\]
See Appendix~\ref{app:rearr} for a proof. The proof also tells us that
\[
\E[W\,Y]= \E[W^*\,Y^*]
\]
if $Y$ and $W$ are aligned. Before delving into the proof of
Lemma~\ref{lem:C-to-distribution} we observe that
\begin{equation}
\label{C-Algin}
\E[ WY]\leq \CC(W,Y)\leq \E[W^*Y^*].
\end{equation}
The first inequality follows immediately from the definition of $\CC(W,Y)$,
and the second follows from the definition and the fact that $Y$ and $Y^\phi$
have the same distribution, which in turn implies that $Y^*=(Y^\phi)^*$.

\begin{proof}[Proof of Lemma~\ref{lem:C-to-distribution}]

Define $\text{top}_\lambda(W)\subseteq [0,1]^2$ in such a way that the
Lebesgue measure of $\text{top}_\lambda(W)$ is $\lambda$, and $W(u,v)\leq
\inf_{(x,y)\in {\text{top}}_\lambda(W)}W(x,y)$ whenever $(u,v)\notin
\text{top}_{\lambda}(W)$. Explicitly, let $M=\sup\{s\in \R : \Pr[W>s]\geq
\lambda\}$. If $\Pr\{W=M\}=0$, then we have that $\Pr\{W>M\}=\Pr\{W\geq
M\}=\lambda$, and we define $\text{top}_{\lambda}(W)=\{W>M\}$. Otherwise,
$\Pr\{W>M\}\leq \lambda\leq \Pr\{W\geq M\}$, in which case we chose
$\text{top}_{\lambda}(W)$ in such a way that
$\{W>M\}\subseteq\text{top}_{\lambda}(W)\subseteq\{W\geq M\}$ and
$\mu[\text{top}_\lambda(W)]=\lambda$, where $\mu$ denotes the Lebesgue
measure on $[0,1]^2$. In either case, we have $M= \inf_{(x,y)\in
\text{top}_{\lambda}(W)}W(x,y)$ and $W(u,v)\leq M$ whenever $(u,v)\notin
\text{top}_\lambda(W)$.

It is easy to see that $W$ and the indicator function
$\one_{\text{top}_\lambda (W)}$ are aligned, implying that
$\E\Bigl[W\one_{\text{top}_\lambda (W)}\bigr]=\E\Bigl[W^*\one_{
\text{top}_\lambda}(W)^*\Bigr]$. Consider now the $L^1$ graphon $Y=\one_{
\text{top}_\lambda (W)}$. With the help of \eqref{C-Algin} and the fact that
$\E[WY]=E[W^*Y^*]$ we have
\[
\E[WY]=\CC(W,Y)=\CC(U,Y)\leq \E[U^* Y^*].
\]
Let $\tilde Y=\one_{\text{top}_\lambda(U)}$. Then $Y$ and $\tilde Y$ have the
same distribution, implying that ${\tilde Y}^*=Y^*$. On the other hand,
$\tilde Y$ and $U$ are aligned, implying that $\E[U\tilde Y]=\E[U^*\tilde
Y^*]$. Putting everything together, we conclude that
\[
\E[WY]\leq \E[U\tilde Y].
\]
In a similar way, we show that $\E[U\tilde Y]\leq\E[WY] $. We thus have shown
that for all $\lambda\in [0,1]$,
\[
\E[W\one_{\text{top}_\lambda(W)}]=\E[U\one_{\text{top}_\lambda(U)}].
\]
This in turn implies that $W$ and $U$ have the same distribution.
\end{proof}

\subsubsection{Proof of Theorem~\ref{thm:E-->delta}}
Theorem~\ref{thm:E-->delta} follows immediately from Lemma~\ref{lem:E-to-C}
and the following proposition. We remark that when $W$ is bounded, or even in
$L^2$, the proof of the proposition is much easier as one can consider
$\cC(W, W) = \E[W^2]$. This does not work when $W$ is only assumed to be in
$L^1$. The proof begins by transforming $W$ into a bounded graphon.  In what
follows, we use the metric
\[
\delta_p(U,W) = \inf_\phi \|U-W^\phi\|_p,
\]
where the infimum is over all measure-preserving bijections $\phi \colon
[0,1] \to [0,1]$.  It clearly satisfies $\delta_\square(U,W) \le
\delta_1(U,W)$.

\begin{prop}
If $U$ and $W$ are graphons such that $\cC(U, Y) = \cC(W, Y)$ for all
$L^\infty$ graphons $Y$, then $\delta_1(U, W) = 0$.
\end{prop}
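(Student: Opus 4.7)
I would begin by invoking Lemma~\ref{lem:C-to-distribution}: the hypothesis immediately gives that $W$ and $U$ have the same distribution on $[0,1]^2$, so in particular $W^*=U^*$. When $W$ and $U$ are bounded the result is essentially immediate: the hypothesis with $Y=U$ gives $\cC(W,U)=\cC(U,U)=\E[U^2]=\E[W^2]$, so picking measure-preserving bijections $\phi_n$ with $\E[WU^{\phi_n}]\to\E[U^2]$ yields $\|W-U^{\phi_n}\|_2^2=2\E[U^2]-2\E[WU^{\phi_n}]\to 0$, whence $\delta_1(W,U)\leq\delta_2(W,U)=0$.

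To handle the general $L^1$ case I would truncate. Let $W^{(M)}=\max(-M,\min(W,M))$ and define $U^{(M)}$ analogously; both are bounded graphons, and $W^*=U^*$ implies $(W^{(M)})^*=(U^{(M)})^*$. Since $U^{(M)}$ is a monotone function of $U$, the pair $(U,U^{(M)})$ is aligned, and the equality case in the rearrangement inequality gives $\cC(U,U^{(M)})=\E[U\cdot U^{(M)}]$. Applying the hypothesis to the $L^\infty$ graphon $Y=U^{(M)}$ therefore yields
\[
\cC(W,U^{(M)})=\cC(U,U^{(M)})=\E[U\cdot U^{(M)}],
\]
while the rearrangement inequality combined with $W^*=U^*$ simultaneously upper-bounds $\cC(W,U^{(M)})$ by the same value $\E[U^*\cdot(U^{(M)})^*]=\E[U\cdot U^{(M)}]$.

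Next, choose measure-preserving bijections $\phi_n$ with $\E[W\cdot(U^{(M)})^{\phi_n}]\to\E[U\cdot U^{(M)}]$. Splitting $W=W^{(M)}+(W-W^{(M)})$ and using $|(U^{(M)})^{\phi_n}|\leq M$ bounds the tail correction by $M\E[(|W|-M)_+]$. A direct computation gives the identity $\E[U\cdot U^{(M)}]-\E[(U^{(M)})^2]=M\E[(|U|-M)_+]$, which by distributional equality equals $M\E[(|W|-M)_+]$; combined with the rearrangement upper bound $\E[W^{(M)}\cdot(U^{(M)})^{\phi_n}]\leq\E[(W^{(M)})^*(U^{(M)})^*]=\E[(U^{(M)})^2]$, this forces $\E[W^{(M)}\cdot(U^{(M)})^{\phi_n}]\to\E[(U^{(M)})^2]$. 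The bounded-case $L^2$ argument then gives $\|W^{(M)}-(U^{(M)})^{\phi_n}\|_1\to 0$. Finally, the triangle inequality
\[
\|W-U^{\phi_n}\|_1\leq\|W-W^{(M)}\|_1+\|W^{(M)}-(U^{(M)})^{\phi_n}\|_1+\|U^{(M)}-U\|_1
\]
closes the argument: pick $M$ large enough that the outer two terms are each below $\eps/3$ (possible since $W,U\in L^1$), then $n$ large enough to control the middle term. The main obstacle is that $W$ and $U$ need not be in $L^2$, which blocks a clean one-step $L^2$ argument; the delicate point is the exact tail identity above, which relies on distributional equality and precisely balances the truncation error against the rearrangement upper bound.
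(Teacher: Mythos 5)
Your argument is correct, and it is a genuinely different implementation of the same underlying strategy as the paper's proof. Both start from Lemma~\ref{lem:C-to-distribution} (equality of distributions) and both pass through a bounded surrogate, the aligned-equality case of the rearrangement inequality, and an $L^2$ polarization identity; but the mechanisms differ. The paper replaces $W,U$ by $\wt W=\arctan W$, $\wt U=\arctan U$ and exploits the fact that \emph{both} $\arctan x$ and $x-\arctan x$ are increasing: the resulting rearrangement sandwich
\[
\E[W\wt W]-\E[U^\sigma\wt W]\;\geq\;\E[\wt W^2]-\E[\wt U^\sigma\wt W]\;\geq\;0
\]
converts the hypothesis $\cC(U,\wt W)=\cC(W,\wt W)=\E[W\wt W]$ into $\cC(\wt U,\wt W)=\cC(\wt U,\wt U)=\cC(\wt W,\wt W)$, hence $\delta_2(\wt U,\wt W)=0$ exactly, and then transfers back to $U,W$ via the Lipschitz bound $|x-y|\le(1+K^2)|\arctan x-\arctan y|$ on $[-K,K]$ together with tail estimates that again use equality of distributions. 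You instead truncate at level $M$ and close the analogous sandwich with the exact identity $\E[U\,U^{(M)}]-\E[(U^{(M)})^2]=M\,\E[(|U|-M)_+]$, which by distributional equality exactly cancels the truncation error $M\,\E[(|W|-M)_+]$; this forces a near-optimal sequence $\phi_n$ for the pairing of $W$ against $U^{(M)}$ to be near-optimal for the truncated pairing as well, giving $\|W^{(M)}-(U^{(M)})^{\phi_n}\|_2\to 0$ and then a one-line triangle inequality in $L^1$. What each buys: the paper's single fixed transform yields the clean intermediate statement $\delta_1(\wt U,\wt W)=0$ before any tail bookkeeping, while your route avoids the mean-value-theorem comparison entirely at the cost of rebalancing truncation errors at every level $M$ (and of the slightly delicate observation that the same $\phi_n$ works before and after truncation, which your tail identity makes precise). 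Two small points worth making explicit if you write this up: the alignment of $(U,U^{(M)})$ should be justified by noting that every level set of $U^{(M)}$ is a level set of $U$ or trivial, and the third term in your final triangle inequality is really $\|(U^{(M)})^{\phi_n}-U^{\phi_n}\|_1$, which equals $\|U^{(M)}-U\|_1$ because $\phi_n$ is measure preserving.
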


\begin{proof}
We know that $U$ and $W$ have the same distribution. Let $\wt W = \arctan W$ and
$\wt U = \arctan U$ (note that $\wt W$ and $\wt U$ are both bounded). Since both
$\arctan x$ and $x - \arctan x$ are increasing in $x$, for every measure preserving
bijection $\sigma \colon [0,1] \to [0,1]$ the rearrangement inequality implies that
\[
\E [(W - \wt W) \wt W] \geq \E [(U -\wt U)^\sigma \wt W]
\]
and
\[
\E [\wt W^2] \geq \E [\wt U^\sigma \wt W].
\]
Thus
\begin{equation} \label{eq:sigma-sandwich}
\E[W \wt W] - \E[U^\sigma \wt W] \geq \E[ \wt W^2] - \E[\wt
U^\sigma \wt W] \geq 0,
\end{equation}
By assumption we have $\cC(U,\wt W)=\cC(W, \wt W) $, which must equal $\E[W
\wt W]$ by the rearrangement inequality. Taking the infimum over all $\sigma$
in \eqref{eq:sigma-sandwich} and using the facts from the previous sentence
yields $\cC(\wt W, \wt W) = \cC(\wt U, \wt W)$. A similar argument shows that
$\cC(\wt U, \wt U) = \cC(\wt U, \wt W)$. Therefore
\begin{align*}
\delta_2(\wt U, \wt W)^2 = \inf_\sigma \E[(\wt U^\sigma - \wt W)^2]
&=
\E[\wt U^2] + \E[\wt W^2] - 2 \sup_\sigma \E[ \wt U^\sigma \wt W]
\\&= \cC(\wt U, \wt U) + \cC(\wt W, \wt W) - 2\cC(\wt U, \wt W) = 0.
\end{align*}
Since $\delta_1(\wt U, \wt W) \leq \delta_2(\wt U, \wt W)$, we must have
$\delta_1(\wt U, \wt W) = 0$ as well.

Finally we need to deduce that $\delta_1(U, W) = 0$. Let $K > 0$. From the
mean value theorem, we know that
\[
\abs{x - y} \leq (1 + K^2)\abs{\arctan x - \arctan y}
\]
whenever $x,y \in [-K,K]$. It follows that for every $\sigma$,
\[
\snorm{(U^\sigma - W)\one_{\abs{U^\sigma} \leq K,\abs{W} \leq K}}_1 \leq (1 + K^2)
\snorm{\wt U^\sigma - \wt W}_1.
\]
The left side differs from $\norm{U^\sigma - W}_1$ by at most $4 \norm{U
\one_{\abs{U} > K}}_1$ (here we use the triangle inequality, and the fact
that $U$ and $W$ have the same distribution, so that $\norm{W
\one_{\abs{U^\sigma}
> K}}_1 \leq \norm{U \one_{\abs{U} > K}}_1$ and $\norm{U^\sigma \one_{\abs{W}
> K}}_1 \leq \norm{U \one_{\abs{U} > K}}_1$ by the rearrangement inequality).
Thus
\[
\snorm{U^\sigma - W}_1 \leq (1 + K^2)
\snorm{\wt U^\sigma - \wt W}_1 + 4 \norm{U\one_{\abs{U} > K}}_1.
\]
Taking the infimum over $\sigma$ and using $\delta_1(\wt U, \wt W) = 0$, we
find that $\delta_1(U, W) \leq 4 \norm{U \one_{\abs{U} > K}}$. Since $K$ can
be made arbitrarily large, $\delta_1(U, W) = 0$.
\end{proof}

\subsection{Proof of (i)$\Rightarrow$(iv)$\Rightarrow$(iii) in Theorem~\ref{thm:W-conv-equiv}}
\label{sec:WProof-DtoF}

The following result gives the implication (i)$\Rightarrow$(iv) in
Theorem~\ref{thm:W-conv-equiv} as well as the statement that
$\FF_{\ba}(W_n,J)$ converges whenever $W_n$ converges in metric.

\begin{prop}
\label{prop:U2F} Let $q\in\N$, $\ba\in \PD_q$, $\eps>0$, and $h\in\R^q$, and
let $J \in \R^{q \times q}$ be a symmetric matrix. For any two graphons $U$
and $W$,
\[
\Bigl|\FF_\ba(U,J)-\FF_\ba(W,J)\Bigr|\leq \|J\|_1\delta_\square(U,W)
\]
and
\[
\Bigl|\FF_{\ba,\eps}(U,J)-\FF_{\ba,\eps}(W,J)\Bigr|\leq \|J\|_1\delta_\square(U,W).
\]
\end{prop}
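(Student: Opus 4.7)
The plan is to mirror the proof of Proposition~\ref{prop:U2E&Ea}, exploiting the key observation that the entropy term $\Ent(\rho)$ in the definition \eqref{FaW-def} of $\FF_\ba(W,J)$ (and in \eqref{FaepsW-def}) depends only on $\rho$ and not on the graphon. Consequently, the difference of the objective functionals over which we are infimizing is exactly the same as in the ground state energy case, and the bound \eqref{Erho-Erho} from the proof of Proposition~\ref{prop:U2E&Ea} can be reused verbatim.

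First I would note that for every $\rho \in \FP_q$,
\[
\bigl|\bigl(\EE_\rho(U,J)-\Ent(\rho)\bigr)-\bigl(\EE_\rho(W,J)-\Ent(\rho)\bigr)\bigr|
=\bigl|\EE_\rho(U,J)-\EE_\rho(W,J)\bigr|\le \|J\|_1\,\|U-W\|_\square
\]
by \eqref{Erho-Erho}. Since $0\le\Ent(\rho)\le\log q$ and $|\EE_\rho(W,J)|\le\|J\|_\infty\|W\|_1$, the relevant infima are finite. Applying the elementary inequality $|\inf_X f-\inf_X g|\le\sup_X|f-g|$ with $X=\{\rho\in\FP_q:\alpha(\rho)=\ba\}$, respectively $X=\{\rho\in\FP_q:\|\alpha(\rho)-\ba\|_\infty\le\eps\}$, yields
\[
\bigl|\FF_\ba(U,J)-\FF_\ba(W,J)\bigr|\le \|J\|_1\,\|U-W\|_\square
\quad\text{and}\quad
\bigl|\FF_{\ba,\eps}(U,J)-\FF_{\ba,\eps}(W,J)\bigr|\le \|J\|_1\,\|U-W\|_\square.
\]

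Finally, to upgrade the bound from $\|U-W\|_\square$ to $\delta_\square(U,W)$, I would verify that $\FF_\ba$ and $\FF_{\ba,\eps}$ are invariant under measure-preserving bijections of the graphon. Given a measure-preserving bijection $\phi\colon[0,1]\to[0,1]$ and $\rho\in\FP_q$, define $\rho^\phi\in\FP_q$ by $\rho^\phi_i(x)=\rho_i(\phi(x))$. Then $\alpha(\rho^\phi)=\alpha(\rho)$ and $\Ent(\rho^\phi)=\Ent(\rho)$ since $\phi$ preserves Lebesgue measure, while a change of variables gives $\EE_{\rho^\phi}(W^\phi,J)=\EE_\rho(W,J)$. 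Hence $\rho\mapsto\rho^\phi$ is a bijection of $\FP_q$ that preserves both the constraint sets and the objective, so $\FF_\ba(W^\phi,J)=\FF_\ba(W,J)$ and analogously for $\FF_{\ba,\eps}$. Applying the bound above with $W$ replaced by $W^\phi$ and taking the infimum over all measure-preserving bijections $\phi$ yields the two claimed inequalities.

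There is no real obstacle here: the argument is a direct adaptation of Proposition~\ref{prop:U2E&Ea}, and the only point worth checking carefully is the invariance of $\FF_\ba$ and $\FF_{\ba,\eps}$ under $W\mapsto W^\phi$, which is routine.
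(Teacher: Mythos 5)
Your proof is correct and follows essentially the same route as the paper: the paper likewise reduces to a bound in $\|U-W\|_\square$ via invariance under measure-preserving bijections and then applies \eqref{Erho-Erho} together with the definitions \eqref{FaW-def} and \eqref{FaepsW-def}, the entropy term cancelling since it does not depend on the graphon. Your extra details (the inequality $|\inf f-\inf g|\le\sup|f-g|$ and the explicit verification of invariance via $\rho\mapsto\rho^\phi$) are exactly the steps the paper leaves implicit.
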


\begin{proof}
Since the left sides of the above bounds do not change if we replace $U$ by
$U^\phi$ for a measure preserving bijection $\phi\colon [0,1]\to[0,1]$, it is
enough to prove the lemma with a bound in terms of $\|U-W\|_\square$ instead
of $\delta_\square(U,W)$. The result then follows immediately from
\eqref{Erho-Erho} and the definitions \eqref{FaW-def} and \eqref{FaepsW-def}.
\end{proof}

Next we show that convergence of $\FF_\ba(W_n,J)$ for all $J$ implies
convergence of $\EE_\ba(W_n,J)$.  Together with our results from the last
section, this shows that convergence of $\FF_\ba(W_n,J)$ for all $q\in\N$,
$\ba\in\PD_q$, and symmetric $J\in\R^{q \times q}$ is sufficient for metric
convergence, which concludes the proof of Remark~\ref{rem:W-equiv-conv}(ii).

\begin{lemma}
\label{FF2EE} Let $q\in\N$, $\ba\in\PD_q$, and $c>0$, let $J \in \R^{q \times
q}$ be symmetric, and let $U$ and $W$ be two graphons. Then
\[
\bigl|\EE_\ba(W,J)-\EE_\ba(U,J)\bigr|
\le
\frac{1}{c}\bigl|\FF_\ba(W,cJ)-\FF_\ba(U,cJ)\bigr|+ \frac{2 \log q}{c}.
\]
\end{lemma}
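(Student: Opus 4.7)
The plan is to exploit the uniform sandwich relation between free energies and ground state energies, and then rescale.

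First, I would observe directly from the definitions \eqref{EaW-def}, \eqref{FaW-def} that since $0 \le \Ent(\rho) \le \log q$ for every fractional $q$-partition $\rho$, we have, for any symmetric $J'\in\R^{q\times q}$ and any graphon $V$,
\[
\EE_\ba(V,J') - \log q \;\le\; \FF_\ba(V,J') \;\le\; \EE_\ba(V,J').
\]
Indeed, taking the infimum over $\{\rho : \alpha(\rho)=\ba\}$ in $\EE_\rho(V,J')-\log q \le \EE_\rho(V,J')-\Ent(\rho)\le\EE_\rho(V,J')$ gives exactly this.

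Next, I would use the linearity $\EE_\rho(V,cJ)=c\,\EE_\rho(V,J)$, which is immediate from the definition of $\EE_\rho$ as a bilinear pairing with $J$. Applying the sandwich with $J'=cJ$ and dividing by $c>0$ yields
\[
\EE_\ba(V,J) - \frac{\log q}{c} \;\le\; \frac{1}{c}\FF_\ba(V,cJ) \;\le\; \EE_\ba(V,J),
\]
i.e., $\bigl|\EE_\ba(V,J) - \tfrac1c\FF_\ba(V,cJ)\bigr|\le \log q/c$ for every graphon $V$.

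Finally, I would apply this bound with $V=W$ and with $V=U$, and then use the triangle inequality:
\[
\begin{aligned}
\bigl|\EE_\ba(W,J)-\EE_\ba(U,J)\bigr|
&\le \bigl|\EE_\ba(W,J)-\tfrac1c\FF_\ba(W,cJ)\bigr|
+ \tfrac1c\bigl|\FF_\ba(W,cJ)-\FF_\ba(U,cJ)\bigr| \\
&\qquad+ \bigl|\tfrac1c\FF_\ba(U,cJ)-\EE_\ba(U,J)\bigr|\\
&\le \tfrac1c\bigl|\FF_\ba(W,cJ)-\FF_\ba(U,cJ)\bigr| + \tfrac{2\log q}{c},
\end{aligned}
\]
as claimed. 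There is no real obstacle here; the only substantive inputs are the uniform bound $\Ent(\rho)\in[0,\log q]$ and the homogeneity of $\EE_\rho(\cdot,J)$ in $J$, both of which are visible from the definitions. This lemma is essentially the graphon analogue of the scaling step behind Lemma~\ref{lem:FatoFetc}(iii), used here to transfer approximate equality of free energies (at rescaled coupling) to approximate equality of microcanonical ground state energies.
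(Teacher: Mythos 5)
Your proof is correct and follows essentially the same route as the paper: bound $\bigl|\EE_\ba(\cdot,J')-\FF_\ba(\cdot,J')\bigr|\le\log q$ via $0\le\Ent(\rho)\le\log q$, use the homogeneity of $\EE$ in $J$ to rescale by $c$, and conclude with the triangle inequality. The only cosmetic difference is that you rescale before applying the triangle inequality, whereas the paper writes $\EE_\ba(W,J)-\EE_\ba(U,J)=\tfrac1c\bigl(\EE_\ba(W,cJ)-\EE_\ba(U,cJ)\bigr)$ first; the substance is identical.
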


\begin{proof}
Using the fact that $\Ent(\rho)\leq \log q$, we get by \eqref{EaW-def} and
\eqref{FaW-def}
\[
\bigl|\EE_\ba(W,J)-\FF_\ba(W,J)\bigr|\le \log q,
\]
for all $J$, and similarly for $U$. Hence
\begin{align*}
\bigl|\EE_\ba(W,J)-\EE_\ba(U,J)\bigr|
&=\frac{1}{c}\bigl|\EE_\ba(W,cJ)-\EE_\ba(U,cJ)\bigr|\\
& \le
\frac{1}{c}\Bigl(\bigl|\FF_\ba(W,cJ)-\FF_\ba(U,cJ)\bigr| +2\log
q\Bigr),
\end{align*}
which proves the claim.
\end{proof}

\begin{proof}[Proof of (iv)$\Rightarrow$(iii) in Theorem~\ref{thm:W-conv-equiv}]
As in the proof above, one sees that
\[\frac{1}{c}\FF_{\ba,\eps}(W_n,cJ)-\frac{\log q}c
\leq
\EE_{\ba,\eps}(W_n,J)
\leq\frac{1}{c}\FF_{\ba,\eps}(W_n,cJ)+\frac{\log q}c.
\]
But this clearly shows that (iv)$\Rightarrow$(iii) in
Theorem~\ref{thm:W-conv-equiv}.
\end{proof}

\subsection{Quantitative bounds on distance between fractional quotients}
\label{sec:Graphon-Quotients}

In this section, we prove a quantitative bound on the distance between two
different quotients of the same graphon, which will be used in the next
section.

We define the $L^1$ distance in $\FP_q$ to be
\begin{equation}
\label{d-rho-rho}
d_1(\rho,\rho')=
\sum_{i\in[q]}\int_0^1|\rho_i(x)-\rho_i'(x)| \,dx;
\end{equation}
note that $d_1(\rho,\rho')\leq 2$. We also need the definition of $K$-bounded
tails from \eqref{K-bd-tails}.

\begin{lemma}\label{lem:drho->dW/phi}
Let $q\in\N$, and let $W$ be a graphon with $K$-bounded tails for some
function $K \colon (0,\infty) \to (0,\infty)$. Then there exists a weakly
increasing function $\eps_K:[0,2]\to [0,\infty)$ such that $\eps_K(x)\to 0$
as $x\to 0$ and
\[
d_1(W/\rho,W/\rho')
\leq
\eps_K(d_1(\rho,\rho'))
\]
for all $\rho,\rho' \in \FP_q$. For $\ba,\bb \in \PD_q$,
\begin{equation}
\label{Haus-Sa-Sb}
d_1^{\Haus}\Bigl(\iv\SS_\ba(W),\iv\SS_\bb(W)\Bigl)\leq \eps_K(\|\ba-\bb\|_1),
\end{equation}
where $\|\ba-\bb\|_1=\sum_i|a_i - b_i|$.
\end{lemma}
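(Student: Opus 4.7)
The plan has two parts: first establish the main Lipschitz-type bound $d_1(W/\rho, W/\rho') \leq \eps_K(d_1(\rho,\rho'))$, and then derive the Hausdorff bound \eqref{Haus-Sa-Sb} by constructing, for each $\rho \in \FP_q$ with $\alpha(\rho) = \ba$, a nearby $\rho' \in \FP_q$ with $\alpha(\rho') = \bb$.

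For the main bound, I would write
\[
\beta_{ij}(W/\rho) - \beta_{ij}(W/\rho') = \int\bigl(\rho_i(x)\rho_j(y) - \rho_i'(x)\rho_j'(y)\bigr)\,W(x,y)\,dx\,dy
\]
and expand the parenthesis as $\rho_i(x)(\rho_j(y)-\rho_j'(y)) + (\rho_i(x)-\rho_i'(x))\rho_j'(y)$. Splitting $W = W\one_{|W|\le M} + W\one_{|W|>M}$, the bounded piece contributes at most $M\bigl(\|\rho_i-\rho_i'\|_1 + \|\rho_j-\rho_j'\|_1\bigr)$ since $0\le\rho_i,\rho_j'\le 1$, while the tail piece contributes at most $\int|W|\one_{|W|>M}$ because $|\rho_i\rho_j-\rho_i'\rho_j'|\le 1$ pointwise. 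Taking $M = K(\eta)$ and invoking the $K$-bounded tails hypothesis bounds the tail by $\eta$. Adding the trivial bound $\sum_i|\alpha_i(\rho)-\alpha_i(\rho')|\le d_1(\rho,\rho')$ and summing over $i,j$ gives
\[
d_1(W/\rho,W/\rho') \leq \bigl(1 + 2qK(\eta)\bigr)\,d_1(\rho,\rho') + q^2\eta.
\]
Defining
\[
\eps_K(t) = \inf_{\eta>0}\Bigl(\bigl(1+2qK(\eta)\bigr)t + q^2\eta\Bigr),
\]
we obtain a weakly increasing function (infimum of a family of functions that are each weakly increasing in $t$) that tends to $0$ as $t\to 0$: for any $\delta>0$, first pick $\eta$ so that $q^2\eta<\delta/2$, then pick $t$ so that $(1+2qK(\eta))t<\delta/2$.

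For the Hausdorff bound, I would construct $\rho'$ from $\rho$ by iterated elementary transfers. As long as $\int\rho_i > b_i$ for some $i$, pick $j$ with $\int\rho_j < b_j$, set $\delta = \min\bigl(\int\rho_i - b_i,\, b_j - \int\rho_j\bigr)$, and use the continuity of $t\mapsto\int_0^t\rho_i$ (intermediate value theorem) to choose a measurable set $A\subseteq[0,1]$ with $\int_A\rho_i = \delta$. Replace $\rho_i$ by $\rho_i\one_{A^c}$ and $\rho_j$ by $\rho_j + \rho_i\one_A$. This preserves both $\sum_k\rho_k = 1$ pointwise and $\rho_k\in[0,1]$, transfers mass $\delta$ from component $i$ to component $j$, and contributes exactly $2\delta$ to $d_1(\rho,\rho')$. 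Since the total mass to be transferred equals $\frac12\|\ba-\bb\|_1$, after at most $q-1$ such steps we arrive at $\rho'$ with $\alpha(\rho') = \bb$ and $d_1(\rho,\rho')\le\|\ba-\bb\|_1$. Combined with the first part this gives $d_1(W/\rho,W/\rho')\le\eps_K(\|\ba-\bb\|_1)$, and the symmetric argument with $\ba$ and $\bb$ interchanged yields \eqref{Haus-Sa-Sb}.

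The main obstacle is the first step, where the unboundedness of $W$ means the naive inequality $|\beta_{ij}(W/\rho)-\beta_{ij}(W/\rho')| \le \|W\|_\infty \cdot (\text{something})$ is not available; the $K$-bounded tails hypothesis is used precisely to trade off the truncation level $M$ against the mass in the tail. The second step is straightforward once we recognize that the $L^1$ distance on $\FP_q$ measures a total-variation-like quantity whose minimum over distributions with prescribed marginals on $[q]$ is exactly $\|\ba-\bb\|_1$.
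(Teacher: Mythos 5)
Your proof is correct and follows essentially the same route as the paper's: bound the $\beta$-differences by truncating $W$ at level $K(\eta)$ and using the bounded-tails hypothesis for the remainder, add the trivial bound on the $\alpha$-differences, and then obtain \eqref{Haus-Sa-Sb} by producing, for each $\rho$ with $\alpha(\rho)=\ba$, a $\rho'$ with $\alpha(\rho')=\bb$ at $d_1$-distance at most $\|\ba-\bb\|_1$ (the paper merely asserts this; your mass-transfer construction fills it in). The only difference is cosmetic: the paper exploits $\sum_i\rho_i(x)=1$ to get constants independent of $q$, while your bound carries harmless factors of $q$ and $q^2$, which is immaterial since $q$ is fixed in the lemma.
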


\begin{proof}
Fix $\eps>0$.  Clearly
\[
\sum_i|\alpha_i(\rho)-\alpha_i(\rho')|
\leq \sum_i\int_0^1|\rho_i(x)-\rho'_i(x)|\,dx=d_1(\rho,\rho').
\]
On the other hand, using the fact that $\sum_i\rho_i(x)=\sum_j\rho'_i(x)=1$
for all $x\in[0,1]$, we have
\begin{align*}
\sum_{i,j}\Bigl|\beta_{ij}(W/\rho)
-\beta_{ij}(W/\rho')\Bigr|
&=\sum_{i,j}
\Bigl|
\int _{[0,1]^2} W(x,y)\Bigl(\rho_i(x)\rho_j(y)- \rho'_i(x)\rho'_j(y)\Bigr)\,dx\,dy
\Bigr|
\\
&\leq
\int _{[0,1]^2} |W(x,y)|\sum_{i,j}\Bigl|\rho_i(x)\rho_j(y)-
\rho_i(x)\rho'_j(y)\Bigr|\,dx\,dy
\\
&\quad \phantom{}+
\int _{[0,1]^2} |W(x,y)|\sum_{i,j}\Bigl|\rho_i(x)\rho'_j(y)-
\rho'_i(x)\rho'_j(y)\Bigr|\,dx\,dy
\\
&=
 2 \int _{[0,1]^2} |W(x,y)|
\sum_{i}\Bigl|\rho_i(x)-\rho'_i(x)\Bigr|\,dx\,dy
\\
&\leq
2 \int _{[0,1]^2} K(\eps/8)
\sum_{i}\Bigl|\rho_i(x)-\rho'_i(x)\Bigr|\,dx\,dy
\\
&\quad \phantom{}+
4 \int _{[0,1]^2} |W(x,y)|\one_{|W(x,y)|\geq K(\eps/8)}
\,dx\,dy
\\
&\leq 2K(\eps/8)d_1(\rho,\rho') + \eps/2,
\end{align*}
showing that
\[
d_1(W/\rho,W/\rho')
\leq (1+ 2K(\eps/8))d_1(\rho,\rho') + \eps/2,
\]
which is at most $\eps$ provided that $d_1(\rho,\rho')\leq
\eps/(2+4K(\eps/8))$. Since $\eps>0$ was arbitrary, this immediately implies
the existence of the desired function $\eps_K$.

The claim \eqref{Haus-Sa-Sb} follows from noting that for any $\ba, \bb \in
\PD_q$ and $\rho \in \FP_q$ with $\alpha(\rho) = \ba$, we can find a $\rho'
\in \FP_q$ with $\alpha(\rho') = \bb$ such that $d_1(\rho,\rho') =
\|\ba-\bb\|_1$.  (In fact, we can choose $\rho'$ so that $\rho_i \le \rho'_i$
if and only if $a_i \le b_i$.)
\end{proof}

The above lemma can be used to show that $\EE_\ba(W,J)$ is continuous in
$\ba$. The continuity of $\FF_\ba(W,J)$ then follows from noting that
$\Ent(\rho)$ is uniformly continuous in $\rho \in \FP_q$ with respect to
$d_1$ (as $-x \log x$ is continuous on $[0,1]$). More explicitly, we have the
following lemma.

\begin{lemma}\label{lem:S-cont}
Let $q\in\N$. Then the function $\Ent\colon \FP_q\to [0,\log q]$ with
$\rho\mapsto \Ent(\rho)$ is uniformly continuous in the metric $d_1$ defined
in \eqref{d-rho-rho}. Explicitly,
\[
|\Ent(\rho)-\Ent(\rho')|
\leq q \tilde f\Bigl(\frac 1qd_1(\rho,\rho')\Bigr),
\]
where $\tilde f(x)=x(1-\log x)$.
\end{lemma}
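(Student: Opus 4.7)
The plan is to reduce the statement to a pointwise bound on $|f(a)-f(b)|$ for $f(x)=-x\log x$ on $[0,1]$, and then pull the resulting integral/sum inside the concave function $\tilde{f}$ via two applications of Jensen's inequality.

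First I would prove the following single-variable estimate: for all $a,b\in[0,1]$,
\[
|f(a)-f(b)|\leq \tilde{f}(|a-b|),
\]
where $f(0):=0$. WLOG $a\geq b$, and set $t=a-b\in[0,1]$. For the upper direction, $f$ is concave with $f(0)=0$ and hence subadditive, so $f(a)=f(b+t)\leq f(b)+f(t)$, giving $f(a)-f(b)\leq f(t)=-t\log t\leq \tilde{f}(t)$. For the opposite direction, concavity at $a$ gives
\[
f(b)\leq f(a)+f'(a)(b-a)=f(a)+t(1+\log a),
\]
and since $a,t\in[0,1]$ we have $at\leq 1$, so $\log a\leq -\log t$ and therefore $t(1+\log a)\leq t(1-\log t)=\tilde{f}(t)$. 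This is the only real computation in the whole argument.

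Next, applying the pointwise bound with $a=\rho_i(x)$ and $b=\rho_i'(x)$ and the triangle inequality yields
\[
|\Ent(\rho)-\Ent(\rho')|\leq \int_0^1\sum_{i=1}^q\tilde{f}\bigl(|\rho_i(x)-\rho_i'(x)|\bigr)\,dx.
\]
Now I would observe that $\tilde{f}$ is concave on $(0,\infty)$ (since $\tilde{f}''(x)=-1/x<0$), with $\tilde{f}(0)=0$. Jensen's inequality applied to the uniform average over $i\in[q]$ gives, for each $x$,
\[
\sum_{i=1}^q\tilde{f}\bigl(|\rho_i(x)-\rho_i'(x)|\bigr)\leq q\,\tilde{f}\!\left(\frac{1}{q}\sum_{i=1}^q|\rho_i(x)-\rho_i'(x)|\right),
\]
and Jensen's inequality again, for the Lebesgue integral over $[0,1]$, gives
\[
\int_0^1 q\,\tilde{f}\!\left(\frac{1}{q}\sum_{i=1}^q|\rho_i(x)-\rho_i'(x)|\right)dx\leq q\,\tilde{f}\!\left(\frac{1}{q}\,d_1(\rho,\rho')\right).
\]
Chaining these three inequalities produces exactly the claimed bound. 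Note $d_1(\rho,\rho')/q\in[0,2/q]\subseteq[0,e]$, so $\tilde{f}$ is nonnegative where it is evaluated.

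The only genuinely delicate step is the pointwise lemma: one needs the correct concave majorant, and writing $f$ alone in place of $\tilde{f}$ would fail because of the case $b>0$, $a$ close to $b$. The linear correction $+t$ in $\tilde{f}(t)=-t\log t+t$ is exactly what absorbs the tangent-line term $t(1+\log a)$ coming from concavity at $a$. Once that lemma is in hand, the rest is just Jensen applied to a concave modulus of continuity, so I do not expect any further obstacle.
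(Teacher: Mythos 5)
Your proof is correct and follows essentially the same route as the paper: both establish the pointwise bound $|f(a)-f(b)|\leq\tilde f(|a-b|)$ for $f(x)=-x\log x$ and then pull the sum and integral inside via concavity of $\tilde f$ (Jensen). The only difference is the verification of the pointwise inequality — you use subadditivity plus the tangent-line bound at $a$ together with $at\le 1$, while the paper uses the monotonicity of $x\mapsto f(x)-f(x+t)$ together with $f(0)=f(1)=0$ and $f(1-t)\le t$ — a minor variation of the same concavity argument.
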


\begin{proof}
Because the function $f(x) = -x\log x$ is concave, for each $t>0$ the
function $x \mapsto f(x) - f(x+t)$ is weakly increasing.  It follows from
this and $f(0)=f(1)=0$ that for any $x,y \in [0,1]$,
\[
\begin{aligned}
|f(x)-f(y)| &\le \max\{|f(0) - f(|x-y|)|,|f(1-|x-y|)-f(1)|\}\\
& = \max\{f(|x-y|),f(1-|x-y|)\}\\
& \le f(|x-y|)+f(1-|x-y|)\\
& \le f(|x-y|) + |x-y|\\
& = \tilde f(|x-y|),
\end{aligned}
\]
where the last inequality holds because $f(1-t) \le t$ for all $t \in [0,1]$.
Since $d_1(\rho,\rho') = \sum_i \int_{[0,1]} |\rho_i(x) - \rho'_i(x)| \,dx$
and $\tilde f$ is concave, we have
\[
\begin{aligned}
\frac 1q\Bigl| \Ent(\rho)-\Ent(\rho')\Bigr|
&=
\frac 1q
\Bigl|
\sum_{i=1}^q
\int_0^1\Bigl( f(\rho_i(x))-f(\rho_i'(x)) \Bigr)\, dx\Bigr|\\
&\leq
\frac 1q\sum_{i=1}^q\int_0^1\tilde f(|\rho_i(x)-\rho'_i(x)|) \, dx\\
&\leq
\tilde f\Bigl(\frac 1qd_1(\rho,\rho')\Bigr).
\end{aligned}
\]
Because $\tilde f$ is continuous at $0$, this completes the proof.
\end{proof}

\section{Convergent sequences of uniformly upper regular graphs}
\label{sec:graphs->graphons}

In this section we prove Theorem~\ref{thm:G-conv-equiv}.
Theorem~\ref{thm:limit-expressions} will follow from this theorem and
Theorem~\ref{thm:inverse-limit}, which we prove in
Section~\ref{sec:inferring-upper-regularity}.

\subsection{Preliminaries} \label{subsec:graphgraphonprelim}
We start by stating some of the results from \cite{part1}, which will allow
us to replace uniformly upper regular sequences of weighted graphs by
sequences of weighted graphs with $K$-bounded tails. To state them, we will
use the cut distance between two weighted graphs $G$, $G'$ with identical
node sets $V(G)=V(G')=V$ and identical node weights
$\alpha_x=\alpha_x(G)=\alpha_x(G')$, defined as
\begin{equation}
\label{d-cut-def}
d_\square(G,G')=\max_{S,T\subseteq V}
\Bigl|
\sum_{(x,y)\in S\times T}\frac{\alpha_x\alpha_y}{\alpha_G^2} \Bigl(\beta_{xy}(G)-\beta_{xy}(G')\Bigr)
\Bigr|.
\end{equation}
Note that this distance is equal to $\|W^G-W^{G'}\|_\square$, where $W^G$ and
$W^{G'}$ are the step functions defined in \eqref{W-G-def} and
$\|\cdot\|_\square$ is the cut norm defined in \eqref{cutnorm-def}. Indeed,
for $W=W^G$, the supremum in \eqref{cutnorm-def} can easily be shown to be a
maximum that is attained for sets $S$ and $T$ which are both unions of the
intervals $I_i$.

We will also use the notion of an equipartition of the vertex set $V(G)$ of a weighted
graph $G$, defined by requiring that the weights of the parts of the partition differ from
an equal distribution by at most $\alpha_{\max}(G)$. Explicitly, a partition
$\PP=(V_1,\dots,V_k)$ of $V(G)$ is called an \emph{equipartition} if $|\alpha_{V_i}-\frac
1k\alpha_G|\leq\alpha_{\max}(G)$ for all $i\in [k]$. The following version of the weak
regularity lemma was proved in \cite{part1}.

\begin{theorem}[Theorem~C.12 in \cite{part1}] \label{thm:weak-regular-for-graphs}
Let $K\colon (0,\infty)\to (0,\infty)$ and $0<\eps<1$. Then there exist
constants $N=N(K,\eps)$ and $\eta_0=\eta_0(K,\eps)$ such that the following
holds for all $\eta \le \eta_0$: for every $(K,\eta)$-upper regular graph $G$
and each natural number $k \ge N$, there exists a equipartition
$\PP=(V_1,\dots,V_k)$ of $V(G)$ into $k$ parts, such that
\[
\dcut(G,G_\PP)\leq \eps \|G\|_1.
\]
\end{theorem}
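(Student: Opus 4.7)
The plan is to adapt the Frieze--Kannan weak regularity lemma to the $(K,\eta)$-upper regular setting, combining an iterative refinement with a \emph{truncated} $L^2$ energy (to cope with the absence of a uniform $L^2$ bound on the normalized adjacency matrix of $G$) and a final rounding step (to produce an equipartition with exactly $k$ parts).

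Fix $\delta=\eps/C_0$ for a sufficiently large absolute constant $C_0$, set the truncation level $M=K(\delta)$, and choose $N=N(K,\eps)$ large and $\eta_0=\eta_0(K,\eps)$ small so that every partition appearing in the algorithm below has all blocks of weight at least $\eta\alpha_G$ (keeping $(K,\eta)$-upper regularity applicable), and so that discretization errors of order $\alpha_{\max}(G)/\alpha_G\le\eta$ are negligible compared with $\eps$. Start from an equipartition $\PP_0$ of $V(G)$ into $\lceil 1/(2\eta)\rceil$ parts, which exists because $\alpha_{\max}(G)\le\eta\alpha_G$, and along the way I will ensure that the number of refinement steps is capped by some $T_0=O(M^2/\eps^2)$, so the final partition has at most $4^{T_0}/(2\eta)$ blocks, still of size $\ge\eta\alpha_G$ by the choice of $\eta_0$.

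Define the truncated energy
$$\Phi(\PP):=\sum_{x,y\in V(G)}\frac{\alpha_x\alpha_y}{\alpha_G^2}\min\bigl(|\beta_{xy}(G_\PP)|,\,M\|G\|_1\bigr)^2,$$
which satisfies $0\le\Phi(\PP)\le M^2\|G\|_1^2$. I iteratively build $\PP_0\le\PP_1\le\cdots$ as follows. At stage $t$, stop with $\PP^*:=\PP_t$ if $\dcut(G,G_{\PP_t})\le\eps\|G\|_1$; otherwise choose $S,T\subseteq V(G)$ witnessing the failure via \eqref{d-cut-def}, and let $\PP_{t+1}$ be the common refinement of $\PP_t$ with $\{S,S^c\}$ and $\{T,T^c\}$. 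I will prove the Pythagoras-with-truncation inequality
$$\Phi(\PP_{t+1})-\Phi(\PP_t)\ge\eps^2\|G\|_1^2-C\delta\|G\|_1^2\ge(\eps^2/2)\|G\|_1^2,$$
where the first summand is the standard Frieze--Kannan $L^2$ energy gain extracted from the violating pair $S,T$, and the second bounds the loss incurred by restricting the energy to entries below $M\|G\|_1$, controlled at every stage by the $K$-bounded tails of $G_{\PP_t}$ and $G_{\PP_{t+1}}$. This forces termination within $T_0$ iterations. Finally, given any target $k\ge N$, I subdivide each block of $\PP^*$ into sub-blocks of weight as close as possible to $\alpha_G/k$ (with vertex-rounding error $\le\alpha_{\max}(G)\le\eta\alpha_G$ per sub-block), yielding a refinement $\PP\ge\PP^*$ that is an equipartition into exactly $k$ parts; a standard contraction/conditional-expectation estimate bounds $\dcut(G,G_\PP)\le\dcut(G,G_{\PP^*})+O(\eta)\|G\|_1\le\eps\|G\|_1$ for $\eta\le\eta_0$.

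The main obstacle is the Pythagoras-with-truncation estimate. The untruncated Frieze--Kannan argument would give a full $\eps^2\|G\|_1^2$ gain per iteration, but the truncation at $M\|G\|_1$ could in principle wipe this out. The role of $(K,\eta)$-upper regularity is precisely to bound this loss: the $L^1$-mass of $G_{\PP_t}$ above $M\|G\|_1$ is at most $\delta\|G\|_1$ by the definition of $K$-bounded tails, and a careful decomposition of the contribution of $S\times T$ into its bounded and tail parts then shows that the truncation only costs $O(\delta)\|G\|_1^2$, which is dominated by the $\eps^2\|G\|_1^2$ gain once $\delta$ is chosen small compared with $\eps^2$. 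Once this step is in place, the iteration count is bounded, the final partition is available, and the equipartition refinement finishes the proof.
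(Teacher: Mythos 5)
This statement is not proved in the present paper at all: it is quoted verbatim from the companion paper \cite{part1} (Theorem~C.12), so your proposal has to stand on its own as a from-scratch proof, and as written it has two genuine gaps, both located exactly at the point where sparseness makes the Frieze--Kannan scheme delicate. First, your iteration refines $\PP_t$ by the witness sets $S,T$ of the cut-norm violation, and these sets are not under your control: the common refinement can immediately produce blocks of weight far below $\eta\alpha_G$ (indeed of weight zero), and then the $(K,\eta)$-upper regularity hypothesis says nothing about the tails of $G_{\PP_{t+1}}$ --- the hypothesis only applies to partitions whose minimum block weight is at least $\eta\alpha_G$. Your assertion that $N$ and $\eta_0$ can be chosen ``so that every partition appearing in the algorithm has all blocks of weight at least $\eta\alpha_G$'' is not something any choice of constants can enforce. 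The standard repair is to maintain (nearly) equitable partitions with a number of parts bounded in terms of $(K,\eps)$ throughout, re-balancing after each refinement; but then you must show that re-balancing does not destroy the energy increment, and in the unbounded-degree setting moving even a small-weight set of vertices can change block averages substantially, so this is a missing argument rather than a routine detail.

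Second, the ``Pythagoras-with-truncation'' inequality, which you correctly identify as the main obstacle, is not established by the sketch. Writing $M=K(\delta)$ and comparing truncated with untruncated energies, the per-step loss is bounded below by terms of the form $\E\bigl[|G_{\PP_{t+1}}|^2\one_{|G_{\PP_{t+1}}|> M\|G\|_1}\bigr]$, i.e.\ a \emph{second} moment of the large entries of the refined averaged graph, whereas $K$-bounded tails (even if it applied to $\PP_{t+1}$, see the first gap) only provides the \emph{first}-moment bound $\E\bigl[|G_{\PP}|\one_{|G_{\PP}|\ge K(\delta)\|G\|_1}\bigr]\le\delta\|G\|_1$. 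Upgrading requires an $L^\infty$ bound on the averaged entries, which is of order $\|G\|_1$ divided by the square of the minimum block fraction; this blows up as blocks shrink and as $\eta\to 0$, while $\delta$, $M$, $N$, $\eta_0$ must be chosen depending only on $(K,\eps)$ and the theorem is required for all $\eta\le\eta_0$. So the claimed loss of $O(\delta)\|G\|_1^2$ per step does not follow, and the iteration as designed is not known to terminate. (The final step is also glossed: refining $\PP^*$ to an exact equipartition into $k$ parts only gives $\dcut(G,G_\PP)\le 2\,\dcut(G,G_{\PP^*})$ via the factor-two best-approximation property, and the leftover pieces that must be merged across blocks break the refinement property, so the additive $O(\eta)\|G\|_1$ error needs an argument; but these are fixable constants-level issues compared with the two gaps above.)
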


As a consequence, given a $K$-upper regular sequence of weighted graphs $G_n$
with no dominant nodes, we can find a sequence of equipartitions $\PP_n$ of
$V(G_n)$ into $k_n$ classes such that the sequence of weighted graphs%
\footnote{If $\|G_n\|_1=0$, we choose $\iv G_n$ to have edge weights $0$.}
$\iv G_n=\frac{1}{\|G_n\|_1}(G_n)_{\PP_n}$ satisfies
\begin{equation}
\label{reg-Gn1}
\dcut\Bigl(\frac{1}{\|G_n\|_1}G_n,\iv G_n\Bigr)\to 0,
\qquad
k_n\frac{\alpha_{\max}(G_n)}{\alpha_{G_n}}\to 0,
\qquad \|\iv G_n\|_1\leq 1
\end{equation}
and
\begin{equation}
\label{reg-Gn2}
 W^{\iv G_n}\text{ has $K$-bounded tails.}
\end{equation}
We call a sequence $(\iv G_n)_{n \ge 0}$ with these properties a \emph{regularized
version} of $(G_n)_{n \ge 0}$, and $\PP_n$ a \emph{regularizing partition} for $G_n$.

To see that all these conditions can be simultaneously achieved, let $\eta_n\to 0$ be
such that $G_n$ is $(K,\eta_n)$-upper regular.  Assume that $\eps_n$ goes to zero
slowly enough that $\eta_n\leq\eta_0(K,\eps_n)$ and $\eta_n N(K,\eps_n)\to
0$ in Theorem~\ref{thm:weak-regular-for-graphs}.  Choosing $k_n=N(K,\eps_n)$, the theorem then gives a sequence
of equipartitions $\PP_n$ of $V(G_n)$ into $k_n$ classes such that \eqref{reg-Gn1}
holds.  The weight of each class of $\PP_n$ is bounded from below by
$\alpha_{G_n}/k_n -\alpha_{\max}(G_n)$, which is asymptotically greater than
$\eta_n \alpha_{G_n}$ because $\eta_n k_n \to 0$ and $k_n\alpha_{\max}(G_n)/\alpha_{G_n} \to 0$.  Thus, the bound
\eqref{K-eta-upreg} holds for $\PP=\PP_n$, establishing that
\[
\sum_{x,y\in V(G_n)}\frac{\alpha_x(G_n)\alpha_y(G_n)}{\alpha_{G_n}^2}
{|\beta_{xy}(\hat G_n)|}\one_{|\beta_{xy}(\hat G_n)|\geq K(\eps)}
\le \varepsilon
\]
for all $\eps>0$.  In other words, $W^{\iv G_n}$ has $K$-bounded tails.

\subsection{Comparing sequence of graphs to sequences of graphons}

In this section we prove three lemmas, which are the main technical lemmas
used to reduce many statements in Section~\ref{sec:results} to those in
Section \ref{sec:W-sequences}. We define
\[
\iv\SS_{\ba,\eps}(W)=\{(\alpha,\beta) \in\iv\SS_q(W) : \|\alpha-\ba\|_\infty\leq\eps\}.
\]

\begin{lemma}
\label{lem:drhophi} Let $G$ be a weighted graph, let $\PP$ be an
equipartition of $V(G)$ into $k$ classes such that $G=G_\PP$, and let
$q\in\N$. Then there exist two maps $\phi\mapsto \rho_\phi$ and
$\rho\to\bar\rho$ from the set of configurations $\phi\colon V(G)\to [q]$
into the set of fractional partitions $\FP_q$ and from $\FP_q$ to $\FP_q$,
respectively, such that the following hold:
\begin{enumerate}
\item[(i)] $W^G/\rho=W^G/\bar\rho$ for all $\rho\in\FP_q$.

\item[(ii)] $\|G\|_1( G/\phi) = W^G/\rho_{\phi}$ for all $\phi\colon V(G)\to [q]$.

\item[(iii)] For each $\rho\in\FP_q $ there exists a $\phi\colon
    V(G)\to\FP_q$ such that
\begin{equation}
\label{aprox-ai}
d_1(\rho_\phi,\bar\rho)\leq qk\frac{\alpha_{\max(G)}}{\alpha_G}.
\end{equation}
\end{enumerate}
\end{lemma}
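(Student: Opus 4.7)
The plan is to take $\rho_\phi$ to be the canonical indicator fractional partition $(\rho_\phi)_i(x)=\one_{\phi(v)=i}$ for $x\in I_v$. Unwinding definitions yields $\int\rho_{\phi,i}=\alpha_{\phi^{-1}(i)}(G)/\alpha_G=\alpha_i(G/\phi)$ and
\[
\int\rho_{\phi,i}(x)\rho_{\phi,j}(y)W^G(x,y)\,dx\,dy=\sum_{\substack{u,v\\\phi(u)=i,\phi(v)=j}}\frac{\alpha_u\alpha_v}{\alpha_G^2}\beta_{uv}(G)=\|G\|_1\beta_{ij}(G/\phi),
\]
which is exactly (ii).

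For (i), I first relabel the vertices so that within each class $V_c$ of $\PP$ the intervals $\{I_v:v\in V_c\}$ are consecutive; such a relabeling amounts to a measure-preserving bijection of $[0,1]$ and preserves the quotients $W^G/\rho$, $W^G/\bar\rho$, and the $L^1$ distance between corresponding pairs of fractional partitions. Then $J_c:=\bigcup_{v\in V_c}I_v$ is itself an interval, and the hypothesis $G=G_\PP$ forces $W^G$ to be constant on each block $J_c\times J_{c'}$, with value $w_{c,c'}$ say. Given $\rho\in\FP_q$, set $\bar a_{c,i}=|J_c|^{-1}\int_{J_c}\rho_i$, subdivide $J_c$ into consecutive subintervals $J_{c,1},\ldots,J_{c,q}$ of lengths $|J_c|\bar a_{c,i}$, and define $\bar\rho_i=\one_{J_{c,i}}$ on $J_c$. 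Block constancy of $W^G$ then yields
\[
\beta_{ij}(W^G/\bar\rho)=\sum_{c,c'}w_{c,c'}\cdot|J_c|\bar a_{c,i}\cdot|J_{c'}|\bar a_{c',j}=\sum_{c,c'}w_{c,c'}\int_{J_c}\rho_i\int_{J_{c'}}\rho_j=\beta_{ij}(W^G/\rho),
\]
and the $\alpha$-components agree because $\int_{J_c}\bar\rho_i=|J_c|\bar a_{c,i}=\int_{J_c}\rho_i$, establishing (i).

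For (iii), given $\rho$ and hence $\bar\rho$, choose $\phi$ greedily by letting $\phi(v)$ be any index $i$ maximizing $|I_v\cap J_{c(v),i}|$, where $c(v)$ is the class of $v$. Both $\rho_\phi$ and $\bar\rho$ are indicator-valued, so $d_1(\rho_\phi,\bar\rho)$ is exactly twice the measure of the set on which they disagree, and within each $J_c$ that set lies in the union of the \emph{straddlers}, i.e., those $I_v$ that meet at least two of the subintervals $J_{c,i}$. If a straddler $I_v$ contains $m_v\geq 1$ of the $q-1$ internal boundaries of the subdivision of $J_c$, then $I_v$ meets $m_v+1$ subintervals whose lengths sum to $|I_v|$, so by pigeonhole its majority overlap is at least $|I_v|/(m_v+1)$ and the disagreement on $I_v$ is at most $|I_v|m_v/(m_v+1)$; hence this straddler contributes at most $2|I_v|m_v/(m_v+1)\leq |I_v|m_v$ to $d_1$. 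Summing over straddlers in $V_c$ with $\sum_v m_v\leq q-1$ and $|I_v|\leq\alpha_{\max}(G)/\alpha_G$ gives a per-class $d_1$ bound of at most $(q-1)\alpha_{\max}(G)/\alpha_G$, and summing over the $k$ classes yields $d_1(\rho_\phi,\bar\rho)\leq qk\,\alpha_{\max}(G)/\alpha_G$. The main obstacle is precisely this quantitative bookkeeping in (iii): the elementary inequality $2m_v/(m_v+1)\leq m_v$ for $m_v\geq 1$ combined with $\sum m_v\leq q-1$ is exactly what is needed to absorb the factor of two implicit in the definition of $d_1$ on indicator partitions.
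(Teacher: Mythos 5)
Your proof is correct, but it takes a genuinely different route from the paper's. The paper keeps everything block-constant on the class intervals: there $\bar\rho$ is obtained by averaging $\rho$ over each class interval $I_\mu$, and $\rho_\phi$ is the block-averaged indicator $(\rho_\phi)_i(x)=\frac{1}{\alpha_{V_\mu}(G)}\sum_{u\in V_\mu}\alpha_u(G)\one_{\phi(u)=i}$ for $x\in I_\mu$; then (i) and (ii) are immediate from the constancy of $W^G$ on the blocks (this is where $G=G_\PP$ enters), and (iii) is obtained by splitting each class $V_\mu$ into sets $V_{\mu,1},\dots,V_{\mu,q}$ whose relative weights approximate $\int_{I_\mu}\rho_i$ to within $\alpha_{\max}(G)/\alpha_G$, so the $d_1$ error is just a sum of $qk$ terms each at most $\alpha_{\max}(G)/\alpha_G$ --- no straddler counting is needed. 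You instead make both $\rho_\phi$ and $\bar\rho$ indicator-valued ($\rho_\phi$ the raw per-vertex indicator, $\bar\rho$ a bucketed rearrangement of the class averages), which makes (ii) immediate without even using $G=G_\PP$, pushes all the work into (iii), and your majority-vote argument there is sound: the disagreement on a straddler is at most $|I_v|m_v/(m_v+1)$, the factor $2$ from $d_1$ on indicator partitions is absorbed by $2m_v/(m_v+1)\le m_v$, and $\sum_v m_v\le q-1$ per class gives the stated bound. One caveat worth knowing: the paper's averaged choices are not arbitrary. In the proof of Theorem~\ref{thm:metric-->Iq} the quantity $\Ent(\rho_\phi)$ must match the multinomial count $N(\{k_{i,\mu}\})$, and one needs $\Ent(\bar\rho)\ge\Ent(\rho)$ (Jensen), both of which hold for the averaged versions but fail for indicator-valued ones, whose entropy is zero; so your maps prove the lemma exactly as stated, but they would not serve as a drop-in replacement in that later application, which implicitly uses the specific construction \eqref{rhophi}.
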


\begin{proof}
Let $\PP=(V_1,\dots,V_k)$, and assume that the vertices in $G$ are ordered in
such a way that $V_1=\{1,2,\dots,|V_1|\}$, $V_2=\{|V_1|+1,\dots,
|V_1|+|V_2|\}$, etc. Let $x_\mu=\alpha_{V_\mu}(G)/\alpha_G$ for $\mu \in
[k]$, and let $I_1,\dots,I_k\subseteq[0,1]$ be consecutive intervals of
length $x_1$, $\dots$, $x_k$.

For $\rho\in\FP_q$ define $\bar\rho$ by averaging $\rho$ over the intervals
$I_\mu$, i.e., $\bar\rho_i(x)=\frac 1{x_\mu}\int_{I_\mu}\rho_i(y)\,dy $ if $
x\in I_\mu$. Since $W^G$ is constant on sets of the form $I_\mu\times I_\nu$
for $\mu,\nu\in [k]$, we clearly have $W^G/\rho=W^G/\bar\rho$, proving (i).

Next, given $\phi\colon V(G)\to[q]$, let $\rho_\phi$ be the fractional
$q$-partition defined by
\begin{equation}
\label{rhophi}
(\rho_\phi)_i(x)=\frac1{\alpha_{V_\mu(G)}}\sum_{u\in V_\mu(G)}\alpha_u(G)\one_{\phi(u)=i}
\qquad\text{when } x\in I_\mu.
\end{equation}
Using the fact that $\beta_{uv}(G)$ is constant on sets of the form
$V_\mu\times V_\nu$, it is then easy to check that $ \|G\|_1G/\phi = W^G /
\rho_\phi$, proving (ii).

To prove (iii), consider $\rho\in\FP_q$, and let
$\alpha_{\mu,i}=\int_{I_\mu}\rho_i(x)\,dx$. We then decompose $V_\mu$ into
$q$ sets $V_{\mu,i}$ such that
\[
|\alpha_{\mu,i}-\alpha_{V_{\mu,i}}(G)/\alpha_G|\leq \frac{\alpha_{\max}(G)}{\alpha_G}
\]
for all $i$ and $\mu$. Setting $\phi=i$ on $\bigcup_\mu V_{\mu,i}$, we then
get a map $\phi\colon V(G)\to[q]$ such that \eqref{aprox-ai} holds.
\end{proof}

\begin{lemma}
\label{lem:G-W-quotients} Let $q\in\N$ and let $(G_n)_{n \ge 0}$ be a
uniformly upper regular sequence of weighted graphs. If $\iv G_n$ is a
regularized version of $G_n$, then
\begin{equation} \label{G-W-quotients}
d_1^\Haus\Bigl(\SS_q(G_n),\iv\SS_q(W^{\iv G_n})\Bigr) \to 0.
\end{equation}
\end{lemma}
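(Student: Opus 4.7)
The plan is to establish the Hausdorff convergence~\eqref{G-W-quotients} by proving two uniform one-sided approximation bounds: (A) every element of $\SS_q(G_n)$ lies within $o(1)$ in $d_1$ of some element of $\iv\SS_q(W^{\iv G_n})$, and (B) every element of $\iv\SS_q(W^{\iv G_n})$ lies within $o(1)$ of some element of $\SS_q(G_n)$. Set $U_n=\tfrac{1}{\|G_n\|_1}W^{G_n}$ and $\iv W_n=W^{\iv G_n}$; since $\iv G_n=\tfrac{1}{\|G_n\|_1}(G_n)_{\PP_n}$, one checks that $\iv W_n=(U_n)_{\PP_n}$, where $\PP_n$ is viewed as a partition of $[0,1]$ via the standard embedding of $V(G_n)$ with $\PP_n$-blocks placed consecutively. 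By~\eqref{reg-Gn1}, $\|U_n-\iv W_n\|_\square=\dcut(\tfrac{1}{\|G_n\|_1}G_n,\iv G_n)\to 0$. For a configuration $\phi\colon V(G_n)\to[q]$, let $\sigma^\phi\in\FP_q$ denote the indicator fractional partition $\sigma^\phi_i=\one_{\phi^{-1}(i)}$; unwinding the definitions gives $(G_n/\phi)=U_n/\sigma^\phi$, so $\SS_q(G_n)=\{U_n/\sigma^\phi:\phi\colon V(G_n)\to[q]\}$.

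Direction~(A) is immediate from Lemma~\ref{lem:U-W-Haus}: for each $\phi$,
\[
d_1(U_n/\sigma^\phi,\iv W_n/\sigma^\phi)\leq q^2\|U_n-\iv W_n\|_\square,
\]
which vanishes uniformly in $\phi$, and $\iv W_n/\sigma^\phi\in\iv\SS_q(\iv W_n)$. Direction~(B) carries the main content. Given $\rho\in\FP_q$, I apply Lemma~\ref{lem:drhophi} to the weighted graph $\iv G_n$ with the equipartition $\PP_n$ (its hypotheses are satisfied because $(\iv G_n)_{\PP_n}=\iv G_n$) to produce a configuration $\phi$ with $d_1(\rho_\phi,\bar\rho)\leq qk_n\alpha_{\max}(G_n)/\alpha_{G_n}$, which is $o(1)$ by~\eqref{reg-Gn1}. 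A short computation (averaging $\one_{\phi(u)=i}$ over each $\PP_n$-block) identifies $\rho_\phi$ with the blockwise average $\overline{\sigma^\phi}$, and because $\iv W_n$ is constant on $\PP_n$-blocks part~(i) of Lemma~\ref{lem:drhophi} gives both $\iv W_n/\sigma^\phi=\iv W_n/\rho_\phi$ and $\iv W_n/\rho=\iv W_n/\bar\rho$. Lemma~\ref{lem:drho->dW/phi}, applied to $\iv W_n$ using the $K$-bounded tails in~\eqref{reg-Gn2}, then yields
\[
d_1(\iv W_n/\sigma^\phi,\iv W_n/\rho)=d_1(\iv W_n/\rho_\phi,\iv W_n/\bar\rho)\leq \eps_K\bigl(d_1(\rho_\phi,\bar\rho)\bigr)\to 0
\]
uniformly in $\rho\in\FP_q$. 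Chaining this with the direction-(A) estimate places $(G_n/\phi)=U_n/\sigma^\phi$ within $o(1)$ of $\iv W_n/\rho$, completing~(B).

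The main obstacle is direction~(B), where an arbitrary fractional partition of $[0,1]$ must be rounded to a genuine $q$-coloring of $G_n$. Both properties built into a regularized version of $(G_n)_{n\ge 0}$ enter crucially: the combinatorial rounding in Lemma~\ref{lem:drhophi} relies on $k_n\alpha_{\max}(G_n)/\alpha_{G_n}\to 0$, while the subsequent transfer from $\FP_q$-closeness to $d_1$-closeness of quotients requires the $K$-bounded tails of $\iv W_n$ supplied by~\eqref{reg-Gn2}, since $\iv W_n$ is generally unbounded in $L^\infty$.
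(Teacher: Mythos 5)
Your proof is correct and follows essentially the same route as the paper's: the cut-norm comparison between $\tfrac{1}{\|G_n\|_1}G_n$ and $\iv G_n$ handles one direction, and the rounding of a fractional partition via Lemma~\ref{lem:drhophi} combined with the $K$-bounded-tails estimate of Lemma~\ref{lem:drho->dW/phi} handles the other. The only difference is cosmetic: you chain the two estimates elementwise through the identities $G_n/\phi=U_n/\sigma^\phi$ and $\iv W_n/\sigma^\phi=\|\iv G_n\|_1(\iv G_n/\phi)$, whereas the paper routes both Hausdorff bounds through the intermediate set $\|\iv G_n\|_1\SS_q(\iv G_n)$.
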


\begin{proof}[Proof of Lemma~\ref{lem:G-W-quotients}]
We start by showing that
\begin{equation}
\label{d1Gphi-ivGphi}
 d_1(G_n/\phi,\|\iv G_n\|_1 (\iv G_n/\phi))
 \leq q^2\dcut\left(\frac1{\|G_n\|_1}G_n,\iv G_n\right)
\end{equation}
whenever $\iv G_n$ is a regularized version of $G_n$. Indeed, from the
definition of the cut distance \eqref{d-cut-def} it is easy to see that
$d_1(G/\phi, G'/\phi)\leq q^2\dcut(\frac 1{\|G\|_1}G,\frac 1{\|G'\|_1}G')$
whenever $G$ and $G'$ have identical node sets and node weights and for any
$\phi\colon V(G)\to [q]$. More generally, for any $\lambda\geq 0$, we have
$d_1(G/\phi,\lambda ( G'/\phi))\leq q^2\dcut(\frac 1{\|G\|_1}G,\frac
{\lambda}{\|G'\|_1}G')$. Setting $\lambda_n=\|\iv G_n\|_1$ proves
\eqref{d1Gphi-ivGphi}.

Next we observe that by Lemma~\ref{lem:drhophi}(ii),
\begin{equation}
\label{ivS-subset S}
{\|\iv G_n\|_1}\SS_q(\iv G_n)\subseteq \iv\SS_q(W^{\iv G_n}).
\end{equation}
On the other hand, given $W^{\iv G_n}/\rho\in \iv\SS_q(W^{\iv G_n})$, we may
use Lemma~\ref{lem:drhophi} and Lemma~\ref{lem:drho->dW/phi} to find a
quotient $\iv G_n /\phi\in\SS_q(\iv G_n)$ such that
\begin{equation} \label{G-W-quotients-2}
\begin{aligned}
d_1\Bigl(W^{\iv G_n}/\rho,
\|\iv G_n\|_1\iv G_n /\phi\Bigr) &= d_1(W^{\iv G_n}/\bar\rho,W^{\iv G_n}/\rho_\phi)\\
&\leq
\eps_K\left(qk_n\frac{\alpha_{\max(G_n)}}{\alpha_{G_n}}\right),
\end{aligned}
\end{equation}
where $k_n$ is the number of classes in the regularizing partition
corresponding to $\iv G_n$ and $K$ is the function from \eqref{reg-Gn2}.

Taking into account the second bound in \eqref{reg-Gn1}, the bound
\eqref{G-W-quotients-2} together with \eqref{ivS-subset S} implies that
\[
d_1^\Haus\Bigl(\|\iv G_n\|_1\SS_q(\iv G_n),\iv\SS_q\Bigl(W^{\iv G_n}\Bigr)\Bigr)\to 0,
\]
while \eqref{d1Gphi-ivGphi} and the fact that $\dcut(\frac 1{\|G_n\|_1}G_n,
\iv G_n)\to 0$ show that
\[
d_1^\Haus(\SS_q( G_n),\|\iv G_n\|_1\SS_q(\iv G_n))\to 0.
\]
Together, these two bounds imply \eqref{G-W-quotients}.
\end{proof}

\subsection{Proof of (i)$\Rightarrow$(ii) and (ii)$\Rightarrow$(iii) in Theorem~\ref{thm:limit-expressions}.}

Recall that by Theorem~\ref{thm:metric>upper-regular}, convergence in metric implies
uniform upper regularity.  As a consequence, (i)$\Rightarrow$(ii) follows immediately
from Lemma~\ref{lem:G-W-quotients}. Indeed,
$\delta_\square\Bigl(\frac 1{\|G_n\|_1}W^{G_n},W\Bigr)\to 0$ by \eqref{reg-Gn1}, which implies that
$\delta_\square(W^{\iv G_n},W)\to 0$. We then use the bound \eqref{G-W-quotients}
from Lemma~\ref{lem:G-W-quotients} in conjunction with \eqref{U2S3} from
Lemma~\ref{lem:U-W-Haus} to conclude that
\[
d_1^\Haus\Bigl(\SS_q(G_n),\iv\SS_q(W)\Bigr)\to 0,
\]
completing the proof of (i)$\Rightarrow$(ii) in
Theorem~\ref{thm:limit-expressions}.

The implication (ii)$\Rightarrow$(iii) follows with the help of
Theorem~\ref{thm:quotients-to-E}, the closedness of $\iv\SS_q(W)$, and the
representation \eqref{Ea-frac-def}.

\subsection{Proof of the equivalence (i)$\Leftrightarrow$(ii)$\Leftrightarrow$(iii)
in Theorem~\ref{thm:G-conv-equiv}}

Recall that by Theorem~\ref{thm:compact}, for a uniformly upper regular sequence
convergence in metric implies convergence in metric to some graphon $W$, so the
above proof of the implication (i)$\Rightarrow$(ii)  for
Theorem~\ref{thm:limit-expressions} also proves it for
Theorem~\ref{thm:G-conv-equiv}.  The implication
(ii)$\Rightarrow$(iii) follows from Theorem~\ref{thm:quotients-to-E}.

It remains to show (iii)$\Rightarrow$(i); i.e., under the assumption of
uniform upper regularity, convergence of the microcanonical ground state
energies implies convergence in metric.  Assume for the sake of contradiction
that $G_n$ does not converge in metric.  By Theorem~\ref{thm:compact} this
implies that there are two subsequences $G_n'$ and $G_n''$ and two graphons
$U$ and $W$ such that $G_n'\to U$ and $G_n''\to W$ in metric while
$\delta_\square(U,W)>0$.  By the already proved (i)$\Rightarrow$(iii) in
Theorem~\ref{thm:limit-expressions}, the microcanonical ground state energies
of $G_n'$ and $G_n''$ converge to those of $U$ and $W$, and by our assumption
that $G_n$ has convergent ground state energies, this implies that $U$ and
$W$ have identical ground state energies, contradicting
Theorem~\ref{thm:E-->delta}.

\subsection{Convergence in metric implies LD convergence}

Our main result in this section is the following theorem, which by \eqref{eq:I-W-eps->0}
proves the implication (i)$\Rightarrow$(iv) in Theorem~\ref{thm:limit-expressions} and
hence also Theorem~\ref{thm:G-conv-equiv}.

\begin{theorem}
\label{thm:metric-->Iq} Let $q\in\N$  and let $(G_n)_{n \ge 0}$ be a
uniformly upper regular sequence of weighted graphs
such that $G_n$ converges to a graphon $W$ in metric and $G_n$ has vertex weights one. Then the limit \eqref{LD-conv-alt} exists
with
\[
I_q((\alpha,\beta))=
\log q-\lim_{\eps\to 0}\sup_{\substack{\rho\in\FP_q \\ d_1(W/\rho,(\alpha,\beta))\leq \eps}}\Ent(\rho).
\]
\end{theorem}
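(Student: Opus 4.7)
The plan is to use uniform upper regularity to reduce to a block-structured graph, convert the probability to a sum of multinomial counts, and apply Stirling's formula.

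\emph{Reduction.} Via Theorem~\ref{thm:weak-regular-for-graphs} and the construction in Section~\ref{subsec:graphgraphonprelim}, fix a regularized version $\iv G_n=(G_n)_{\PP_n}/\|G_n\|_1$ with $\PP_n=(V_1,\dots,V_{k_n})$ an equipartition, so that $W^{\iv G_n}$ has $K$-bounded tails, $\delta_\square(W^{\iv G_n},W)\to 0$, $\dcut(G_n/\|G_n\|_1,\iv G_n)\to 0$, and (by taking the regularization parameter $\eps_n\to 0$ sufficiently slowly) $k_n\log|V(G_n)|/|V(G_n)|\to 0$. For $\phi\colon V(G_n)\to[q]$, let $\rho_\phi\in\FP_q$ be the fractional partition of Lemma~\ref{lem:drhophi}, which is constant on each interval $I_\mu$ corresponding to $V_\mu$ with value equal to the fraction of $V_\mu$ mapped to each color. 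Lemma~\ref{lem:drhophi}(ii) combined with the derivation of \eqref{d1Gphi-ivGphi} yields $d_1(G_n/\phi,W^{\iv G_n}/\rho_\phi)\le q^2\,\dcut(G_n/\|G_n\|_1,\iv G_n)=o(1)$ uniformly in $\phi$.

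\emph{Upper bound.} Grouping by $\rho_\phi$, which must take values of the form $(n^\mu_1,\dots,n^\mu_q)/|V_\mu|$ on $I_\mu$ with $n^\mu\in\Z_{\ge 0}^q$ summing to $|V_\mu|$, there are at most $(|V(G_n)|+1)^{qk_n}=e^{o(|V(G_n)|)}$ possible step functions; the number of $\phi$ with a given $\rho_\phi$ is $\prod_\mu\binom{|V_\mu|}{n^\mu_1,\dots,n^\mu_q}=\exp\bigl(|V(G_n)|\Ent(\rho_\phi)+O(qk_n\log|V(G_n)|)\bigr)$ by Stirling. Pick measure-preserving bijections $\sigma_n$ with $\|W^{\iv G_n}-W^{\sigma_n}\|_\square\to 0$; the identity $W^{\sigma_n}/\rho=W/(\rho\circ\sigma_n^{-1})$ (with $\Ent(\rho\circ\sigma_n^{-1})=\Ent(\rho)$) and Lemma~\ref{lem:U-W-Haus} convert $d_1((\alpha,\beta),W^{\iv G_n}/\rho_\phi)\le\eps+o(1)$ into $d_1((\alpha,\beta),W/(\rho_\phi\circ\sigma_n^{-1}))\le\eps+o(1)$ without affecting entropy. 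Assembling,
\[
\limsup_{n\to\infty}\frac{\log\PP_{q,G_n}[d_1((\alpha,\beta),G_n/\phi)\le\eps]}{|V(G_n)|}\le -\log q+\sup_{\substack{\rho\in\FP_q\\ d_1(W/\rho,(\alpha,\beta))\le\eps'}}\Ent(\rho)
\]
for every $\eps'>\eps$; the monotonicity in $\eps'$ together with $\eps\to 0$ then delivers the upper bound $-I_q((\alpha,\beta))$.

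\emph{Lower bound.} Fix $\rho\in\FP_q$ with $d_1(W/\rho,(\alpha,\beta))$ small, set $\rho'_n=\rho\circ\sigma_n$ (so $W^{\sigma_n}/\rho'_n=W/\rho$ and $\Ent(\rho'_n)=\Ent(\rho)$), and let $\bar{\rho'_n}$ be its block-average over $\PP_n$. Lemma~\ref{lem:drhophi}(i) gives $W^{\iv G_n}/\bar{\rho'_n}=W^{\iv G_n}/\rho'_n$, which is $o(1)$-close in $d_1$ to $W/\rho$ by Lemma~\ref{lem:U-W-Haus}; Jensen's inequality yields $\Ent(\bar{\rho'_n})\ge\Ent(\rho)$. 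Round $\bar{\rho'_n}$ to an achievable step function $\iv\rho_n$ with values $(n^\mu_1,\dots,n^\mu_q)/|V_\mu|$ for integers $n^\mu_i$; by Lemmas~\ref{lem:S-cont} and~\ref{lem:drho->dW/phi}, the resulting entropy drop and quotient perturbation are both $o(1)$. Stirling then supplies at least $\exp\bigl(|V(G_n)|(\Ent(\rho)-o(1))\bigr)$ maps $\phi$ with $\rho_\phi=\iv\rho_n$, each of which satisfies $d_1(G_n/\phi,(\alpha,\beta))\le\eps$ for $n$ large. Dividing by $q^{|V(G_n)|}$, taking $\liminf_n$, then the supremum over admissible $\rho$, and finally $\eps\to 0$ produces the matching lower bound.

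\emph{Main obstacle.} The chief difficulty is controlling the compounded errors from regularization, measure-preserving alignment of $W^{\iv G_n}$ with $W$, block-averaging, integer rounding, and the $O(qk_n\log|V(G_n)|)$ Stirling correction. Keeping the last negligible requires the slow-growth condition $k_n\log|V(G_n)|=o(|V(G_n)|)$, which can be arranged by choosing the regularization parameter to decay sufficiently slowly relative to $|V(G_n)|$.
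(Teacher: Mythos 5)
Your argument is correct and follows essentially the same route as the paper's proof: regularize via the weak regularity lemma, group colorings by the block-wise fractional partition $\rho_\phi$, count with Stirling, and transfer the quotient constraint from $W^{\iv G_n}$ to $W$ using metric convergence and the invariance of entropy under measure-preserving maps. The only cosmetic difference is that the paper fixes the LD radius $\eps$ first and works with an equipartition into a constant number $k=k(\eps)$ of parts (so the polynomial factors are trivially negligible), whereas you run a single regularizing sequence with $k_n\to\infty$ subject to $k_n\log|V(G_n)|=o(|V(G_n)|)$; both are fine.
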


\begin{proof}
To prove the theorem, we will need to calculate probabilities of the form
$\PP_{q,G}\bigl[d_1((\alpha,\beta),G/\phi)\leq \eps\bigr]$ for graphs $G$ that are near to
$W$ in the normalized cut distance. Assume for the moment that $G$ is well
approximated by a weighted graph whose edge weights are constant over large
blocks. More precisely, assume that there exists an equipartition
$\PP=(V_1,\dots,V_k)$ such that $G$ and $G_\PP$ are close in the cut norm. Under
such a condition, the quotients of $G$ are close to those of $G_\PP$, provided they are
suitably normalized.  More precisely, if we define $\iv G$ to be the normalized
weighted graph $\iv G=\frac{1}{\|G\|_1}(G)_{\PP}$, the quotients of $G$ are close to
those of $ \iv G$ multiplied by $\|G\|_1$ (see \eqref{d1Gphi-ivGphi}). Consider thus the
probabilities $\PP_{q,G}\bigl[d_1((\alpha,\beta),\|G\|_1(\iv G/\phi))\leq \eps\bigr]$.

Since the edge weights of $\iv G$ are constant on sets of the form
$V_\mu\times V_\nu$, we have that $\iv G/\phi=\iv G/\phi'$ if for all $i\in
[q]$ and all $\mu\in[k]$, the number of vertices in $V_\mu$ that are mapped
to $i\in [q]$ is the same in $\phi$ and $\phi'$. Denote this number by
$k_{i,\mu}=k_{i,\mu}(\phi)$. The number of configurations $\phi\colon V(G)\to
[q]$ with given $k_{i,\mu}$ is
\begin{equation}
\label{Nkimu}
N(\{k_{i,\mu}\})=\prod_{\mu=1}^k\frac{n_\mu!}{k_{1,\mu}! \dots k_{q,\mu}!},
\end{equation}
where $n_\mu=|V_\mu|$. Approximating $k!$ as $(k/e)^k$ (we analyze the error
term below), we have
\[
 N(\{k_{i,\mu}\})\approx
 \exp\Bigl( -\sum_{\mu=1}^k \sum_{i=1}^q
-{k_{i,\mu}} \log \Bigl(\frac{k_{i,\mu}}{n_\mu}\Bigr)\Bigr)
=e^{|V(G)|\Ent(\rho_\phi)},
 \]
where $\rho_\phi$ is the fractional partition defined in \eqref{rhophi}. Observing that
the number of choices for $\{k_{\mu,i}\}$ is polynomial in $|V(G)|$, and hence will not
contribute to $I_q$ (again we bound the error later), and noting further that $\|\iv G\|_1 (
\iv G/\phi) = W^{\iv G}/\rho_{\phi}$ by Lemma~\ref{lem:drhophi}, we then approximate
$\PP_{q,G}\bigl[d_1((\alpha,\beta),\|G\|_1(\iv G/\phi))\leq \eps\bigr]$ by
\[
q^{-|V(G)|}\max_{\substack{\phi\colon V(G)\to [q] \\ d_1((\alpha,\beta), W^{\iv G}/\rho_\phi)\leq\eps}}
e^{|V(G)|\Ent(\rho_\phi)}.
\]
Taking into account that (again by Lemma~\ref{lem:drhophi}) any fractional
quotient $W^{\iv G}/\rho$ can be well approximated by a quotient of the form
$W^{\iv G}/\rho_\phi$, we obtain the theorem.

The formal proof proceeds as follows.  Let $K:(0,\infty)\to(0,\infty)$ and
$(\eta_n)_{n \ge 0}$ be such that $\lim_{n\to\infty}\eta_n=0$ and $G_n$ is
$(K,\eta_n)$-upper regular. Fix $\eps>0$. By
Theorem~\ref{thm:weak-regular-for-graphs} and
Definition~\ref{def:unif-up-reg}, there are constants $k\in\N$ and
$n_0<\infty$ such that for each $n\geq n_0$ there exists an equipartition
$\PP=\PP_n=(V_1,\dots,V_k)$ of $V(G_n)$ into $k$ parts such that
\[
q^2\dcut\Bigl(G_n,(G_n)_\PP\Bigr)\leq\frac \eps{2} \|G_n\|_1
\]
and
$W^{\iv G_n}$ with $\iv G_n=\frac 1{\|G_n\|_1}(G_n)_\PP$ has $K$-bounded tails.

Let $G=G_n$ and $\iv G=\iv G_n$.
By the bound
\eqref{d1Gphi-ivGphi},
\[
d_1(G/\phi,\|\iv G\|_1 (\iv G/\phi))
 \leq q^2\dcut\left(\frac1{\|G\|_1}G,\iv G\right)\leq \frac{\eps}2,
\]
implying that
\begin{align*}
\PP_{q,G}\bigl[d_1((\alpha,\beta),\|\iv G\|_1(\iv G/\phi))\leq \eps/2\bigr]
&\leq \PP_{q,G}\bigl[d_1((\alpha,\beta),G/\phi)\leq \eps\bigr]\\
&\leq \PP_{q,G}\bigl[d_1((\alpha,\beta),\|\iv G\|_1(\iv G/\phi))\leq 3\eps/2\bigr].
\end{align*}
Given a configuration $\phi\colon V(G)\to[q]$, let $k_{i,\mu}(\phi)$ be the number of
vertices $v\in V_\mu$ such that $\phi(v)=i$, and let $N(\{k_{i,\mu}\})$ be the number of
maps $\phi$ leading to the same $k_{i,\mu}$; see \eqref{Nkimu} above. Bounding the
number of choices for $\{k_{\mu,i}\}$ by $|V(G)|^{qk}$ and observing that $\|\iv G\|_1(
\iv G/\phi) = W^{\iv G}/\rho_{\phi}$, we then bound
\[
\begin{aligned}
\max_{\phi:d_1((\alpha,\beta),W^{\iv G}/\rho_\phi)\leq \eps/2}N(\{k_{i,\mu}(\phi)\})
&\leq
q^{|V(G)|}
\PP_{q,G}\bigl[d_1((\alpha,\beta),G/\phi)\leq \eps\bigr]
\\
&\leq |V(G)|^{qk}\max_{\phi:d_1((\alpha,\beta),W^{\iv G}/\rho_\phi)\leq 3\eps/2}N(\{k_{i,\mu}(\phi)\}).
\end{aligned}
\]
Since $(k/e)^k\leq k!\leq ek (k/e)^k$,
\[
\Bigl(\frac 1{e|V(G)|}\Bigr)^{kq}
e^{\Ent(\rho_\phi)|V(G)|}
\leq
N(\{k_{i,\mu}\})\leq (e|V(G)|)^k e^{\Ent(\rho_\phi)|V(G)|},
\]
implying
\[
\begin{aligned} 
q^{|V(G)|}\PP_{q,G}\bigl[&d_1((\alpha,\beta),G/\phi)\leq \eps\bigr]\\
&\leq
(e|V(G)|)^{k(q+1)}\max_{\substack{\phi\colon V(G)\to [q] \\ d_1((\alpha,\beta),W^{\iv G}/\rho_\phi)\leq 3\eps/2}}e^{|V(G)|\Ent(\rho_\phi)}
\\
&\leq
(e|V(G)|)^{k(q+1)}\sup_{\substack{\rho\in\FP_q \\ d_1((\alpha,\beta),W^{\iv G}/\rho)\leq 3\eps/2}}e^{|V(G)|\Ent(\rho)}
\end{aligned}
\]
and
\[
\begin{aligned} 
q^{|V(G)|}\PP_{q,G}\bigl[&d_1((\alpha,\beta),G/\phi)\leq \eps\bigr]\\
 &\geq (e|V(G)|)^{-kq}\max_{\substack{\phi\colon V(G)\to
[q] \\ d_1((\alpha,\beta),W^{\iv G}/\rho_\phi)\leq \eps/2}}e^{|V(G)|\Ent(\rho_\phi)}.
\end{aligned}
\]
Next we use Lemma~\ref{lem:drhophi} to approximate an arbitrary fractional
partition $\rho\in\FP_q$ by a fractional partition of the form $\rho_\phi$,
with a error of $qk/|V(G)|$ in the $d_1$ distance. With the help of
Lemmas~\ref{lem:drho->dW/phi} and \ref{lem:S-cont}, we can ensure that for
$n$ (and hence $|V(G)|=|V(G_n)|$) large enough, the resulting errors in
$d_1((\alpha,\beta),W^{\iv G}/\rho)$ and $\Ent(\rho)$ are bounded by $\eps/4$
and $\eps$, respectively, leading to the lower bound
\[
\begin{aligned}
q^{|V(G)|}\PP_{q,G}&\bigl[d_1((\alpha,\beta),G/\phi)\leq \eps\bigr]
\\&\geq
(e|V(G)|)^{-kq}\sup_{\substack{\rho\in\FP_q \\ d_1((\alpha,\beta),W^{\iv G}/\rho)\leq \eps/4}}e^{|V(G)|(\Ent(\rho)-\eps)}.
\end{aligned}
\]
To conclude the proof, we note that if $G_n\to W$ in metric, then
$\delta_\square(W^{\iv G_n},W)\to 0$. Taking into account
Lemma~\ref{lem:drho->dW/phi} and the fact that the entropy $\Ent(\rho)$ is
invariant under measure preserving transformations, we get that for $n$
sufficiently large
\[
\begin{aligned}
(e|V(G_n)|)^{-kq}\sup_{\substack{\rho\in\FP_q \\ d_1((\alpha,\beta),W/\rho)\leq
\eps/8}}&e^{|V(G_n)|(\Ent(\rho)-\eps)}\\
&\leq
q^{|V(G_n)|}\PP_{q,G_n}\bigl[d_1((\alpha,\beta),G_n/\phi)\leq \eps\bigr]
\\
&\leq
(e|V(G)|)^{k(q+1)}\sup_{\substack{\rho\in\FP_q \\ d_1((\alpha,\beta),W/\rho)\leq 2\eps}}e^{|V(G)|\Ent(\rho)}.
\end{aligned}
\]
As a consequence
\[
\begin{aligned}
-\log q -\eps+\sup_{\substack{\rho\in\FP_q \\ d_1((\alpha,\beta),W/\rho)\leq
\eps/8}} \Ent(\rho)
&\leq
\liminf_{n\to\infty}\frac{\log\PP_{q,G_n}\bigl[d_1((\alpha,\beta),G_n/\phi)\leq \eps\bigr]}{|V(G_n)|}
\\
&\leq
\limsup_{n\to\infty}\frac{\log\PP_{q,G_n}\bigl[d_1((\alpha,\beta),G_n/\phi)\leq \eps\bigr]}{|V(G_n)|}
\\
&\leq
-\log q+\sup_{\substack{\rho\in\FP_q \\ d_1((\alpha,\beta),W/\rho)\leq 2\eps}}\Ent(\rho).
\end{aligned}
\]
Sending $\eps\to 0$ completes the proof.
\end{proof}

\subsection{Completion of the proofs of Theorems~\ref{thm:G-conv-equiv} and \ref{thm:limit-expressions}}
\label{sec:completion}

To complete the proof of Theorem~\ref{thm:G-conv-equiv}, we still need to
show for graphs with node weights one, statements (iv) and (v) are equivalent
to the other statements of the theorem. We will also have to establish the
limit expressions given in Theorem~\ref{thm:limit-expressions}.

By Theorem~\ref{thm:metric-->Iq}, we know the implication
(i)$\Rightarrow$(iv) in Theorem~\ref{thm:G-conv-equiv}, and we also know that
the rate function is given by \eqref{IW-def}, as claimed in
Theorem~\ref{thm:limit-expressions}(iv). Finally, by
Theorem~\ref{thm:LD->F-and-Sq}(ii), in Theorem~\ref{thm:G-conv-equiv}
statement (iv) implies statement (v), and by Lemma~\ref{lem:FatoFetc}(iii),
this in turn implies statement (iii) of Theorem~\ref{thm:G-conv-equiv},
completing the proof of Theorem~\ref{thm:G-conv-equiv}.

Theorem~\ref{thm:inverse-limit}, which we prove in the next section, is
nearly enough to deduce Theorem~\ref{thm:limit-expressions} from
Theorem~\ref{thm:G-conv-equiv}.  The only missing piece is the explicit limit
expressions stated in Theorem~\ref{thm:limit-expressions} for how limiting
quotients, ground state energies, free energies, and large deviations rate
function depend on $W$.  So far, we have dealt with all of them except for
the microcanonical free energies. Since we already have shown that
convergence in metric for graphs with node weights one implies LD convergence
with rate function given by \eqref{IW-def}, this follows from the following
lemma.

\begin{lemma} \label{lem:W-LD=>E}
Let $W$ be a graphon, and let $(G_n)_{n \ge 0}$ be a sequence of weighted
graphs. If $(G_n)_{n \ge 0}$ is LD convergent with rate function
$I_q=I_q(\cdot,W)$ as defined in \eqref{IW-def}, then the microcanonical free
energies of $(G_n)_{n \ge 0}$ converge to those of $W$, as defined in
\eqref{FaW-def}.
\end{lemma}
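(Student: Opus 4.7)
The plan is to chain Theorem~\ref{thm:LD->F-and-Sq}(ii) with an algebraic identification, so that the limit value of the microcanonical free energies coming from the rate function $I_q(\cdot,W)$ can be rewritten as $\FF_\ba(W,J)$ by unfolding \eqref{IW-def} and swapping two infima.

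By hypothesis we may apply Theorem~\ref{thm:LD->F-and-Sq}(ii), which gives that the microcanonical free energies of $(G_n)$ converge to
\[
F_\ba(I_q(\cdot,W),J) = \inf_{(\alpha,\beta)\in\SS_\ba}\bigl(-\langle\beta,J\rangle + I_q((\alpha,\beta),W)\bigr) - \log q.
\]
Substituting \eqref{IW-def}, noting that $I_q((\alpha,\beta),W)=\infty$ off $\iv\SS_q(W)$, and observing that the two $\log q$ terms cancel, this rewrites as the double infimum
\[
\inf_{(\alpha,\beta)\in\SS_\ba\cap\iv\SS_q(W)}\ \inf_{\rho\in\FP_q:\,W/\rho=(\alpha,\beta)}\bigl(-\langle\beta,J\rangle-\Ent(\rho)\bigr).
\]
I would then interchange the two infima and parametrize instead by $\rho\in\FP_q$ with $\alpha(\rho)=\ba$ (and $W/\rho\in\SS_\ba$), using the identity $\langle\beta(W/\rho),J\rangle = -\EE_\rho(W,J)$ from the definitions in Section~\ref{sec:limit-expressions}, to obtain
\[
\inf_{\substack{\rho\in\FP_q\\\alpha(\rho)=\ba,\ W/\rho\in\SS_\ba}}\bigl(\EE_\rho(W,J)-\Ent(\rho)\bigr).
\]

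The main obstacle is that this expression carries the side constraint $W/\rho\in\SS_\ba$, i.e., $\sum_{i,j}|\beta_{ij}(W/\rho)|\le 1$, whereas $\FF_\ba(W,J)$ in \eqref{FaW-def} has no such restriction. I would dispose of it using the bound $\sum_{i,j}|\beta_{ij}(W/\rho)|\le\|W\|_1$, valid for every $\rho\in\FP_q$ because $\sum_i\rho_i\equiv 1$. Under the natural normalization $\|W\|_1\le 1$ that applies in the setting of the paper (where $W$ arises as a metric limit of normalized graphons $\frac{1}{\|G_n\|_1}W^{G_n}$, each of which has $L^1$ norm at most one), the constraint is automatically vacuous and the infimum coincides with $\FF_\ba(W,J)$, completing the proof.
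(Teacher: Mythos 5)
Your proposal is essentially the paper's own proof: apply Theorem~\ref{thm:LD->F-and-Sq}(ii), unfold \eqref{IW-def} (the two $\log q$ terms cancelling), interchange the two infima, and use $-\langle\beta(W/\rho),J\rangle=\EE_\rho(W,J)$ to recognize \eqref{FaW-def}. The one place you diverge is the side constraint $\sum_{i,j}|\beta_{ij}(W/\rho)|\le 1$: the paper's displayed computation simply takes the infimum over $\iv\SS_\ba$ (no constraint on $\beta$) without commenting on the discrepancy with the statement of Theorem~\ref{thm:LD->F-and-Sq}(ii), where the infimum runs over $\SS_\ba\subseteq\SS_q$, whereas you flag the issue and remove it by assuming $\|W\|_1\le 1$ (your bound $\sum_{i,j}|\beta_{ij}(W/\rho)|\le\|W\|_1$ is correct since $\sum_i\rho_i\equiv 1$). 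Note, however, that $\|W\|_1\le 1$ is not among the hypotheses of the lemma, which allows an arbitrary graphon $W$; it does hold in the situations where the lemma is actually invoked (limits of the normalized graphons $\frac{1}{\|G_n\|_1}W^{G_n}$), so your final step is an imported assumption rather than a consequence of the stated hypotheses. On this point you are, if anything, more scrupulous than the paper, which passes over the constraint in silence; otherwise the two arguments coincide.
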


\begin{proof}
By Theorem~\ref{thm:LD->F-and-Sq}(ii), the assumption implies convergence of
the microcanonical free energies, with the limiting free energies given by
\begin{align*}
F_\ba(J)
&= \inf_{(\alpha,\beta)\in\iv\SS_{\ba}} \bigl(-\langle \beta,J\rangle+ I_q((\alpha,\beta),W)\bigr)-\log q.
\\
&= \inf_{(\alpha,\beta)\in\iv\SS_{\ba}} \inf_{\substack{\rho \in \FP_q
    \\ W/\rho = (\alpha,\beta)}} \bigl(-\langle \beta,J\rangle -
\Ent(\rho)\bigr)
\\
&= \inf_{\substack{\rho \in \FP_q
    \\ \alpha(\rho) = \ba}} \bigl(-\langle \beta(W/\rho),J\rangle -
\Ent(\rho)\bigr)\\
&= \FF_\ba(W,J),
\end{align*}
as desired.
\end{proof}

\section{Inferring uniform upper regularity}
\label{sec:inferring-upper-regularity}

In this section we prove Theorem~\ref{thm:inverse-limit}. We have already
proved a number of implications between the four conditions in the theorem
statement. Specifically, from Lemma~\ref{lem:W-LD=>E} we know that (iv)
implies (iii). From Lemma~\ref{lem:FatoFetc}(iii) (and the analogous
assertion that $\EE_\ba(W,J) = \lim_{\lambda \to \infty} \lambda^{-1}
\FF_\ba(\lambda J)$ with an essentially identical proof) we know that (iii)
implies (i). From Theorem~\ref{thm:quotients-to-E} we deduce that (ii)
implies (i). Thus, it remains to show that (i) implies that $(G_n)_{n \ge 0}$
is uniformly upper regular, which is the statement of the following
proposition.

\begin{prop} \label{prop:E=>upp-reg}
Let $(G_n)_{n \ge 0}$ be a sequence of weighted graphs with no dominant nodes
and $W$ a graphon. If the microcanonical ground state energies of $G_n$
converge to those of $W$, then $(G_n)_{n \ge 0}$ is uniformly upper regular.
\end{prop}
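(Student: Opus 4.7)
The plan is to prove Proposition~\ref{prop:E=>upp-reg} by first converting the ground state energy hypothesis into distributional convergence of the normalized graphons $W^{\iv G_n}:=W^{G_n}/\|G_n\|_1$ to $W$, then deducing uniform integrability of $\{W^{\iv G_n}\}$ from the fact that $W\in L^1$, and finally upgrading to uniform upper regularity via a Jensen-type comparison. The no-dominant-nodes hypothesis plays a critical role in the bridge between graph-side and graphon-side ground state energies.

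First I would pass from graph-side to graphon-side ground state energy convergence, i.e., show $\EE_\ba(W^{\iv G_n},J)\to\EE_\ba(W,J)$ for all $q\in\N$, $\ba\in\PD_q$, and symmetric $J$. The identity $E_\phi(G_n,J)=\EE_{\rho_\phi}(W^{\iv G_n},J)$ from the proof of Lemma~\ref{lem:drhophi}(ii) already gives the desired convergence when the infimum is taken over the restricted class of hard fractional partitions $\rho=\rho_\phi$ associated to vertex colorings. To lift this to the infimum over all $\rho\in\FP_q$, I would use a randomized-rounding argument: assign each vertex $u$ independently to color $i$ with probability equal to the average of $\rho_i$ over the interval $I_u$. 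The expected rounded energy agrees with the fractional energy in its off-diagonal contributions, and the diagonal correction is bounded by $O(\|J\|_\infty\,\alpha_{\max}(G_n)/\alpha_{G_n})$, which vanishes by the no-dominant-nodes hypothesis.

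Next, I would follow the arguments of Lemmas~\ref{lem:E-to-C} and~\ref{lem:C-to-distribution} in asymptotic form to derive distributional convergence of $W^{\iv G_n}$ to $W$ on $[0,1]^2$. Graphon energy convergence gives $\CC(W^{\iv G_n},Y)\to\CC(W,Y)$ first for step graphons $Y=W^H$ with uniform vertex weights $\ba_q=(1/q,\dots,1/q)$, and then for arbitrary bounded $Y$ via the step-function approximation of Lemma~\ref{lem:as-approx} combined with the uniform $L^1$ bound $\|W^{\iv G_n}\|_1=1$. Applying this with $Y=\one_{\text{top}_\lambda(W)}$ (fixed), together with $Y_n=\one_{\text{top}_\lambda(W^{\iv G_n})}$ (varying with $n$) and the alignment-and-rearrangement calculations from Section~\ref{sec:Proof-E2Dist}, yields for each $\lambda\in[0,1]$ that the top-$\lambda$ $L^1$ mass $\int_{\text{top}_\lambda(W^{\iv G_n})}|W^{\iv G_n}|$ converges to $\int_{\text{top}_\lambda(W)}|W|$; since these are monotone continuous functions of $\lambda$, the convergence is uniform in $\lambda$.

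Finally, because $W\in L^1$ its top-$\lambda$ $L^1$ mass tends to zero as $\lambda\to0$, and combined with the previous step this yields uniform integrability of $\{W^{\iv G_n}\}$: for every $\eps>0$ there is $K(\eps)$ with $\sup_n\int|W^{\iv G_n}|\,\one_{|W^{\iv G_n}|\geq K(\eps)}\leq\eps$. To upgrade uniform integrability to uniform upper regularity, I would invoke Jensen's inequality applied to the convex function $x\mapsto(x-K)_+$ to show that the block averages $(W^{\iv G_n})_\PP$ have $L^1$ tails controlled by those of $|W^{\iv G_n}|$ itself (up to a factor of two in the threshold); this supplies the function $K(\eps)$ demanded by Definition~\ref{def:unif-up-reg}, while the no-dominant-nodes hypothesis supplies the sequence $\eta_n\to0$. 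The main obstacle throughout is the first step, bridging graph-side (hard colorings) and graphon-side (fractional partitions) ground state energies on the step graphons $W^{\iv G_n}$: controlling the diagonal rounding correction is precisely the reason the no-dominant-nodes assumption appears in the hypotheses of the proposition.
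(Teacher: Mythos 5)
Your reduction of uniform upper regularity to uniform integrability of the normalized graphons $W_n:=\frac{1}{\|G_n\|_1}W^{G_n}$, and the final Jensen step (Steps 3--4), are fine; the fatal problem is that the route you propose for establishing that uniform integrability is circular. In the asymptotic version of Lemma~\ref{lem:E-to-C}, the ground state energy hypothesis only gives $\CC(W_n,Y)\to\CC(W,Y)$ for \emph{step} graphons $Y=W^H$; to extend to the bounded graphons you actually need (the indicators $\one_{\text{top}_\lambda(W)}$ and, worse, the $n$-dependent indicators $\one_{\text{top}_\lambda(W_n)}$), the paper's estimate bounds the approximation error by $2\|Y\|_\infty\|W_n\one_{|W_n|\ge K}\|_1+K\|Y-Y_m\|_1$. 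Making the first term small \emph{uniformly in $n$} is exactly the uniform integrability you are trying to prove; the uniform bound $\|W_n\|_1=1$ that you invoke does not substitute for it, since $\|Y-Y_m\|_\infty$ does not tend to $0$ for general bounded $Y$. The same issue recurs when you pair $W_n$ against the varying test functions $\one_{\text{top}_\lambda(W_n)}$: pointwise convergence of $\CC(W_n,\cdot)$ on a class of fixed $Y$'s gives no control there without an equicontinuity input, which again is uniform integrability. Note also that nothing weaker can rescue this: mere convergence in distribution of $W_n$ to $W$ is strictly weaker than what you need (for the clique-plus-isolated-vertices graphs of Example~\ref{ex:1}, $W_n\to 0$ in distribution while $\int_{\text{top}_\lambda(W_n)}|W_n|=1$ for every fixed $\lambda$ and large $n$), so the top-$\lambda$ mass convergence cannot be obtained from softer distributional information.

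There are also gaps in Step 1 as written: the randomized rounding yields only one inequality, namely $E_{\ba,\eps}(G_n,J)\le\EE_\ba(W_n,J)+o(1)$, hence $\liminf_n\EE_\ba(W_n,J)\ge\EE_\ba(W,J)$; the reverse inequality requires comparing the exact-$\ba$ optimum $\EE_\ba(W_n,J)$ with the $\eps$-tolerant one uniformly in $n$, which you do not address (the paper's tool for this, Lemma~\ref{lem:drho->dW/phi}, assumes $K$-bounded tails---again the conclusion; this particular gap is patchable for $\ba=(1/q,\dots,1/q)$ by confining the repair of class sizes to the set where $\int|W_n(x,y)|\,dy$ is $O(q)$, but you would have to say so). Moreover the claimed $O(\|J\|_\infty\alpha_{\max}(G_n)/\alpha_{G_n})$ bound on the diagonal correction is not correct for weighted graphs with self-loops; it is only bounded by $\|J\|_\infty$ times the fraction of $\|G_n\|_1$ carried by the diagonal blocks. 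The paper's actual proof avoids all of this by never leaving the graph side: for any equipartition of $V(G_n)$ into $q$ parts, the tail sum in the definition of upper regularity equals $-E_\phi(G_n,J)$ for a symmetric $J\in\{-1,0,1\}^{q\times q}$ with at most $4q^2/K$ nonzero entries, so the hypothesis bounds it by $-\EE_{\bq}(W,J)+o(1)\le\sup_{\lambda(S)\le 4/K}\int_S|W|+o(1)$, and only the integrability of the single limit graphon $W$ is ever used; a separate combinatorial lemma (Lemma~\ref{lem:equi-upper-reg}) then upgrades equipartition upper regularity to upper regularity over all partitions with parts of weight at least $\eta\alpha_G$. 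If you want to salvage your route, you would need an independent argument giving uniform integrability before invoking the rearrangement machinery, at which point the machinery is no longer needed.
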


In order to prove this proposition, we introduce a notion of
\emph{equipartition upper regularity}, where instead of considering all
partitions of the vertex set with no part having weight smaller than $\eta
\alpha_G$, we consider all equipartitions into $q$ parts. Following the
definition, we prove a lemma which says that the two notions of upper
regularity are qualitatively equivalent.

\begin{definition}
Let $K\colon (0,\infty)\to(0,\infty)$ be any function and let $q \in \N$. A
weighted graph $G$ is \emph{$(K,q)$-equipartition upper regular} if
$\alpha_{\max}(G) \leq \alpha_G/(2q)$ and for every $\eps > 0$ and
equipartition $\cP$ of $V(G)$ into $q$ parts,
\[
  \sum_{i,j \in [q]} |\beta_{ij}(G/\cP)|\one_{|\beta_{ij}(G/\cP)|
  \geq K(\eps) \alpha_i(G/\cP)\alpha_j(G/\cP)}
  \leq \eps.
\]
\end{definition}

Equipartitions are defined in Section~\ref{subsec:graphgraphonprelim}.  We
use $\beta_{ij}(G/\cP)$ and $\beta_{ij}(G/\phi)$ as synonyms (see
\eqref{G-phi-def} for the definition), where the function $\phi \colon V(G)
\to [q]$ defines the partition $\cP$ as the preimages of the points in $[q]$,
but recall from \eqref{G_P-def} that $\beta_{ij}(G_\cP)$ is normalized
differently from $\beta_{ij}(G/\cP)$.

Using \eqref{G-phi-alpha-def}, \eqref{G-phi-def}, \eqref{G_P-def}, and
\eqref{K-eta-upreg}, it is clear that if $G$ is $(K, \eta)$-upper regular,
then it is $(K, q)$-equipartition upper regular for $q \leq 1/(2\eta)$, since
in every equipartition of $V(G)$ into $q$ parts, the weight of each part is
at least
\[
\alpha_G/q - \alpha_{\max}(G) \ge \alpha_G/q-\eta\alpha_G \geq \eta\alpha_G.
\]
Conversely, we also have the following.

\begin{lemma}
\label{lem:equi-upper-reg} Let $K' \colon (0,\infty) \to (0,\infty)$ be any
function, and let
\[
K(\eps) = \max\{4\eps^{-1}K'(\eps/4), 16\eps^{-2}\}.
\]
Let $\eta > 0$ and $q_0 = \eta^{-2}$. If a weighted graph $G$ is $(K',
q')$-equipartition upper regular for some $q' \geq q_0$, then $G$ is $(K,
\eta)$-upper regular.
\end{lemma}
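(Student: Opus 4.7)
Rewriting the left-hand side of \eqref{K-eta-upreg} in terms of quotients via \eqref{G-phi-alpha-def} and \eqref{G-phi-def}, it suffices to prove that
\[
T(\PP,\eps):=\sum_{i,j\in[q]}|\beta_{ij}(G/\PP)|\,\one_{|\beta_{ij}(G/\PP)|\ge K(\eps)\alpha_i(G/\PP)\alpha_j(G/\PP)}\le \eps
\]
for every $\eps>0$ and every partition $\PP=(V_1,\dots,V_q)$ of $V(G)$ with $\min_i\alpha_{V_i}\ge\eta\alpha_G$. Because $\sum_{i,j}|\beta_{ij}(G/\PP)|\le 1$, the case $\eps\ge 1$ is immediate, and the trivial partition $q=1$ is handled by $K(\eps)\ge 16\eps^{-2}>1$. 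Thus I reduce to $\eps<1$ and $q\ge 2$, which forces $\eta\le 1/2$. The bound $\alpha_{\max}(G)\le \alpha_G/(2q')\le \eta^2\alpha_G/2\le \eta\alpha_G$ also gives the ``no dominant node'' half of upper regularity.

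My strategy is to refine $\PP$ to an equipartition $\mathcal{Q}=(U_1,\dots,U_{q'})$ of $V(G)$ into $q'$ parts. Pick integers $m_i$ close to $\alpha_{V_i}q'/\alpha_G$ with $\sum_i m_i=q'$; since $q'\ge \eta^{-2}$, we have $\alpha_{V_i}q'/\alpha_G\ge \eta^{-1}\ge 2$, so each $m_i\ge 2$, and the constraint $\alpha_{\max}(G)\le \alpha_G/(2q')$ lets me subdivide $V_i$ into $m_i$ blocks of weight close to $\alpha_G/q'$. Applying $(K',q')$-equipartition upper regularity to $\mathcal{Q}$ at level $\eps/4$ gives
\[
\sum_{k,\ell\in[q']}|\beta_{k\ell}(G/\mathcal{Q})|\,\one_{|\beta_{k\ell}(G/\mathcal{Q})|\ge K'(\eps/4)\alpha_k(G/\mathcal{Q})\alpha_\ell(G/\mathcal{Q})}\le \eps/4.
\]
Since $\mathcal{Q}$ refines $\PP$, one has $\beta_{ij}(G/\PP)=\sum_{U_k\subseteq V_i,\,U_\ell\subseteq V_j}\beta_{k\ell}(G/\mathcal{Q})$, which I split into a ``big'' part $\beta_{ij}^{\mathrm{big}}$ (summands exceeding the above threshold) and a ``small'' part $\beta_{ij}^{\mathrm{small}}$. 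Using $\sum_{U_k\subseteq V_i}\alpha_k(G/\mathcal{Q})=\alpha_i(G/\PP)$, the small part is bounded by $K'(\eps/4)\alpha_i(G/\PP)\alpha_j(G/\PP)$, while $\sum_{i,j}|\beta_{ij}^{\mathrm{big}}|\le \eps/4$. With $K(\eps)\ge 4K'(\eps/4)/\eps$, whenever the indicator in $T(\PP,\eps)$ equals $1$ the small part contributes at most an $\eps/4$-fraction of $|\beta_{ij}(G/\PP)|$, so $|\beta_{ij}^{\mathrm{big}}|\ge(1-\eps/4)|\beta_{ij}(G/\PP)|$ there; summing gives $(1-\eps/4)T(\PP,\eps)\le \eps/4$, hence $T(\PP,\eps)\le \eps/(4-\eps)\le \eps$.

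The principal obstacle is producing a \emph{genuine} equipartition refinement of $\PP$: since $\alpha_G/q'$ need not exactly divide $\alpha_{V_i}$, the block weight $\alpha_{V_i}/m_i$ can differ from $\alpha_G/q'$ by up to $\alpha_G/(q'm_i)$, which, combined with the vertex-level granularity $\alpha_{\max}(G)$, may push blocks outside the equipartition tolerance of Section~\ref{subsec:graphgraphonprelim}. Carefully quantifying this deviation and absorbing the resulting boundary error via the $16\eps^{-2}$ slack in $K(\eps)$ is the core technical step of the proof.
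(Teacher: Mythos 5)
Your reduction to a genuine equipartition refinement of $\PP$ is the step that fails, and it is not a matter of bookkeeping that the $16\eps^{-2}$ slack in $K$ can absorb. The hypothesis of $(K',q')$-equipartition upper regularity applies only to partitions whose parts all have weight within $\alpha_{\max}(G)$ of $\alpha_G/q'$ (see the definition of equipartition in Section~\ref{subsec:graphgraphonprelim}), and a partition $\PP$ with $\min_i\alpha_{V_i}\ge\eta\alpha_G$ need not admit \emph{any} such refinement: if, say, all vertex weights equal $\alpha_G/n$ with $n$ huge and $\alpha_{V_1}=2.5\,\alpha_G/q'$, then every subdivision of $V_1$ leaves blocks whose weights are off from $\alpha_G/q'$ by a constant fraction of $\alpha_G/q'$, far beyond the tolerance $\alpha_{\max}(G)$. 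Your own estimate shows the deviation is of order $\alpha_G/(q'm_i)+\alpha_{\max}(G)$, which is not within the equipartition tolerance, so you cannot invoke the hypothesis on $\mathcal{Q}$; and enlarging $K(\eps)$ does nothing here, since the slack only loosens the tail threshold, not the equipartition requirement in the hypothesis. (Your small/big splitting and the bound $K'(\eps/4)/K(\eps)\le\eps/4$ are fine \emph{given} a refinement, and are essentially the computation the paper also does, with the small part summed against \eqref{eq:S-K-bound} instead of per pair.)

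The missing idea is how to dispose of the leftover weight in each $V_i$. The paper's proof cuts each $V_i$ into blocks of weight $\approx\alpha_G/q'$ plus a remainder $V_{i,0}$ of weight $<\alpha_G/q'$ chosen to consist of the \emph{lowest-degree} vertices of $V_i$; the remainders are pooled into $V_0$ and split arbitrarily to complete a genuine equipartition, thereby abandoning the refinement property exactly on those classes. The pairs $(i,j)$ are then compared only on the refined part, and all edges meeting $V_0$ are controlled by the degree selection: their total weight is at most $2/(q'\eta)\le 2\eta\le\eps/2$, where $\eps\ge 4\eta$ is exactly where the $16\eps^{-2}$ term enters (via $\eta^2K(\eps)\le 1$, the opposite case being trivial because then the exceptional set is empty by \eqref{eq:S-K-bound}). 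Without some such device for the mismatched weight --- degree-ordered remainders or an equivalent control of the edges incident to it --- your ``core technical step'' cannot be completed, so as written the proof has a genuine gap.
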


\begin{proof}
By scaling the vertex weights, we may assume without loss of generality that
$\alpha_G = 1$. Let $\cP=(V_1,\dots,V_q)$ be a partition of $V(G)$ into $q$
classes, where $\alpha_{V_i} \geq \eta$ for each $i \in [q]$, and let $\eps >
0$. Define
\[
S := \{(i,j)\in [q]\times [q] : |\beta_{ij}(G/\cP)| \geq K(\eps)
\alpha_i(G/\cP)\alpha_j(G/\cP)\}.
\]
We need to prove that
\begin{equation} \label{eq:K-tail-e}
\sum_{(i,j)\in S}|\beta_{ij}(G/\cP)| \leq \eps.
\end{equation}
Since $\alpha_i(G/\cP) \geq \eta$ for all $i \in [q]$ and $\sum_{i,j\in[q]}
|\beta_{ij}(G/\cP)| \leq 1$, we have
\begin{equation} \label{eq:S-K-bound}
\eta^2 |S| \leq \sum_{(i,j)\in S} \alpha_i(G/\cP)\alpha_j(G/\cP)
\leq \frac{1}{K(\eps)} \sum_{(i,j)\in S} |\beta_{ij}(G/\cP)|
\leq \frac{1}{K(\eps)}.
\end{equation}
Thus if $\eta^2 K(\eps) > 1$, then $S = \emptyset$ and \eqref{eq:K-tail-e}
trivially holds. So assume from now on that $\eta^2 K(\eps) \leq 1$. Since
$K(\eps) \geq 16\eps^{-2}$ by assumption, we have
\begin{equation}
  \label{eq:e-eta}
  \eps \geq 4\eta.
\end{equation}

For each $x \in V(G)$, define the (weighted) degree of $x$ to
be
\[
 \deg_x(G) = \sum_{y \in V(G)} \alpha_y(G) \abs{\beta_{xy}(G)}.
\]
We construct an equipartition $\cP'$ of $V(G)$ as follows. For each $i \in
[q]$, we partition $V_i$ into subsets $V_{i,0},V_{i,1}, \dots, V_{i,k_i}$
such that
\[
\alpha_{V_{i,k}}(G) \in
\Bigl[\frac 1{q'}- \alpha_{\max}(G), \frac 1{q'} + \alpha_{\max}(G)\Bigr]
\qquad \text{for $1 \leq k
\leq k_i$},
\]
$\alpha_{V_{i,0}}(G) < 1/q'$, and the vertices in $V_{i,0}$ all have the
lowest degree present in $V_i$. We will do this in such a way that for all
$i=1\dots,q$ and all $j=1,\dots,k_{i}$,
\[
\frac 1{q'} \left(\sum_{i'=1}^{i-1} k_{i'} + j\right)\leq
\sum_{i'=1}^{i-1}\sum_{j'=1}^{k_{i'}}\alpha_{V_{i',j'}(G)}
+\sum_{j'=1}^j\alpha_{V_{i,j'}(G)}
\leq \frac 1{q'} \left(\sum_{i'=1}^{i-1} k_{i'} + j\right) +\alpha_{\max}(G).
\]
For example, we can do this greedily by sorting all the vertices in $V_i$
according to their degrees and then placing them into $V_{i,j}$ for
$j=1,2,\dots$ in decreasing order by degree until the lower bound in the
above inequality is satisfied.  Since the last vertex added contributed at
most $\alpha_{\max}(G)$ to the sum, we are guaranteed to have the upper bound
as well. When the total weight left in $V_i$ is too small to fill another
$V_{i,j}$, we are necessarily left with a remainder $V_{i,0}$ that has weight
less than $1/q'$ and contains only vertices with the lowest degree from
$V_i$.

Next, consider the remainder sets $V_{1,0}, \dots, V_{q,0}$, whose union we
denote by $V_0$. By construction, either $V_0$ is empty, in which case we do
nothing, or $\alpha_{V_0}$ lies between $k_0/q'-\alpha_{\max}(G)$ and
$k_0/q'$, where $k_0=q'-\sum_{i\geq 1} k_i$.  Proceeding again greedily (this
time ignoring the degrees), we decompose $V_0$ into $k_0$ sets
$V_{0,1},\dots,V_{0,k_0}$ with weights between $1/q' - \alpha_{\max}(G)$ and
$1/q' + \alpha_{\max}(G)$.  The sets $V_{i,j}$ with $0 \le i \le q$ and $j
\ge 1$ then form an equipartition $\cP'$ of $V(G)$ into $q'$ sets.

Define $S'$ to be the set of pairs $(u,v)\in [q']\times [q']$ for which we
can find an $(i,j)\in S$ such that $u$ is the label of a subclass of $V_i$
and $v$ is the label of a subclass of $V_j$.  In other words, $S'$ refines
the set $S$ from $[q]\times [q]$ to $[q'] \times [q']$, except that it does
not necessarily contain pairs $(u,v)$ for which $u$ or $v$ is in $V_0$ (since
the remainder sets used to form $V_0$ do not necessarily come from a single
part of $\cP$). Thus, we have
\begin{align}
\sum_{(i,j)\in S}|\beta_{ij}(G/\cP)|
\notag
&\leq
\sum_{(u,v)\in S'}|\beta_{uv}(G/\cP')|+
\frac{1}{\norm{G}_1} \sum_{\substack{x,y \in V(G) \\ x \in V_0
\text{ or } y \in V_0}} \alpha_x(G)\alpha_y(G)| \beta_{xy}(G)|
\notag
\\
&\leq
\sum_{(u,v)\in S'}|\beta_{uv}(G/\cP')|+
\frac{2}{\norm{G}_1}\sum_{x \in V_0} \alpha_x(G)\deg_x(G).
\label{tail-bd1}
\end{align}
It remains to prove that \eqref{tail-bd1} is at most $\eps$.

We begin with the second term. For each $i$, since the vertices in $V_{i,0}$
are among the lowest degree vertices of $V_i$, $\alpha_{V_{i,0}}(G) < 1/q'$,
and $\alpha_{V_i}(G) \geq \eta$, we have
\[
\sum_{x \in V_{i,0}} \alpha_x(G) \deg_x(G) \leq
\frac{\alpha_{V_{i,0}}(G)}{\alpha_{V_i}(G)} \sum_{x \in V_i} \alpha_x(G)
\deg_x(G)
\leq \frac{1}{q'\eta} \sum_{x \in V_i} \alpha_x(G) \deg_x(G).
\]
Summing over $i \in [q]$ and using $\sum_{x \in V(G)} \alpha_x(G) \deg_x(G) =
\norm{G}_1$, we see that the second term in \eqref{tail-bd1} is at most
\[
\frac{2}{q'\eta} \leq \frac{2}{q_0 \eta}= 2 \eta \leq \frac{\eps}{2}
\]
by \eqref{eq:e-eta}.

To bound the first term in \eqref{tail-bd1}, we decompose the sum
into a sum of those terms for which $|\beta_{u,v}(G/\PP')|$ is larger than
$K'(\eps/4) \alpha_u(G/\cP')\alpha_v(G/\cP')$ and a sum of those for which it is at most this large.  Since $G$ is
$(K',q')$-equipartition upper regular, we can bound the first sum by
$\eps/4$, while the second is clearly bounded
by the sum of $K'(\eps/4) \alpha_u(G/\cP')\alpha_v(G/\cP')$ over $(u,v) \in S'$.
Taking into account that $K'(\eps/4)\leq \frac{\eps}4 K(\eps)$, this proves that
\[\sum_{(u,v)\in S'}|\beta_{uv}(G/\cP')|
\leq \frac {\eps}4 +\frac{\eps K(\eps)}4
\sum_{(u,v) \in S'} \alpha_u(G/\cP')\alpha_v(G/\cP')
.
\]
Since
\[
\sum_{(u,v) \in S'} \alpha_u(G/\cP')\alpha_v(G/\cP')
\leq
\sum_{(i,j) \in S} \alpha_i(G/\cP)\alpha_j(G/\cP)
 \leq \frac 1{K(\eps)}
\]
by
\eqref{eq:S-K-bound}, we have shown that the first term in \eqref{tail-bd1} is bounded by
$\eps/2$, which completes our proof.
\end{proof}

\begin{proof}[Proof of Proposition~\ref{prop:E=>upp-reg}]
To show that $(G_n)_{n \ge 0}$ is uniformly upper regular by
Lemma~\ref{lem:equi-upper-reg}, it suffices to show that there is some $K
\colon (0,\infty) \to (0,\infty)$ and some sequence of integers $q_n \to
\infty$ such that $G_n$ is $(K, q_n)$-equipartition upper regular.

Equivalently, this amounts to showing that we can find some $q_n \to \infty$
so that $\alpha_{\max}(G_n)/\alpha_{G_n} \leq 1/(2q_n)$ and for every $\eps >
0$, there is some real $K >0$ so that for sufficiently large\footnote{This is
equivalent to the same claim for all $n$ since we can increase $K$ to account
for the first finitely many values of $n$.} $n$, we have
\begin{equation} \label{eq:equipartition-tail}
\max_{\substack{\phi \colon V(G_n) \to [q_n] \\ \text{equipartition}}}
\sum_{i,j \in [q_n]} |\beta_{ij}(G_n/\phi)|\one_{|\beta_{ij}(G_n/\phi)|
\geq K \alpha_i(G_n/\phi)\alpha_j(G_n/\phi)}
\leq \eps.
\end{equation}

For any weighted graph $G$ with $\alpha_{\max}(G)/\alpha_G \leq 1/(2q)$ and
equipartition $\phi \colon V(G) \to [q]$, we have
\[
\sum_{i,j \in [q]} |\beta_{ij}(G/\phi)|\one_{|\beta_{ij}(G/\phi)|
\geq K \alpha_i(G/\phi)\alpha_j(G/\phi)}
= - E_\phi(G,J)
\]
by the definition \eqref{Ephi(G,J)-def} of $E_\phi(G,J)$, where $J \in
\{-1,0,1\}^{q \times q}$ is given by
\[
  J_{ij} = \sign(\beta_{ij}(G/\phi))\one_{|\beta_{ij}(G/\phi)|
    \geq K \alpha_i(G/\phi)\alpha_j(G/\phi)}.
\]
Using $\alpha_i(G/\phi) \geq 1/q - \alpha_{\max}(G)/\alpha_G \geq 1/(2q)$, we
obtain
\begin{align*}
\sum_{i,j\in[q]} |J_{ij}|
&=\sum_{i,j \in [q]} \one_{|\beta_{ij}(G/\phi)| \geq K \alpha_i(G/\phi)\alpha_j(G/\phi)}
\\
&\leq \sum_{i,j\in[q]} \frac{|\beta_{ij}(G/\phi)|}{K \alpha_i(G/\phi)\alpha_j(G/\phi)}
\\
&\leq \frac{4q^2}{K} \sum_{i,j\in[q]} |\beta_{ij}(G/\phi)| \leq \frac{4q^2}{K}.
\end{align*}
It follows from the definition \eqref{Ea(G)-def} of $E_{\ba,\eps}(G,J)$ that
\begin{align}
\text{left side of }\eqref{eq:equipartition-tail}
&\leq \max_{\substack{\phi \colon V(G_n) \to [q_n] \\ \text{equipartition}}}
\max_{\substack{J \in \{-1,0,1\}^{q_n \times q_n} \\ \text{symmetric} \\
\sum_{i,j} |J_{ij}| \leq 4q_n^2/K}} ( - E_\phi(G_n,J)) \notag
\\
& =  \max_{\substack{J \in \{-1,0,1\}^{q_n \times q_n} \\ \text{symmetric} \\
\sum_{i,j} |J_{ij}| \leq 4q_n^2/K}} ( - E_{\bq_n,\alpha_{\max}(G_n)/\alpha_{G_n}}(G_n,J)) \label{eq:equi-tail2}
\end{align}
(here $\bq_n = (1/q_n, \dots, 1/q_n) \in \PD^{q_n}$, and we also write $\bq = (1/q, \dots, 1/q) \in \PD^q$ below).

Since the microcanonical ground state energies of $G_n$ converge to those of $W$,
we know that for every $q \in \N$, we can find $\eps_0(q) > 0$ and $n_0(q)$ so that
\[
  - E_{\bq,\eps}(G_n,J) \leq - \EE_\bq(W,J) + 1/q
\]
for all $0 < \eps < \eps_0(q)$, all $n > n_0(q)$, and every symmetric matrix
$J \in \{-1, 0, 1\}^{q \times q}$ (as there are only finitely many such $J$
for each $q$). Set $\eps_n = \alpha_{\max}(G_n) / \alpha_{G_n}$, so that
$\eps_n \to 0$ because there are no dominant nodes. It follows that we can
find a slowly growing sequence $q_n \to \infty$ so that (for sufficiently
large $n$) we have $\eps_n < \eps_0(q_n)$, $n > n_0(q_n)$, and $q_n\eps_n
\leq 1/2$, from which it follows that
\[
  - E_{\bq_n,\eps_n}(G_n, J) \leq - \EE_{\bq_n}(W, J) + 1/{q_n}
\]
for all symmetric matrices $J \in \{-1,0,1\}^{q_n \times q_n}$.  Hence for
sufficiently large $n$
\begin{align*}
  \eqref{eq:equi-tail2}
  &\leq
    \max_{\substack{J \in \{-1,0,1\}^{q_n \times q_n} \\ \text{symmetric} \\
        \sum_{i,j} |J_{ij}| \leq 4q_n^2/K}} \big(-\EE_{\bq_n}(W,J)
    + 1/q_n)
  \\
  & \leq \sup_{\substack{S \subseteq [0,1]^2 \\ \lambda(S) \leq 4/K}}
  \int_S \abs{W(x,y)} \, dx\,dy + 1/q_n.
\end{align*}
We can choose $K$ large enough that the first term in the final bound above
is at most $\eps/2$. Since $q_n \to \infty$, the second term is also at most
$\eps/2$ for sufficiently large $n$. This proves
\eqref{eq:equipartition-tail}, showing that $(G_n)_{n \ge 0}$ is uniformly
upper regular.
\end{proof}

\appendix

\section{Proof of the rearrangement inequality}
\label{app:rearr}

In this appendix, we prove that
\[
\E[W\,U]\leq \E[W^*\,U^*].
\]
when $W$ is an $L^p$ graphon and $U$ is an $L^{p'}$ graphon with $\frac
1p+\frac1{p'}=1$, with equality holding whenever $U$ and $W$ are aligned.

If $W,U\geq 0$, the proof of the rearrangement inequality is standard, and
can, e.g., be deduced from the following level-set representations
\[
U(x,y)=\int_0^\infty dt \,\one[U(x,y)>t]
\quad\text{and}\quad
W(x,y)=\int_0^\infty ds \,\one[U(x,y)>s].
\]
Indeed, with the help of this representation, we get
\[\begin{aligned}
\E[W\,U]
&=\E\Bigl[\int_0^\infty ds \,\one[W>s]\int_0^\infty dt\,\one[U>t]\Bigr]
\\
&=\int_0^\infty ds \,\int_0^\infty dt \,\Pr\Bigl[W>s\text{ and }U>t]\Bigr]
\\
&\leq\int_0^\infty ds \,\int_0^\infty dt \,\min\Bigl\{\Pr[W>s],\Pr[U>t]\Bigr\}
\\
&=\int_0^\infty ds \,\int_0^\infty dt \,\min\Bigl\{\Pr[W^*>s],\Pr[U^*>t]\Bigr\},
\end{aligned}
\]
where in the last step we used that $U$ and $U^*$ as well as $W$ and $W^*$
have the same distribution. Since the $U^*$ and $W^*$ have nested level sets,
the expression in the last line is equal to
\[
\begin{aligned}
\int_0^\infty &ds \,\int_0^\infty dt \,\Pr\Bigl[W^*>s\text{ and }U^*>t\Bigr]
\\
&=\E\Bigl[\int_0^\infty ds\,\one[W^*>s] \int_0^\infty dt\,\one[U^*>t]\Bigr]
\\
&= \E[W^*\,U^*].
\end{aligned}
\]
If $W$ and $U$ are aligned themselves, the only inequality in the above proof
becomes an equality, showing that $ \E[W\,U]= \E[W^*\,U^*]$ if $W$ and $U$
are aligned.

If $W$ and $U$ are bounded below, say by $W\geq -M$ and $U\geq -M$ for some
$M<\infty$, then we just use that $ \E[W^*\,U^*] - \E[W\,U] =
\E[(W+M)^*\,(U+M)^*] - \E[(W+M)\,(U+M)] $, which follows from linearity of
expectations and the fact that $\E[W] = \E[W^*] $ and $ \E[U]= \E[U^*] $.
Finally, to control the tails as $M\to\infty$, we bound
\[
\begin{aligned}
&\left|
\E[WU]-\E\Bigl[W\one[W\geq-M]\,U\one[U\geq -M]\Bigr]\right|
\\
&\qquad\leq
\E\Bigl[|WU|\one[|U|\geq M]\Bigr]
+
\E\Bigl[|WU|\one[|W|\geq M]\Bigr]
\\
&\qquad\leq
\|W\|_p\Bigl\|U\one[|U|\geq M]\Bigr\|_{p'}
+
\|U|\|_{p'}\Bigl\|W\one[|W|\geq M]\Bigr\|_p.
\end{aligned}
\]
Now the right side goes to zero as $M\to\infty$ by our assumption that $W\in
L^p$ and $U\in L^{p'}$.

\end{document}